\numberwithin{equation}{section}
\newtheorem{theorem}{Theorem}[section]
\newtheorem{corollary}[theorem]{Corollary}
\newtheorem{proposition}[theorem]{Proposition}
\newtheorem{conjecture}[theorem]{Conjecture}
\newtheorem{lemma}[theorem]{Lemma}
\theoremstyle{remark}
\newtheorem{remark}[theorem]{Remark}
\newcommand{\hyper}[5]{{_{#1}F_{#2}}\bigg[\genfrac{}{}{0pt}{}{#3}{#4};#5\bigg]}
\newcommand{\ph}{\hphantom{-}}
\newcommand{\Tau}{\textrm{T}}
\newcommand{\Int}{\int\limits}
\renewcommand{\Re}{\textup{Re}}
\renewcommand{\and}{\quad\text{and}\quad}
\DeclareMathOperator*{\Pf}{Pf}
\DeclareMathOperator*{\Pfb}{Pf\!}
\DeclareMathOperator{\sgn}{sgn}
\DeclareMathOperator{\CT}{CT}
\DeclareMathOperator{\eup}{e}
\DeclareMathOperator{\height}{ht}
\DeclareMathOperator{\iup}{i}
\DeclareMathOperator{\dup}{d\hspace{-1.5pt}}
\newcommand{\Real}{\mathbb R}
\newcommand{\Z}{\mathbb Z}
\newcommand{\Complex}{\mathbb C}
\newcommand{\Symm}{\mathfrak{S}}
\renewcommand{\L}{\mathscr{L}}
\renewcommand{\L}{L}
\newcommand{\abs}[1]{\lvert#1\rvert}
\newcommand{\ib}{i}
\newcommand{\jb}{j}
\newcommand{\R}{\Phi}
\newcommand{\bm}{m}
\newcommand{\bb}{b}
\newcommand{\fb}{a}
\newcommand{\eb}{e}
\newcommand{\eez}{\varnothing}
\newcommand{\een}{1\kern-2.75pt\mathrm{I}}
\newcommand{\eem}{I}
\renewcommand*\env@matrix[1][c]{\hskip -\arraycolsep
  \let\@ifnextchar\new@ifnextchar
  \array{*\c@MaxMatrixCols #1}}
\begin{document}

\title{Logarithmic~and~complex constant~term~identities}

\author{Tom Chappell}
\address{School of Mathematics and Physics,
The University of Queensland, Brisbane, QLD 4072, Australia}

\author{Alain Lascoux}
\address{CNRS, Institut Gaspard Monge,
Universit\'e Paris-Est, Marne-la-Vall\'ee, France}

\author{S.~Ole Warnaar}
\address{School of Mathematics and Physics,
The University of Queensland, Brisbane, QLD 4072, Australia}

\author{Wadim Zudilin}
\address{School of Mathematical and Physical Sciences,
The University of Newcastle, Call\-aghan, NSW 2308, Australia}

\thanks{Ole Warnaar and Wadim Zudilin are supported by the Australian Research Council}

\dedicatory{To Jon}

\subjclass[2010]{05A05, 05A10, 05A19, 33C20}

\begin{abstract}
In recent work on the representation theory of vertex algebras
related to the Virasoro minimal models $M(2,p)$,
Adamovi\'c and Milas discovered logarithmic analogues of (special cases
of) the famous Dyson and Morris constant term identities.
In this paper we show how the identities
of Adamovi\'c and Milas arise naturally by
differentiating as-yet-conjectural complex analogues of
the constant term identities of Dyson and Morris.
We also discuss the existence of complex and logarithmic
constant term identities for arbitrary root systems, and in particular
prove complex and logarithmic constant term identities
for the root system $\mathrm{G}_2$.

\noindent
\textbf{Keywords:}
Constant term identities; perfect matchings; Pfaffians; root systems;
Jon's birthday.
\end{abstract}

\maketitle

\setcounter{section}{-1}

\section{Jonathan Borwein}
Jon Borwein is known for his love of mathematical \textit{constants}.
We hope this paper will spark his interest in \textit{constant terms}.

\section{Constant term identities}

The study of constant term identities originated in Dyson's famous 1962
paper \emph{Statistical theory of the energy levels of complex systems}
\cite{Dyson62}. In this paper Dyson conjectured that for $a_1,\dots,a_n$
nonnegative integers
\begin{equation}\label{Eq_CT_Dyson}
\CT
\prod_{1\leq i\neq j\leq n} \Big(1-\frac{x_i}{x_j}\Big)^{a_i}
=\frac{(a_1+a_2+\cdots+a_n)!}{a_1!a_2!\cdots a_n!},
\end{equation}
where $\CT f(X)$ stands for the constant term of the Laurent polynomial
(or possibly Laurent series) $f(X)=f(x_1,\dots,x_n)$.
Dyson's conjecture was almost instantly proved by Gunson and
Wilson \cite{Gunson,Wilson62}. In a very elegant proof, published
several years later \cite{Good70}, Good showed that
\eqref{Eq_CT_Dyson} is a direct consequence of Lagrange interpolation
applied to $f(X)=1$.

In 1982 Macdonald generalised the equal-parameter case of Dyson's
ex-conjecture, i.e.,
\begin{equation}\label{Eq_CT_A}
\CT
\prod_{1\leq i\neq j\leq n} \Big(1-\frac{x_i}{x_j}\Big)^k
=\frac{(kn)!}{(k!)^n},
\end{equation}
to all irreducible, reduced root systems;
here \eqref{Eq_CT_A} corresponds to
the root system $\mathrm{A}_{n-1}$.
Adopting standard notation and terminology---see
\cite{Humphreys78}
or the next section---Macdonald conjectured that \cite{Macdonald82}
\begin{equation}\label{Eq_CT_Macdonald}
\CT
\prod_{\alpha\in\R} (1-\eup^{\alpha})^k=\prod_{i=1}^r \binom{k d_i}{k},
\end{equation}
where $\Phi$ is one of the root systems
$\mathrm{A}_{n-1},\mathrm{B}_n,\mathrm{C}_n,\mathrm{D}_n,\mathrm{E}_6,
\mathrm{E}_7,\mathrm{E}_8,\mathrm{F}_4,\mathrm{G}_2$ of rank $r$ and
$d_1,\dots,d_r$ are the degrees of its fundamental invariants.
For $k=1$ the Macdonald conjectures are an easy consequence of Weyl's
denominator formula
\[
\sum_{w\in W}\sgn(w) \eup^{w(\rho)-\rho}=\prod_{\alpha>0}
\big(1-\eup^{-\alpha}\big)
\]
(where $W$ is the Weyl group of $\Phi$ and $\rho$ the Weyl vector), and
for $\mathrm{B}_n,\mathrm{C}_n,\mathrm{D}_n$ but $k$ general
they follow from the Selberg integral.
The first uniform proof of \eqref{Eq_CT_Macdonald}---based on hypergeometric
shift operators---was given by Opdam in 1989 \cite{Opdam89}.

In his PhD thesis \cite{Morris82} Morris used the Selberg integral
to prove a generalisation of \eqref{Eq_CT_A},
now commonly referred to as the Morris or Macdonald--Morris
constant term identity:
\begin{multline}\label{Eq_CT_Morris}
\CT\bigg[\,\prod_{i=1}^n (1-x_i)^a \Big(1-\frac{1}{x_i}\Big)^b
\prod_{1\leq i\neq j\leq n} \Big(1-\frac{x_i}{x_j}\Big)^k\bigg] \\
=\prod_{i=0}^{n-1} \frac{(a+b+ik)!((i+1)k)!}{(a+ik)! (b+ik)! k!},
\end{multline}
where $a$ and $b$ are arbitrary nonnegative integers.

\medskip

In their recent representation-theoretic work on $W$-algebra extensions
of the $M(2,p)$ minimal models of conformal field theory \cite{AM10,AM11},
Adamovi\'c and Milas discovered a novel type of constant term identities,
which they termed \emph{logarithmic constant term identities}. Before stating
the results of Adamovi\'c and Milas, some more notation is needed.

Let $(a)_m=a(a+1)\cdots(a+m-1)$ denote the usual Pochhammer symbol
or rising factorial, and let $u$ be either a formal or complex variable.
Then the (generalised) binomial coefficient $\binom{u}{m}$ is defined as
\[
\binom{u}{m}=(-1)^m \frac{(-u)_m}{m!}.
\]
We now interpret $(1-x)^u$ and $\log(1-x)$ as the (formal) power series
\begin{equation}\label{Eq_binomial_thm}
(1-x)^u=\sum_{m=0}^{\infty} (-x)^m \binom{u}{m}
\end{equation}
and
\[
\log(1-x)
=-\sum_{m=1}^{\infty} \frac{x^m}{m}
=\frac{\dup\;}{\dup u}(1-x)^u\Big|_{u=0}.
\]
Finally, for $X=(x_1,\dots,x_n)$ we define the Vandermonde product
\[
\Delta(X)=\prod_{1\leq i<j\leq n}(x_i-x_j).
\]

One of the discoveries of Adamovi\'c and Milas is the following
beautiful logarithmic analogue of the equal-parameter case
\eqref{Eq_CT_A} of Dyson's identity.
\begin{conjecture}[{\!\!\cite[Conjecture A.12]{AM10}}]\label{Conj_log_CT_AM}
For $n$ an odd positive integer and $k$ a nonnegative integer
define $m:=(n-1)/2$ and $K:=2k+1$. Then
\begin{equation}\label{Eq_log_CT_AM}
\CT\bigg[\,\Delta(X)
\prod_{i=1}^n x_i^{-m} \prod_{i=1}^m \log\Big(1-\frac{x_{2i}}{x_{2i-1}}\Big)
\prod_{1\leq i\neq j\leq n} \Big(1-\frac{x_i}{x_j}\Big)^k\bigg]
=\frac{(nK)!!}{n!!(K!!)^n}.
\end{equation}
\end{conjecture}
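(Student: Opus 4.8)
The plan is to absorb the Vandermonde prefactor into the Dyson kernel, and then to generate the logarithms by differentiating a one-parameter-per-pair deformation of a complex (continuous-exponent) Dyson constant term.

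\medskip

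First I would rewrite the prefactor. Since $m=(n-1)/2$, a direct computation gives
\[
\Delta(X)\prod_{i=1}^n x_i^{-m}
=\prod_{1\le i<j\le n}\Big(\sqrt{x_i/x_j}-\sqrt{x_j/x_i}\Big),
\]
because $\prod_{i<j}\sqrt{x_ix_j}=\prod_i x_i^{(n-1)/2}$. Combining this with the elementary identity
\[
\Big(1-\frac{x_i}{x_j}\Big)\Big(1-\frac{x_j}{x_i}\Big)
=-\Big(\sqrt{x_i/x_j}-\sqrt{x_j/x_i}\Big)^2,
\]
applied to each unordered pair in the Dyson product, collapses the entire non-logarithmic factor to
\[
\Delta(X)\prod_{i=1}^n x_i^{-m}\prod_{1\le i\neq j\le n}\Big(1-\frac{x_i}{x_j}\Big)^k
=(-1)^{k\binom n2}\prod_{1\le i<j\le n}\Big(\sqrt{x_i/x_j}-\sqrt{x_j/x_i}\Big)^{K},
\qquad K=2k+1.
\]
This explains both the appearance of $K=2k+1$ and of the double factorials: the kernel is an \emph{odd} power of the ``square-root Vandermonde''. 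I would also record the sanity check that, \emph{without} the logarithmic factors, this kernel is antisymmetric in $X$, so its constant term vanishes identically; the logarithms are exactly what is needed to produce a nonzero answer.

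\medskip

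Next I would manufacture the logarithms by differentiation. Using $\log(1-x)=\tfrac{\dup}{\dup u}(1-x)^u\big|_{u=0}$, I set
\[
G(\gamma_1,\dots,\gamma_m)
=\CT\bigg[\Delta(X)\prod_{i=1}^n x_i^{-m}
\prod_{i=1}^m\Big(1-\frac{x_{2i}}{x_{2i-1}}\Big)^{\gamma_i}
\prod_{1\le i\neq j\le n}\Big(1-\frac{x_i}{x_j}\Big)^k\bigg],
\]
so that the left-hand side of \eqref{Eq_log_CT_AM} equals $\partial_{\gamma_1}\cdots\partial_{\gamma_m}G\big|_{\gamma=0}$, i.e.\ the coefficient of $\gamma_1\cdots\gamma_m$ in $G$. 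The function $G$ is precisely a complex Dyson constant term of the type this paper proposes, and the whole strategy rests on having a closed-form evaluation of $G$ as an analytic function of $\gamma_1,\dots,\gamma_m$ near the origin.

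\medskip

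To evaluate $G$, or at least its multilinear part, I would expand each $\log(1-x_{2i}/x_{2i-1})=-\sum_{l\ge1}l^{-1}(x_{2i}/x_{2i-1})^{l}$ and extract the constant term. For $k=0$ (so $K=1$) the square-root Vandermonde is, up to sign, a single alternant $\det(x_i^{\,j-1-m})_{i,j=1}^n$; combining this determinant with the pairwise logarithmic factors and invoking a de Bruijn-type summation converts the constant term into a Pfaffian. Since $n$ is odd, one variable is left unpaired and is forced to carry the central exponent $0$, so the correct object is a \emph{bordered} Pfaffian $\Pfb$ rather than $\Pf$; the signed matching computation already returns $1=(n\cdot1)!!/(n!!\,(1!!)^n)$ for every odd $n$, confirming the mechanism. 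The main obstacle is the general case $k\ge1$: here $\prod_{i<j}(\sqrt{x_i/x_j}-\sqrt{x_j/x_i})^{K}$ is a \emph{power} of an alternant rather than a single determinant, so the clean de Bruijn reduction breaks down, and one must either establish the conjectural complex Dyson evaluation of $G$ directly or prove a Pfaffian/perfect-matching identity for these higher powers and then evaluate the resulting Pfaffian. Showing that this closed form collapses to $(nK)!!/(n!!(K!!)^n)$---most plausibly by induction on $n$ together with the known equal-parameter Macdonald evaluation \eqref{Eq_CT_A}---is the step I expect to be hardest.
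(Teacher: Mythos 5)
Your opening reduction is correct, and it is in fact equivalent to the paper's own rewriting \eqref{Eq_rew}: your ``square-root Vandermonde'' $\prod_{i<j}\big(\sqrt{x_i/x_j}-\sqrt{x_j/x_i}\big)$ coincides, up to sign, with $\prod_{i<j}\big(1-(x_i/x_j)^{\tau_{ij}}\big)$ built from the signatures of Section~\ref{Sec_tau}, and it correctly explains the appearance of $K=2k+1$ and the vanishing sanity check. The genuine gap is in your deformation step. Your $G(\gamma_1,\dots,\gamma_m)$, with independent exponents on the $m$ designated pairs and the Dyson exponent frozen at the integer $k$, is \emph{not} ``precisely a complex Dyson constant term of the type this paper proposes'': in the complex Dyson identity \eqref{Eq_complex_Dyson} a single complex $u$ replaces the exponent on \emph{every} one of the $\binom{n}{2}$ signature-adjusted factors, and it is exactly this symmetric one-parameter deformation that (conjecturally) admits the closed product form with the factor $x^mP_n(x^2)$, $x=\cos(\tfrac12\pi u)$, verified for $n=3$ by creative telescoping and Dixon's sum. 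Your asymmetric multi-parameter $G$ has no conjectured evaluation at all --- its $\gamma$-multilinear part \emph{is} the unknown constant term --- so extracting the coefficient of $\gamma_1\cdots\gamma_m$ is circular: for $k\geq 1$ the proposal restates the conjecture rather than reducing it to a plausible closed form, as you partly concede.

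It is worth seeing what the paper's (conditional) route buys that yours gives up. Differentiating the single-$u$ identity $m$ times at $u=K$ does not directly produce the canonical pairing $(1,2),(3,4),\dots$; it produces the sum over \emph{all} choices of $m$ logarithmic factors, of which only those with $2m$ distinct indices survive, because at odd $u=K$ the remaining kernel is skew-symmetric. The resulting sum over perfect matchings on $[n+1]$ is then collapsed to $n$ times the canonical term via the Stanton--Stembridge trick and the Pfaffian evaluation $(-1)^{\binom{m}{2}}n$ of Proposition~\ref{Prop_Pfaffian} (Corollary~\ref{Cor_CT}), while on the right-hand side the factor $x^mP_n(x^2)$ vanishes to order exactly $m$ at $u=K$, with $P_n(0)=1/(n-2)!!$ supplying the constant. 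Your mixed partial isolates one pairing and so avoids this matching combinatorics --- but precisely thereby destroys the symmetry that makes a product-form evaluation credible. Note also that the paper itself proves the statement only conditionally (it is the $a=b=0$ case of Theorem~\ref{Thm_log_CT_Morris}, resting on Conjecture~\ref{Conj_CT_An_complex}), so the honest benchmark is that conditional derivation, which your plan does not recover for $k\geq1$. Your $k=0$ de Bruijn/bordered-Pfaffian remark is the one genuinely promising element not in the paper --- the fixed pairing does factorize the constant term of $\det\big(x_i^{j-1-m}\big)$ into a bordered Pfaffian with entries of the form $\CT\big[(x^{a}y^{b}-x^{b}y^{a})\log(1-y/x)\big]$ --- but as written it is an unverified sketch, and by your own admission it does not extend to $k\geq1$, where the entire difficulty lies.
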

We remark that the kernel on the left is a Laurent series in $X$ of
(total) degree $0$. Moreover, in the absence of the term
$\prod_{i=1}^m \log(1-x_{2i}/x_{2i-1})$ the kernel is
a skew-symmetric Laurent polynomial which therefore has a
vanishing constant term.
Using identities for harmonic numbers, Adamovi\'c and Milas proved
\eqref{Eq_log_CT_AM} for $n=3$, see \cite[Corollary 11.11]{AM10}.

Another result of Adamovi\'c and Milas, first conjectured in
\cite[Conjecture 10.3]{AM10} (and proved for $n=3$ in
(the second) Theorem 1.1 of that paper, see page 3925)
and subsequently proved in \cite[Theorem 1.4]{AM11},
is the following Morris-type logarithmic constant term identity.
\begin{theorem}\label{Thm_log_CT_AM2}
With the same notation as above,
\begin{multline}\label{Eq_CT_AM}
\CT\bigg[\Delta(X)
\prod_{i=1}^n x_i^{2-(k+1)(n+1)} (1-x_i)^a \prod_{i=1}^m
\log\Big(1-\frac{x_{2i}}{x_{2i-1}}\Big)
\prod_{1\leq i\neq j\leq n} \Big(1-\frac{x_i}{x_j}\Big)^k\bigg] \\
=c_{nk}\prod_{i=0}^{n-1} \binom{a+Ki/2}{(m+1)K-1},
\end{multline}
where $a$ is an indeterminate, $c_{nk}$ a nonzero constant, and
\begin{equation}\label{Eq_c3k}
c_{3,k}=
\frac{(3K)!(k!)^3}{6(3k+1)!(K!)^3}
\binom{3K-1}{2K-1}^{-1}\binom{5K/2-1}{2K-1}^{-1}.
\end{equation}
\end{theorem}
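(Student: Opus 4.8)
The plan is to obtain \eqref{Eq_CT_AM} by differentiating a complex (non-integer exponent) Morris constant term identity, the logarithms being produced through the relation $\log(1-x)=\frac{\dup\;}{\dup u}(1-x)^u\big|_{u=0}$. The first step is to absorb the Vandermonde and the logarithms into shifted exponents. Writing $x_{2i-1}-x_{2i}=x_{2i-1}\bigl(1-x_{2i}/x_{2i-1}\bigr)$ on the $m$ matched indices and $x_i-x_j=-x_j\bigl(1-x_i/x_j\bigr)$ on the remaining factors, the product $\Delta(X)\prod_{1\le i\ne j\le n}(1-x_i/x_j)^k$ becomes, up to a sign and an explicit monomial in $X$, a product of terms $(1-x_i/x_j)^{k+1}(1-x_j/x_i)^k$, one per unordered pair, with the raised exponent placed — on the $m$ matched pairs — exactly along the direction $x_{2i}/x_{2i-1}$. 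On those pairs the extra logarithm then combines as $(1-x_{2i}/x_{2i-1})^{k+1}\log(1-x_{2i}/x_{2i-1})=\frac{\partial}{\partial u_i}(1-x_{2i}/x_{2i-1})^{u_i}\big|_{u_i=k+1}$. Consequently the left-hand side of \eqref{Eq_CT_AM} equals $\pm\bigl[\prod_{i=1}^m\partial_{u_i}\bigr]_{u_i=k+1}\CT H(u,X)$, where $H$ is a Morris-type kernel carrying $\prod_i x_i^{q_i}(1-x_i)^a$ and asymmetric Dyson exponents, namely $k$ together with $u_i$ on the matched pairs and $k$ together with $k+1$ elsewhere. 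The power $q_i$ collects the exponent $x_i^{2-(k+1)(n+1)}$ and the monomial split off from $\Delta(X)$, and is precisely the choice that makes the constant-term extraction the well-defined Morris coefficient; it corresponds to a specific specialisation of the second Morris exponent $b$ in \eqref{Eq_CT_Morris}.

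The second step is to evaluate $\CT H(u,X)$ explicitly as a function of $u_1,\dots,u_m$, $a$ and $k$. Passing to the torus $x_j=\eup^{2\iup\theta_j}$ turns the constant term into a Selberg-type integral in which each symmetric factor $(1-x_i/x_j)^k(1-x_j/x_i)^k$ becomes a modulus power with half-integer parameter $K/2=k+\tfrac12$, while the $+1$ shifts and the $u_i$ survive only as fractional monomial insertions. The \emph{complex} Selberg integral (a theorem of Aomoto and of Dotsenko and Fateev, in contrast to the still-conjectural complex Dyson identity) then evaluates $\CT H$ as an explicit product of Gamma functions. This is the complex Morris analogue advertised in the introduction, and it furnishes exactly the analytic continuation in the exponent parameters that legitimises differentiating at $u_i=k+1$. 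The half-integer shifts $Ki/2$ in the binomials on the right of \eqref{Eq_CT_AM} are the direct imprint of this half-integer Selberg parameter, just as the ordinary product \eqref{Eq_CT_Morris} is the imprint of the real Selberg integral.

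The third step is to apply $\prod_{i=1}^m\partial_{u_i}$ at $u_i=k+1$ and simplify. Since the Gamma-function evaluation couples all variables, the $m$ logarithmic derivatives do not factor; instead they produce a sum of digamma contributions indexed by the ways of distributing the derivatives over the matched pairs, and this sum organises itself into a Pfaffian of an antisymmetric array whose expansion enumerates the relevant perfect matchings. Evaluating this Pfaffian yields the factor $\prod_{i=0}^{n-1}\binom{a+Ki/2}{(m+1)K-1}$, carrying the entire $a$-dependence, multiplied by an $a$-independent scalar $c_{nk}$. That the $a$-dependence collapses to this single binomial product — and in particular that $c_{nk}$ does not depend on $a$ — follows because the $a$-dependent Gamma factors of the Selberg value are untouched by the $\partial_{u_i}$, which act only on the $u$-dependent factors.

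The main obstacle is the closed-form evaluation of this Pfaffian, and hence of $c_{nk}$. For general $n$ I expect only to establish that $c_{nk}$ is a well-defined nonzero constant, its nonvanishing following from the fact that the governing Pfaffian reduces to a nonzero product over the matching, without extracting a product formula. For $n=3$, where $m=1$, a single derivative is taken, the Pfaffian degenerates to a scalar, and the computation reduces to elementary manipulation of factorials together with one digamma difference; carrying this out and normalising against \eqref{Eq_CT_Morris} pins down the explicit value of $c_{3,k}$ recorded in \eqref{Eq_c3k}. The genuine difficulties are therefore concentrated in the Selberg evaluation of $\CT H$ and in recognising the differentiated right-hand side as a Pfaffian; once these are in place, \eqref{Eq_CT_AM} follows.
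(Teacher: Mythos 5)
There is a genuine gap, and it sits exactly at your second step. You assume that $\CT H(u,X)$ --- the constant term of the deformed kernel with non-integer, \emph{asymmetric} per-pair exponents ($u_i$ against $k$ on the matched pairs, $k+1$ against $k$ elsewhere) --- is evaluated as a product of Gamma functions by a known ``complex Selberg integral''. No such theorem applies. Once the exponents on the two directions of a pair differ by a non-integer, the kernel on the torus is not a modulus power: $(1-x_i/x_j)^{u}(1-x_j/x_i)^{k}$ carries an uncancelled phase, so the constant term is not a Selberg/Morris-type integral, and the Aomoto and Dotsenko--Fateev ``complex Selberg'' evaluations (which concern two-dimensional integrals over $\Complex$) say nothing about it. This is precisely the difficulty the present paper is organised around: the complex Morris identity, Conjecture~\ref{Conj_CT_An_complex}, is \emph{open} for $n\geq 5$, is proved for $n=3$ only by creative telescoping (Proposition~\ref{Prop_CT_A2_complex}), and --- tellingly --- its right-hand side is \emph{not} a pure Gamma product: it carries the factor $x^m P_n(x^2)$ with $x=\cos(\tfrac12\pi u)$ and $P_n$ an unknown polynomial ($P_5,P_7$ are only conjectural). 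If a Selberg-type evaluation of the kind you invoke existed, no such trigonometric-polynomial factor could appear. So the analytic input you treat as known is exactly the conjectural core, and your argument cannot close for any $n\geq 5$; even your fallback claim that $c_{nk}\neq 0$ for general $n$ rests on the same unavailable evaluation.

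There is also a structural mismatch in how the differentiation produces the logarithmic identity. In the paper's (conditional) derivation of the related identity \eqref{Eq_log_CT_Morris}, a \emph{single} complex parameter $u$ is differentiated $m$ times at $u=K:=2k+1$. On the evaluation side nothing like a Pfaffian of digamma terms appears: since $x(K)=0$, the factor $x^mP_n(x^2)$ kills every derivative except the top one, and only $p_n^{(m)}(K)\,r_n(K)$ survives. The Pfaffian (Proposition~\ref{Prop_Pfaffian}, applied via Corollary~\ref{Cor_CT}) enters on the \emph{constant-term} side, where skew-symmetry of the kernel at odd exponent forces the surviving log-placements to form perfect matchings, and the Pfaffian identity collapses the matching sum to $n$ times a single term. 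Your proposal inverts this, placing independent parameters $u_i$ on the matched pairs and asserting that the resulting digamma sums ``organise into a Pfaffian'' evaluating to $\prod_{i=0}^{n-1}\binom{a+Ki/2}{(m+1)K-1}$; nothing supports that assertion, and with independent $u_i$ the skew-symmetry mechanism that makes the matching structure emerge is lost. Finally, be aware that Theorem~\ref{Thm_log_CT_AM2} is not proved in this paper at all: it is quoted from Adamovi\'c--Milas \cite[Theorem 1.4]{AM11}, where it is established by entirely different, recursion-based methods (cf.\ the use of \cite[Theorem 7.1]{AM11} in Section~\ref{Sec_New}); the differentiation-of-complex-identities route developed here yields the variant Theorem~\ref{Thm_log_CT_Morris}, and even that only conditionally on \eqref{Eq_complex_Morris}.
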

As we shall see later, the above can be generalised to include
an additional free parameter resulting in a
logarithmic constant term identity more closely resembling
Morris' identity, see \eqref{Eq_log_CT_Morris} below.

\medskip

The work of Adamovi\'c and Milas raises the following obvious questions:
\begin{enumerate}
\item Can any of the methods of proof of the classical
constant term identities, see e.g.,
\cite{BG85,Cherednik95,GLX08,GX06,Good70,Gunson,Habsieger86,Kadell98,Kaneko02,KN11,Opdam89,Sills06,Sills08,SZ06,Wilson62,Zeilberger82,Zeilberger87,Zeilberger90,ZB90},
be utilised to prove the logarithmic counterparts?
\item Do more of Macdonald's identities \eqref{Eq_CT_Macdonald} admit
logarithmic analogues?
\item All of the classical constant term identities have
$q$-analogues \cite{Habsieger88,Kadell88,Macdonald82,Morris82}.
Do such $q$-analogues also exist in the logarithmic setting?
\end{enumerate}

As to the first and third questions, we can be disappointingly short;
we have not been able to successfully apply any of the known methods of
proof of constant term identities to also prove
Conjecture~\ref{Conj_log_CT_AM}, and
attempts to find $q$-analogues have been equally unsuccessful.
(In fact, we now believe $q$-analogues do not exist.)

As to the second question, we have found a very appealing
explanation---itself based on further conjectures!---of the logarithmic
constant term identities of Adamo\-vi\'c and Milas.
They arise by differentiating a complex version of Morris' constant term
identity. Although such complex constant term identities are conjectured
to exist for other root systems as well---this is actually proved in the
case $\mathrm{G}_2$---it seems that only for $\mathrm{A}_{2n}$ and
$\mathrm{G}_2$ these complex identities imply elegant logarithmic
identities.

\medskip

The remainder of this paper is organised as follows. In the next section
we introduce some standard notation related to root systems.
Then, in Section~\ref{Sec_tau}, we study certain sign functions
and prove a related Pfaffian identity needed subsequently.
In Section~\ref{Sec_Complex_Morris},
we conjecture a complex analogue of the
Morris constant term identity~\ref{Eq_CT_Morris} for $n$ odd, and prove
this for $n=3$ using Zeilberger's method of creative telescoping
\cite{AZ06,PWZ96}.
In Section~\ref{Sec_CT_Log_Morris} we show that the complex Morris
identity implies the
following logarithmic analogue of \eqref{Eq_CT_Morris}.
\begin{theorem}[\textbf{Logarithmic Morris constant term identity}]\label{Thm_log_CT_Morris}
With the same notation as in Conjecture~\ref{Conj_log_CT_AM} and
conditional on the complex Morris constant term identity
\eqref{Eq_complex_Morris} to hold, we have
\begin{multline}\label{Eq_log_CT_Morris}
\CT\bigg[
\Delta(X)
\prod_{i=1}^n x_i^{-m} (1-x_i)^a\Big(1-\frac{1}{x_i}\Big)^b
\prod_{i=1}^m \log\Big(1-\frac{x_{2i}}{x_{2i-1}}\Big)
\prod_{1\leq i\neq j\leq n} \Big(1-\frac{x_i}{x_j}\Big)^k\bigg] \\
=\frac{1}{n!!}
\prod_{i=0}^{n-1}
\frac{(2a+2b+iK)!!((i+1)K)!!}{(2a+iK)!!(2b+iK)!!K!!},
\end{multline}
where $a,b$ are nonnegative integers.
\end{theorem}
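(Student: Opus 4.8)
The plan is to derive \eqref{Eq_log_CT_Morris} from the complex Morris identity \eqref{Eq_complex_Morris} by differentiation and specialisation, as announced in the introduction. The engine is the elementary identity $\log(1-x)=\frac{\dup\;}{\dup u}(1-x)^u\big|_{u=0}$ recorded above: every logarithm in the kernel of \eqref{Eq_log_CT_Morris} is to be generated by differentiating a complex power in \eqref{Eq_complex_Morris} with respect to an exponent parameter attached to one of the distinguished pairs $(x_{2i-1},x_{2i})$, $1\le i\le m$, and then sending that parameter to its special value. Since the left-hand side carries $m=(n-1)/2$ logarithms---one per pair, the odd-indexed variable $x_n$ being left free---I would take the corresponding $m$-fold mixed derivative of both sides of \eqref{Eq_complex_Morris}.

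The first substantive step is to match the two kernels. Writing $K=2k+1$, the symmetric factor of the complex kernel carries the half-integer exponent $K/2=k+\tfrac12$ on each unordered pair, and I would factor out the integer part,
\[
\prod_{1\le i<j\le n}\Big(1-\frac{x_i}{x_j}\Big)^{K/2}\Big(1-\frac{x_j}{x_i}\Big)^{K/2}
=\prod_{1\le i\ne j\le n}\Big(1-\frac{x_i}{x_j}\Big)^{k}
\cdot\prod_{1\le i<j\le n}\Big[\Big(1-\frac{x_i}{x_j}\Big)\Big(1-\frac{x_j}{x_i}\Big)\Big]^{1/2}.
\]
The first product on the right is the Dyson factor of \eqref{Eq_log_CT_Morris}, while the second collapses to the Vandermonde data: since $(1-x_i/x_j)(1-x_j/x_i)=-(x_i-x_j)^2/(x_ix_j)$, $\prod_{i<j}(x_i-x_j)=\Delta(X)$ and $\prod_{i<j}(x_ix_j)^{1/2}=\prod_{i=1}^n x_i^{m}$, one finds
\[
\prod_{1\le i<j\le n}\Big[\Big(1-\frac{x_i}{x_j}\Big)\Big(1-\frac{x_j}{x_i}\Big)\Big]^{1/2}
=\iup^{\binom n2}\,\Delta(X)\prod_{i=1}^n x_i^{-m}
\]
for a suitable choice of branches. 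Together with the boundary factors $(1-x_i)^a(1-1/x_i)^b$, which are common to both identities, this recovers the entire real kernel of \eqref{Eq_log_CT_Morris}, the $m$-fold differentiation supplying the product $\prod_{i=1}^m\log(1-x_{2i}/x_{2i-1})$.

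The delicate point---and where Section~\ref{Sec_tau} is needed---is that the square roots are multivalued, so the branches must be fixed coherently across all $\binom n2$ pairs; this is precisely the combinatorial data carried by the sign function $\tau$. I would use the Pfaffian evaluation of that section to sum the contributions over the admissible branch assignments into a single real constant, absorbing the phase $\iup^{\binom n2}$ and the signs from $-(x_i-x_j)^2/(x_ix_j)$, and to verify that the $m$-fold derivative selects exactly $\log(1-x_{2i}/x_{2i-1})$ on each distinguished pair rather than a symmetrised combination or a spurious monomial logarithm. I expect this sign- and branch-bookkeeping to be the principal obstacle; once it is settled, the left-hand side of \eqref{Eq_complex_Morris} has literally been turned into the left-hand side of \eqref{Eq_log_CT_Morris}.

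It remains to process the right-hand side. I would apply the same $m$-fold derivative to the product formula of \eqref{Eq_complex_Morris} at the special parameter value and simplify. The half-integer exponent $K/2$ converts the ambient Pochhammer symbols into double factorials, which is the provenance of the symbols $(2a+2b+iK)!!$, $((i+1)K)!!$, $(2a+iK)!!$, $(2b+iK)!!$ and $K!!$ in \eqref{Eq_log_CT_Morris}; the digamma terms thrown off by the differentiation must be shown to cancel, and the residual constant from the kernel collapse must combine with the normalisation $1/n!!$ to give the stated closed form. A useful check on the bookkeeping is to specialise to $n=3$ and compare with the evaluation \eqref{Eq_c3k} of Adamovi\'c and Milas and with Theorem~\ref{Thm_log_CT_AM2}.
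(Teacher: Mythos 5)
Your high-level instinct is right---the theorem is obtained by differentiating the complex Morris identity $m$ times and specialising $u=K$---but two of your central steps are not what the hypothesis supports, and both would fail as written. First, the kernel matching: the left-hand side of \eqref{Eq_complex_Morris} is \emph{not} a symmetric product carrying the exponent $K/2$ on both orientations of each pair. It is $\prod_{1\leq i<j\leq n}\big(1-(x_i/x_j)^{\tau_{ij}}\big)^u$, i.e.\ one single-valued binomial series per \emph{unordered} pair, with the orientation dictated by the signatures $\tau_{ij}$ of Section~\ref{Sec_tau}; the zero-row-sum property (Lemma~\ref{Lem_tau}) is exactly what makes the integer rewriting \eqref{Eq_rew} possible for odd $n$, and at $u=K$ one has the Laurent-polynomial identity $\prod_{i<j}\big(1-(x_i/x_j)^{\tau_{ij}}\big)=(-1)^{\binom{m}{2}}\Delta(X)\prod_i x_i^{-m}$, involving signs only---no phases $\iup^{\binom{n}{2}}$, no branches. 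Your factor $\prod_{i<j}\big[(1-x_i/x_j)(1-x_j/x_i)\big]^{1/2}$ is not a Laurent series at all (the two fractional binomial expansions run in opposite variables, and their product does not converge formally), so $\CT$ of it, and a ``sum over admissible branch assignments,'' are undefined operations. Moreover \eqref{Eq_complex_Morris} has a single global parameter $u$, not one parameter per distinguished pair $(x_{2i-1},x_{2i})$, so the mixed $m$-fold derivative you propose is simply not available from the stated hypothesis.

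Because only a global $u$ exists, the $m$-th derivative at $u=K$ produces a sum over \emph{all} choices of $m$ of the $\binom{n}{2}$ pairs, and the real work is to reduce this sum to the single canonical log product. The paper does this in two moves you are missing: (i) at $u=K$ the log-free kernel is skew-symmetric, so (by the Stanton--Stembridge trick) every term whose logarithms involve fewer than $n-1$ distinct variables has vanishing constant term; the survivors are indexed by perfect matchings on $[n+1]$, and Corollary~\ref{Cor_CT}---powered by Proposition~\ref{Prop_Pfaffian}, giving $\Pf=(-1)^{\binom{m}{2}}n$---collates them into $n$ copies of one term (a final $X\mapsto X^{-1}$, $a\leftrightarrow b$ swap fixes the orientation of the logs). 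So the Pfaffian's role is combinatorial collation of log placements, not branch bookkeeping. (ii) On the right-hand side there are no digamma terms to cancel: since $x(K)=\cos(\tfrac{1}{2}\pi K)=0$, the prefactor $x^mP_n(x^2)$ vanishes to order exactly $m$ at $u=K$, forcing all $m$ derivatives onto it, whence $R_n^{(m)}(K)=p_n^{(m)}(K)\,r_n(K)$; this is precisely where $P_n(0)=1/(n-2)!!$ enters, and \eqref{Eq_Legendre} and \eqref{Eq_phfrac} then turn $r_n(K)$ into the double factorials of \eqref{Eq_log_CT_Morris}. Without the vanishing-plus-matching mechanism and the order-$m$ zero on the right, your outline cannot be closed.
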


In Section~\ref{Sec_G2} we prove complex as well as logarithmic analogues of
\eqref{Eq_CT_Macdonald} for the root system $\mathrm{G}_2$,
and finally, in Section~\ref{Sec_Other} we briefly discuss the classical
roots systems $\mathrm{B}_n$, $\mathrm{C}_n$ and $\mathrm{D}_n$.

\section{Preliminaries on root systems and constant terms}\label{Sec_2}

In the final two sections of this paper we consider root systems of types
other than $\mathrm{A}$, and below we briefly recall some standard
notation concerning root systems and constant term identities. For more
details we refer the reader to \cite{Humphreys78,Macdonald82}.

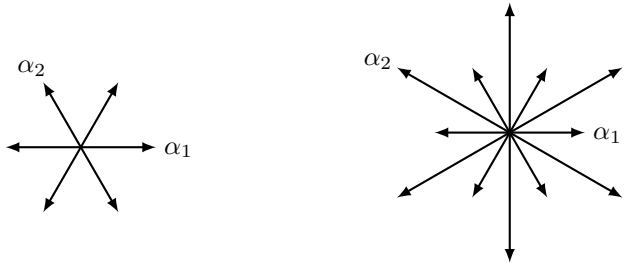
\begin{figure}[thb]
\begin{minipage}{0.4\linewidth}
\begin{center}
\begin{tikzpicture}[scale=0.5,baseline=0cm]
\draw[thick,-latex] (0,0) -- (2,0);
\draw[thick,latex-] (-2,0) -- (0,0);
\draw[thick,latex-latex] (-1,-1.73205)--(1,1.73205);
\draw[thick,-latex] (0,0)--(-1,1.73205);
\draw[thick,latex-] (1,-1.73205)--(0,0);
\draw (2.6,-0.07) node {$\alpha_1$};
\draw (-1.3,2.1) node {$\alpha_2$};
\end{tikzpicture}
\end{center}
\end{minipage}
\begin{minipage}{0.4\linewidth}
\begin{center}
\begin{tikzpicture}[scale=0.5,baseline=0cm]
\draw[thick,-latex] (0,0) -- (2,0);
\draw[thick,latex-] (-2,0) -- (0,0);
\draw[thick,latex-latex] (-1,-1.73205)--(1,1.73205);
\draw[thick,latex-latex] (1,-1.73205)--(-1,1.73205);
\draw[thick,-latex] (0,0)--(-3,1.73205);
\draw[thick,latex-] (3,-1.73205)--(0,0);
\draw[thick,latex-latex] (3,1.73205)--(-3,-1.73205);
\draw[thick,latex-latex] (0,3.4641)--(0,-3.4641);
\draw (2.6,-0.07) node {$\alpha_1$};
\draw (-3.5,1.9) node {$\alpha_2$};
\end{tikzpicture}
\end{center}
\end{minipage}
\caption{The root systems $\mathrm{A}_2$ (left) and $\mathrm{G}_2$ (right)
with $\Delta=\{\alpha_1,\alpha_2\}$.}
\end{figure}

Let $\R$ be an irreducible, reduced root system in a real Euclidean
space $E$ with bilinear symmetric form $(\cdot\,,\cdot)$.
Fix a base $\Delta$ of $\R$ and denote by $\R^{+}$ the set of positive roots.
Write $\alpha>0$ if $\alpha\in\R^{+}$. The Weyl vector $\rho$ is defined as
half the sum of the positive roots: $\rho=\tfrac{1}{2}\sum_{\alpha>0}\alpha$.
The height $\height(\beta)$ of the root $\beta$ is given by $\height(\beta)=
(\beta,\rho)$.
Let $r$ be the rank of $\R$ (that is, the dimension of $E$).
Then the degrees $1<d_1\leq d_2\leq\cdots\leq d_r$ of the fundamental
invariants of $\R$ are uniquely determined by
\[
\prod_{i\geq 1}\frac{1-t^{d_i}}{1-t}=\prod_{\alpha>0}
\frac{1-t^{\height(\alpha)+1}}{1-t^{\height(\alpha)}}.
\]
For example, in the standard representation of the root system
$\mathrm{A}_{n-1}$,
\begin{gather}
E=\{(x_1,\dots,x_n)\in\Real^n:~x_1+\cdots+x_n=0\}, \label{Eq_E} \\[3pt]
\Phi=\{\epsilon_i-\epsilon_j:~1\leq i\neq j\leq n\} \notag
\end{gather}
and
\[
\Delta=\{\alpha_1,\dots,\alpha_{n-1}\}=
\{\epsilon_i-\epsilon_{i+1}:~1\leq i\leq n-1\},
\]
where $\epsilon_i$ denotes the $i$th standard unit vector in $\Real^n$.
Since $\height(\epsilon_i-\epsilon_j)=j-i$,
\[
\prod_{\alpha>0}
\frac{1-t^{\height(\alpha)+1}}{1-t^{\height(\alpha)}}
=\prod_{1\leq i<j\leq n}
\frac{1-t^{j-i+1}}{1-t^{j-i}}=\prod_{i=1}^n \frac{1-t^i}{1-t}.
\]
The degrees of $\mathrm{A}_{n-1}$ are thus
$\{2,3,\dots,n\}$, and the $\mathrm{A}_{n-1}$ case of
\eqref{Eq_CT_Macdonald} is readily seen to be \eqref{Eq_CT_A}.

As a second example we consider the root system
$\mathrm{G}_2$ which is made up of two copies of $\mathrm{A}_2$---one scaled.
$E$ is \eqref{Eq_E} for $n=3$, and the canonical choice of simple roots is
given by
\[
\alpha_1=\epsilon_1-\epsilon_2 \quad\text{and}\quad
\alpha_2=2\epsilon_2-\epsilon_1-\epsilon_3.
\]
The following additional four roots complete the set of positive root $\R^{+}$:
\begin{align*}
\alpha_1+\alpha_2&=\epsilon_2-\epsilon_3, \\
2\alpha_1+\alpha_2&=\epsilon_1-\epsilon_3, \\
3\alpha_1+\alpha_2&=2\epsilon_1-\epsilon_2-\epsilon_3, \\
3\alpha_1+2\alpha_2&=\epsilon_1+\epsilon_2-2\epsilon_3.
\end{align*}
The degrees of $\mathrm{G}_2$ are now easily found to be $\{2,6\}$ and,
after the identification
$(\eup^{\epsilon_1},\eup^{\epsilon_2},\eup^{\epsilon_2})=(x,y,z)$,
the constant term identity \eqref{Eq_CT_Macdonald} becomes
\begin{align}\label{Eq_G2_equal}
\CT \bigg[& \Big(1-\frac{x^2}{yz}\Big)^k
\Big(1-\frac{y^2}{xz}\Big)^k
\Big(1-\frac{z^2}{xy}\Big)^k
\Big(1-\frac{yz}{x^2}\Big)^k
\Big(1-\frac{xz}{y^2}\Big)^k
\Big(1-\frac{xy}{z^2}\Big)^k \\
& \times \Big(1-\frac{x}{y}\Big)^k
\Big(1-\frac{x}{z}\Big)^k
\Big(1-\frac{y}{x}\Big)^k
\Big(1-\frac{y}{z}\Big)^k
\Big(1-\frac{z}{x}\Big)^k
\Big(1-\frac{z}{y}\Big)^k\bigg]  \notag \\[1mm]
&\qquad\qquad\qquad\qquad\qquad =\binom{2k}{k} \binom{6k}{k}. \notag
\end{align}
This was first proved, in independent work, by Habsieger and Zeilberger
\cite{Habsieger86,Zeilberger87}, who both utilised the $\mathrm{A}_2$ case
of Morris' constant term identity \eqref{Eq_CT_Morris}.  They in fact proved
a ($q$-analogue of a) slightly more general result related to another
conjecture of Macdonald we discuss next.

Macdonald's (ex-)conjecture \eqref{Eq_CT_Macdonald} may be generalised
by replacing the exponent $k$ on the left by $k_{\alpha}$, where
$k_{\alpha}$ depends only on the length of the root $\alpha$, i.e.,
$k_{\alpha}=k_{\beta}$ if $\|\alpha\|=\|\beta\|$, where
$\|\cdot\|:=(\cdot\,,\cdot)^{1/2}$. If $\alpha^{\vee}=2\alpha/\|\alpha\|^2$
is the coroot corresponding to $\alpha$ and
$\rho_k=\tfrac{1}{2}\sum_{\alpha>0}k_{\alpha}\alpha$,
then Macdonald's generalisation of \eqref{Eq_CT_Macdonald} is
\begin{equation}\label{Eq_CT_Macdonald_II}
\CT
\prod_{\alpha\in\R} (1-\eup^{\alpha})^{k_{\alpha}}
=\prod_{\alpha>0}
\frac{|(\rho_k,\alpha^{\vee})+k_{\alpha}|!}{|(\rho_k,\alpha^{\vee})|!}.
\end{equation}
If $k_{\alpha}$ is independent of $\alpha$, i.e., $k_{\alpha}=k$,
then $\rho_k=k\rho$ and the above right-hand side may be simplified to
that of \eqref{Eq_CT_Macdonald}.

As an example of \eqref{Eq_CT_Macdonald_II} we consider the
full Habsieger--Zeilberger theorem for $\mathrm{G}_2$
\cite{Habsieger86,Zeilberger87}.
\begin{theorem}\label{Thm_HZ}
Let $\R_s$ and $\R_l$ denote the set of short and long roots
of $\mathrm{G}_2$ respectively. Then
\begin{equation}\label{Eq_CT_G2}
\CT
\prod_{\alpha\in\R_l} (1-\eup^{\alpha})^k
\prod_{\alpha\in\R_s} (1-\eup^{\alpha})^m
=
\frac{(3k+3m)!(3k)!(2k)!(2m)!}{(3k+2m)!(2k+m)!(k+m)!k!k!m!}.
\end{equation}
\end{theorem}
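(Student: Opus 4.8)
The plan is to reduce the two-parameter $\mathrm{G}_2$ evaluation to the $\mathrm{A}_2$ Dyson/Morris constant term, precisely as signalled by the attributions to Habsieger and Zeilberger. First I would pass to the rank-two torus by imposing $xyz=1$. Under this relation the short roots contribute $x_i/x_j$ for $1\le i\ne j\le 3$ and are unaffected, so $\prod_{\alpha\in\R_s}(1-\eup^{\alpha})^m$ becomes the $\mathrm{A}_2$ Dyson kernel $\prod_{1\le i\ne j\le 3}(1-x_i/x_j)^m$; meanwhile each long root collapses, since $x_jx_l=1/x_i$ gives $x_i^2/(x_jx_l)=x_i^3$ and $x_jx_l/x_i^2=x_i^{-3}$. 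Hence the left-hand side of \eqref{Eq_CT_G2} equals
\[
\CT\bigg[\prod_{i=1}^3 (1-x_i^3)^k(1-x_i^{-3})^k \prod_{1\le i\ne j\le 3}\Big(1-\frac{x_i}{x_j}\Big)^m\bigg],
\]
the constant term now taken in two independent variables.

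Next I would expand the long-root factors by the binomial theorem, writing $(1-x_i^3)^k(1-x_i^{-3})^k=\sum_{p_i,q_i}(-1)^{p_i+q_i}\binom{k}{p_i}\binom{k}{q_i}\,x_i^{3(p_i-q_i)}$, and collect terms. This reduces $\CT$ to a finite, binomial-weighted sum of shifted $\mathrm{A}_2$ Dyson coefficients
\[
D_m(c_1,c_2,c_3):=\CT\bigg[x_1^{c_1}x_2^{c_2}x_3^{c_3}\prod_{1\le i\ne j\le 3}\Big(1-\frac{x_i}{x_j}\Big)^m\bigg],
\]
with $c_i\in 3\Z$ and $c_1+c_2+c_3=0$. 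These coefficients carry explicit product evaluations; indeed Morris' identity \eqref{Eq_CT_Morris} is itself the binomial-weighted sum of the $D_m(c)$ produced by expanding its $\prod_i(1-x_i)^a(1-1/x_i)^b$ factor, the only discrepancy being that there the shifts are unit shifts rather than the multiples of $3$ arising here. A cleaner route sidestepping this mismatch is to factor $1-x_i^3=\prod_{\zeta^3=1}(1-\zeta x_i)$, rewriting the long-root product as a genuine $\mathrm{A}_2$ Morris-type kernel in the linear factors $(1-\zeta x_i)$, $\zeta^3=1$—the complex viewpoint that animates the rest of this paper—to which a suitably interpreted Morris evaluation applies.

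Finally I would assemble the pieces: substituting the product formulas for the $D_m(c)$ back into the expansion leaves a single terminating hypergeometric sum in the indices $p_i,q_i$, and the task is to show it collapses to the right-hand side of \eqref{Eq_CT_G2}. I expect this to be a balanced (Saalschützian) sum amenable to the Pfaff--Saalschütz theorem after regrouping, or, failing a clean manual reduction, to Zeilberger's creative telescoping in $k$ and $m$—the very method used elsewhere in this paper. Two checks would keep the computation honest: setting $m=k$ must reproduce $\binom{2k}{k}\binom{6k}{k}$ of \eqref{Eq_G2_equal}, and the claimed quotient must agree with Macdonald's general formula \eqref{Eq_CT_Macdonald_II} specialised to $\mathrm{G}_2$ with exponent $k$ on long roots and $m$ on short roots. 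The genuinely hard part is exactly this hypergeometric simplification: the reduction to $\mathrm{A}_2$ is mechanical, but turning the resulting double sum of Dyson coefficients into the compact factorial quotient on the right of \eqref{Eq_CT_G2} demands the right summation identity, and this is where the ingenuity of Habsieger and Zeilberger was concentrated.
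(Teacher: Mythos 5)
Your opening reduction is sound---on the torus $xyz=1$ the long roots do collapse to $x_i^{\pm3}$ and the short roots to the $\mathrm{A}_2$ Dyson kernel with exponent $m$---but the central evaluation step is a genuine gap. The binomial expansion of $\prod_i(1-x_i^3)^k(1-x_i^{-3})^k$ requires the non-central Dyson coefficients $D_m(c_1,c_2,c_3)$ with shifts $c_i\in3\Z$, and your assertion that ``these coefficients carry explicit product evaluations'' is unsupported and in general false: only coefficients very close to the constant term factor nicely (cf.\ the disturbed-Dyson results \cite{Sills06,SZ06}); generic ones are unsummed terminating hypergeometric sums. Your proposed repair does not close the gap either: after factoring $1-x_i^3=\prod_{\zeta^3=1}(1-\zeta x_i)$ the kernel contains $(1-\omega x_i)^k(1-\omega^2 x_i)^k$ with $\omega$ a primitive cube root of unity, and this is \emph{not} a Morris kernel---\eqref{Eq_CT_Morris} accommodates only the two untwisted factors $(1-x_i)^a(1-1/x_i)^b$, and no ``suitably interpreted Morris evaluation'' of roots-of-unity-twisted factors is available. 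Nor is Pfaff--Saalsch\"utz the relevant summation theorem for what would remain.

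The cure is to run the reduction the other way around, which is exactly what Habsieger and Zeilberger did (the paper cites rather than reproves Theorem~\ref{Thm_HZ}, but its proof of the complex analogue, Theorem~\ref{Thm_CT_G2_complex}, follows their scheme). Set $X=x/y$, $Y=y/z$, $Z=z/x$ and write $(X_1,X_2,X_3)=(X,Y,Z)$. Since $x^3=X/Z$, $y^3=Y/X$, $z^3=Z/Y$ on the torus, the six \emph{long} roots become the Dyson ratio factors $\prod_{1\leq i\neq j\leq 3}(1-X_i/X_j)^k$, while the six \emph{short} roots become the Morris box $\prod_{i=1}^3(1-X_i)^m(1-1/X_i)^m$: a genuine $\mathrm{A}_2$ Morris kernel with $(a,b)=(m,m)$, so the multiples-of-$3$ mismatch never arises. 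Because $X^pY^qZ^r$ restricts to $1$ on $\{xyz=1\}$ precisely when $p=q=r$, the left side of \eqref{Eq_CT_G2} equals $\sum_b\big[X^bY^bZ^b\big]$ of this Morris kernel; each diagonal coefficient is a unit-shifted Morris evaluation of exactly the type \eqref{Eq_n3_b}, and the sum over $b$ is a well-poised ${}_3F_2$ evaluated by Dixon's theorem \eqref{Eq_Dixon}---not Saalsch\"utz---yielding the factorial quotient on the right of \eqref{Eq_CT_G2}. Indeed, specialising \eqref{Eq_CT_G2_complex} at $(u,v)=(2k,2m)$ and using the rewriting \eqref{Eq_rew} to restore all twelve factors recovers the theorem. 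Your telescoping fallback might verify the identity independently, but as written the hypergeometric core of your proposal rests on an evaluation that does not exist.
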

Note that for $k=0$ or $m=0$ this yields \eqref{Eq_CT_A} for $n=3$.
As we shall see in Section~\ref{Sec_G2}, it is the above identity,
not it equal-parameter case \eqref{Eq_G2_equal}, that admits a logarithmic
analogue.

\section{The signatures \texorpdfstring{$\tau_{ij}$}{tau}}\label{Sec_tau}

In our discussion of complex and logarithmic constant term
identities in Sections~\ref{Sec_Complex_Morris}--\ref{Sec_Other},
an important role is played by certain signatures $\tau_{ij}$.
For the convenience of the reader, in this section we have collected
all relevant facts about the $\tau_{ij}$.

For a fixed odd positive integer $n$ and $m:=(n-1)/2$ define $\tau_{ij}$
for $1\leq i<j\leq n$ by
\begin{equation}\label{Eq_tau}
\tau_{ij}=
\begin{cases}
\ph 1 & \text{if $j\leq m+i$}, \\
-1 & \text{if $j>m+i$},
\end{cases}
\end{equation}
and extend this to all $1\leq i,j\leq n$ by setting $\tau_{ij}=-\tau_{ji}$.
Assuming that $1\leq i<n$ we have
\[
\tau_{in}=\chi(n\leq m+i)-\chi(n>m+i),
\]
where $\chi(\text{true})=1$ and $\chi(\text{false})=0$.
Since $n-m=m+1$, this is the same as
\[
\tau_{in}=\chi(i>m)-\chi(i\leq m)=-\tau_{1,i+1}=\tau_{i+1,1}.
\]
For $1\leq i,j<n$ we clearly also have $\tau_{ij}=\tau_{i+1,j+1}$.
Hence the matrix
\begin{equation}\label{Eq_Tmatrix}
\Tau:=(\tau_{ij})_{1\leq i,j\leq n}
\end{equation}
is a \emph{skew-symmetric circulant matrix}.
For example, for $n=5$,
\[
\Tau=\begin{pmatrix}[r]
0&1&1&-1&-1 \\
-1&0&1&1&-1 \\
-1&-1&0&1&1 \\
1&-1&-1&0&1 \\
1&1&-1&-1&0
\end{pmatrix}.
\]
We note that all of the row-sums (and column-sums) of the above matrix are
zero. Because $\Tau$ is a circulant matrix, to verify this property holds
for all (odd) $n$ we only needs to verify this for the first row:
\[
\sum_{j=1}^n\tau_{1j}=\sum_{j=2}^{m+1}1-\sum_{j=m+2}^n 1=m-(n-m-1)=m-m=0.
\]
By the skew symmetry, the vanishing of the row sums may also be stated as
follows.
\begin{lemma}\label{Lem_tau}
For $1\leq i\leq n$,
\[
\sum_{j=1}^{i-1}\tau_{ji}=\sum_{j=i+1}^n\tau_{ij}.
\]
\end{lemma}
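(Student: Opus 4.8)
The plan is to derive the identity as an immediate reformulation of the already-established fact that every row sum of $\Tau$ vanishes, namely $\sum_{j=1}^n\tau_{ij}=0$ for each $1\leq i\leq n$ (verified for the first row and extended to all rows by the circulant structure). First I would isolate the diagonal term. Since $\tau_{ii}=0$ by skew-symmetry, the full row sum splits at $j=i$ as
\[
\sum_{j=1}^{i-1}\tau_{ij}+\sum_{j=i+1}^n\tau_{ij}=0.
\]

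Next I would rewrite the first partial sum using the skew-symmetry relation $\tau_{ij}=-\tau_{ji}$. This replaces $\sum_{j=1}^{i-1}\tau_{ij}$ by $-\sum_{j=1}^{i-1}\tau_{ji}$, so that the displayed identity becomes
\[
\sum_{j=i+1}^n\tau_{ij}=\sum_{j=1}^{i-1}\tau_{ji},
\]
which is exactly the claimed equality.

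There is essentially no obstacle here: the entire content of the lemma is contained in the vanishing of the row sums, and the only manipulations needed are the harmless splitting of the sum at the diagonal and a single application of $\tau_{ij}=-\tau_{ji}$. The one point I would check explicitly is the boundary behaviour at $i=1$ and $i=n$, where one of the two sums is empty; in those extreme cases the identity reduces to the assertion that the remaining full partial sum vanishes, which again follows directly from the vanishing of the corresponding row sum. Thus the proof is a short two-line deduction rather than a genuine computation.
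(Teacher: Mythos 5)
Your proposal is correct and matches the paper's own argument: the paper likewise verifies that the row sums of the circulant matrix $\Tau$ vanish (checking only the first row, with the circulant structure handling the rest) and then notes that the lemma is just a restatement of this via the skew-symmetry $\tau_{ij}=-\tau_{ji}$. Your explicit check of the boundary cases $i=1$ and $i=n$ is a harmless addition; nothing further is needed.
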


A property of the signatures $\tau_{ij}$, which will be important
in our later discussions, can be neatly clarified by having recourse
to Pfaffians.

By a \emph{perfect matching} (or $1$-factor) on
$[n+1]:=\{1,2,\dots,n+1\}$ we mean a graph on the
vertex set $[n+1]$ such that each vertex has degree one, see e.g.,
\cite{Bressoud99,Stembridge90}.
If in a perfect matching $\pi$ the vertices $i<j$ are connected
by an edge we say that $(i,j)\in\pi$. Two edges $(i,j)$ and $(k,l)$
of $\pi$ are said to be crossing if $i<k<j<l$ or $k<i<l<j$.
The crossing number $c(i,j)$ of the edge $(i,j)\in\pi$ is the
number of edges crossed by $(i,j)$, and the
crossing number $c(\pi)$ is the total number
of pairs of crossing edges: $c(\pi)=\tfrac{1}{2}\sum_{(i,j)\in \pi}c(i,j)$.
We can embed perfect matching in the $xy$-plane,
such that (i) the vertex labelled $i$ occurs at the point $(i,0)$,
(ii) the edges $(i,j)$ and $(k,l)$ intersect exactly once if they are
crossing and do not intersect if they are non-crossing.
For example, the perfect matching
$\{(1,3),(2,7),(4,5),(6,8)\}$ corresponds to
\begin{center}
\begin{tikzpicture}[scale=1]
\foreach \x in {0,...,7} \draw[fill=blue] (\x,0) circle (0.04cm);
\draw (0,-0.5) node {$1$};
\draw (1,-0.5) node {$2$};
\draw (2,-0.5) node {$3$};
\draw (3,-0.5) node {$4$};
\draw (4,-0.5) node {$5$};
\draw (5,-0.5) node {$6$};
\draw (6,-0.5) node {$7$};
\draw (7,-0.5) node {$8$};
\draw (0,0) .. controls (0.5,0.7) and (1.5,0.7) .. (2,0);
\draw (1,0) .. controls (1.5,1) and (5.5,1) .. (6,0);
\draw (3,0) .. controls (3.2,0.5) and (3.8,0.5) .. (4,0);
\draw (5,0) .. controls (5.5,0.7) and (6.5,0.7) .. (7,0);
\end{tikzpicture}
\end{center}
and has crossing number $2$ ($c(4,5)=0$, $c(1,3)=c(6,8)=1$ and $c(2,7)=2$).

The \emph{Pfaffian} of a $(2N)\times (2N)$ skew-symmetric matrix $A$
is defined as \cite{Bressoud99,Knuth96,Krattenthaler98,Stembridge90}:
\begin{equation}\label{Eq_Pfaffian}
\Pf(A):=\sum_{\pi} (-1)^{c(\pi)}\prod_{(i,j)\in\pi} A_{ij}.
\end{equation}

After these preliminaries on perfect matching and Pfaffians
we now form a second skew-symmetric matrix, closely related to $\Tau$.
First we extend the $\tau_{ij}$ to $1\leq i,j\leq n+1$
by setting $\tau_{i,n+1}=b_i$.
We then define the $(n+1)\times(n+1)$ skew-symmetric matrix
$Q(\fb,\bb)=(Q_{ij}(\fb,\bb))_{1\leq i,j\leq n+1}$, where
$\fb=(a_1,\dots,a_{n+1})$ and $\bb=(b_1,\dots,b_n)$, as follows:
\begin{equation}\label{Eq_Q-matrix}
Q_{ij}(\fb,\bb)=\tau_{ij}a_ia_j  \qquad \text{for } 1\leq i<j\leq n+1.
\end{equation}
For example, for $n=5$,
\[
Q(\fb,\bb)=\begin{pmatrix}[l]
\quad\; 0    &\ph a_1a_2   &\ph a_1a_3   &   -a_1a_4   &   -a_1a_5   &a_1a_6b_1 \\
   -a_2a_1   &\quad\; 0    &\ph a_2a_3   &\ph a_2a_4   &   -a_2a_5   &a_2a_6b_2 \\
   -a_3a_1   &   -a_3a_2   &\quad\; 0    &\ph a_3a_4   &\ph a_3a_5   &a_3a_6b_3 \\
\ph a_4a_1   &   -a_4a_2   &   -a_4a_3   &\quad\; 0    &\ph a_4a_5   &a_4a_6b_4 \\
\ph a_5a_1   &\ph a_5a_2   &   -a_5a_3   &   -a_5a_4   &\quad\; 0    &a_5a_6b_5 \\
   -a_6a_1b_1&   -a_6a_2b_2&   -a_6a_3b_3&   -a_6a_4b_4&   -a_6a_5b_5&\quad\; 0
\end{pmatrix}.
\]
Note that $\Tau$ is the submatrix of $Q\big((1^{n+1}),\bb\big)$ obtained by
deleting the last row and column.

\begin{proposition}\label{Prop_Pfaffian}
We have
\[
\Pfb\big(Q(\fb,\bb)\big)= (-1)^{\binom{m}{2}} a_1a_2\cdots a_{n+1} (b_1+b_2+\dots+b_n).
\]
\end{proposition}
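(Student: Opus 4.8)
The plan is to strip off the indeterminates $a_i$, pass to a cofactor expansion that is linear in the $b_i$, and then identify the resulting minors as the entries of a kernel vector of $\Tau$. By \eqref{Eq_Q-matrix} each edge $(i,j)$ of a perfect matching contributes the factor $\tau_{ij}a_ia_j$, and since a perfect matching of $[n+1]$ covers every vertex exactly once, each term of the Pfaffian \eqref{Eq_Pfaffian} of $Q(\fb,\bb)$ carries the common factor $a_1a_2\cdots a_{n+1}$. Hence $\Pf(Q(\fb,\bb))=a_1a_2\cdots a_{n+1}\Pf(\widetilde Q)$, where $\widetilde Q:=Q((1^{n+1}),\bb)$ has entries $\widetilde Q_{ij}=\tau_{ij}$ for $1\le i<j\le n$ and $\widetilde Q_{i,n+1}=b_i$; thus it suffices to treat $\fb=(1^{n+1})$. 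Expanding $\Pf(\widetilde Q)$ along its last row and column by the Laplace-type expansion for Pfaffians \cite{Knuth96,Krattenthaler98}, and noting that deleting rows and columns $i$ and $n+1$ from $\widetilde Q$ leaves exactly $\Tau_{\hat i}$ (the $2m\times 2m$ matrix $\Tau$ with its $i$th row and column removed), I obtain
\[
\Pfb(\widetilde Q)=\sum_{i=1}^n(-1)^{i-1}\,\widetilde Q_{i,n+1}\,\Pfb(\Tau_{\hat i})
=\sum_{i=1}^n(-1)^{i-1}b_i\,\Pfb(\Tau_{\hat i}).
\]
Comparing with the target, the proposition reduces to the single claim $(-1)^{i-1}\Pf(\Tau_{\hat i})=(-1)^{\binom{m}{2}}$ for every $i$.

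The key structural step is to recognise $w_i:=(-1)^{i-1}\Pf(\Tau_{\hat i})$ as the entries of a vector in $\ker\Tau$. By the classical skew-symmetric analogue of the cofactor (adjugate) identity, see \cite{Knuth96}, any skew-symmetric matrix $A$ of odd order $n$ satisfies $\sum_{i=1}^n A_{ki}(-1)^{i-1}\Pf(A_{\hat i})=0$ for all $k$; applied to $A=\Tau$ this reads $\Tau w=0$. On the other hand, Lemma~\ref{Lem_tau} states precisely that the row sums of $\Tau$ vanish, i.e.\ $\Tau(1,\dots,1)^{\mathsf T}=0$. Now I would compute one minor, say $\Pf(\Tau_{\hat n})$, and show it equals $(-1)^{\binom{m}{2}}\ne 0$. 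Since $\det\Tau_{\hat n}=\Pf(\Tau_{\hat n})^2\ne 0$, the matrix $\Tau$ has a nonsingular $2m\times 2m$ principal submatrix, so its rank is at least $2m$; being odd and skew-symmetric its rank is even and at most $n-1=2m$, hence exactly $2m$. Therefore $\ker\Tau$ is the line spanned by $(1,\dots,1)$, forcing $w=\lambda(1,\dots,1)$; in particular $w_i$ is independent of $i$. Evaluating at $i=n$ and using that $n-1=2m$ is even gives $w_i=w_n=(-1)^{n-1}\Pf(\Tau_{\hat n})=\Pf(\Tau_{\hat n})=(-1)^{\binom{m}{2}}$, which is exactly the claim; combined with the two displays above this yields the proposition.

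The hard part will be the last computation: the evaluation of the $2m\times 2m$ Toeplitz Pfaffian $\Pf(\Tau_{\hat n})=(-1)^{\binom{m}{2}}$, where $\Tau_{\hat n}$ is the principal submatrix on $\{1,\dots,2m\}$, with super-diagonal entry at offset $k$ equal to $+1$ for $1\le k\le m$ and $-1$ for $m<k\le 2m-1$. Everything else is formal, but this minor is not itself circulant and the naive row expansion leaves the Toeplitz family. I would therefore attack it by induction on $m$, either through a unimodular congruence $\Tau_{\hat n}\mapsto C\,\Tau_{\hat n}\,C^{\mathsf T}$ with $C$ bidiagonal (which sparsifies the matrix to bands at offsets $1$, $m$, $m+1$ and lowers the effective size), or through a sign-reversing involution on the perfect matchings of $[2m]$ that cancels all terms but one of sign $(-1)^{\binom{m}{2}}$. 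The base values $\Pf(\Tau_{\hat n})=1$ for $m=1$ and $-1$ for $m=2$ already confirm the pattern and, as noted, simultaneously supply the nonvanishing needed for the rank argument above.
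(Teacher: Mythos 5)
Your overall architecture is sound and genuinely different from the paper's. The factorisation of $a_1\cdots a_{n+1}$, the Laplace expansion $\Pfb(\widetilde Q)=\sum_{i=1}^n(-1)^{i-1}b_i\Pf(\Tau_{\hat i})$ (the sign is indeed constant: the number of edges straddling $i$ in any matching of the remaining $2m$ vertices has parity $i-1$), the skew-adjugate identity placing $w=\big((-1)^{i-1}\Pf(\Tau_{\hat i})\big)_{i=1}^n$ in $\ker\Tau$, and the rank count pinning $\ker\Tau$ down to the line spanned by $(1,\dots,1)$ are all correct. The paper instead obtains the proportionality to $b_1+\cdots+b_n$ combinatorially, by a matching-by-matching sign check under the rotation $(1,2,\dots,n,n+1)\mapsto(n,1,\dots,n-1,n+1)$, and then extracts the constant as the coefficient of $b_n$, which is the same minor $\Pf(\Tau_{\hat n})$ you arrive at. But your proposal has a genuine gap exactly where you flag ``the hard part'': the evaluation $\Pf(\Tau_{\hat n})=(-1)^{\binom{m}{2}}$ is never carried out---only verified for $m\le 2$ and accompanied by two untested strategies. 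Moreover, in your scheme this evaluation is load-bearing twice: it fixes the constant, \emph{and} its nonvanishing is the sole input giving $\operatorname{rank}\Tau=2m$; were it to vanish, the kernel could exceed the line through $(1,\dots,1)$ and even the proportionality to $\sum_i b_i$ would be unproven. (On the paper's route that proportionality holds unconditionally, so only the constant hinges on the minor.)

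The missing computation, however, needs neither induction nor a sign-reversing involution: the paper closes it in one stroke with a unipotent congruence, and you can splice it directly into your argument. Write $\Tau_{\hat n}$ in $m\times m$ blocks as
\[
\Tau_{\hat n}=\begin{pmatrix} A & B\\ -B^t & A\end{pmatrix},
\]
where, by \eqref{Eq_tau}, $A_{ij}=\sgn(j-i)$ and $B_{ij}=1$ for $j\leq i$, $B_{ij}=-1$ for $j>i$. Take $U$ unipotent with $U_{ii}=1$ for $1\le i\le 2m$ and $U_{i,i+m}=1$ for $1\le i\le m$, i.e.\ $U=\begin{pmatrix}\eem & \eem\\ 0 & \eem\end{pmatrix}$ with $\eem$ the $m\times m$ identity---note this is \emph{not} bidiagonal, as you guessed, but the identity plus an offset-$m$ shift. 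One checks entrywise that $A+B=\eem$ and $A-B^t=-\eem$, so the lower-right block $(A-B^t)+(A+B)$ vanishes and
\[
U^t\,\Tau_{\hat n}\,U=\begin{pmatrix} A & \eem\\ -\eem & \eez\end{pmatrix}.
\]
Since the zero block forces every perfect matching to pair $\{1,\dots,m\}$ bijectively with $\{m+1,\dots,2m\}$, the block $A$ contributes nothing and the Pfaffian equals $(-1)^{\binom{m}{2}}\det(\eem)=(-1)^{\binom{m}{2}}$; as $\Pf(U^tMU)=\Pf(M)$ for unipotent $U$ \cite{Stembridge90}, this yields $\Pf(\Tau_{\hat n})=(-1)^{\binom{m}{2}}$ and, inserted into your rank and expansion steps, completes the proof.
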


\begin{proof}
The main point of our proof below is to exploit a cyclic symmetry of
the terms contributing to $\Pfb\big(Q(\fb,\bb)\big)$.
This reduces the computation of the Pfaffian to that of a sub-Pfaffian
of lower order.

Let $S(\pi;\fb,\bb)$ denote the summand of $\Pfb\big(Q(\fb,\bb)\big)$,
that is,
\[
\Pfb\big(Q(\fb,\bb)\big)=\sum_{\pi} S(\pi;\fb,\bb)\qquad\text{with}\qquad
S(\pi;\fb,\bb)=(-1)^{c(\pi)}\prod_{(i,j)\in\pi} Q_{ij}(\fb,\bb).
\]
From the definition \eqref{Eq_Q-matrix} of $Q_{ij}(\fb,\bb)$
and the fact that $\pi$ is a perfect matching on $[n+1]$,
\begin{equation}\label{Eq_S-expression}
S(\pi;\fb,\bb)=(-1)^{c(\pi)} \prod_{(i,j)\in \pi} a_ia_j\tau_{ij}=
(-1)^{c(\pi)} a_1\cdots a_{n+1}
\prod_{(i,j)\in \pi} \tau_{ij}.
\end{equation}
We now observe that $S(\pi;\fb,\bb)$ is, up to a cyclic permutation of $\bb$,
invariant under the permutation $w$ given by
$(1,2,3,\dots,n,n+1)\mapsto (n,1,2,\dots,n-1,n+1)$.
To see this, denote by $\pi'$ the image of $\pi$ under $w$.
For example, the image of the perfect matching given on the
previous page is
\begin{center}
\begin{tikzpicture}[scale=1]
\foreach \x in {0,...,7} \draw[fill=blue] (\x,0) circle (0.04cm);
\draw (0,-0.5) node {$1$};
\draw (1,-0.5) node {$2$};
\draw (2,-0.5) node {$3$};
\draw (3,-0.5) node {$4$};
\draw (4,-0.5) node {$5$};
\draw (5,-0.5) node {$6$};
\draw (6,-0.5) node {$7$};
\draw (7,-0.5) node {$8$};
\draw (0,0) .. controls (0.5,1) and (4.5,1) .. (5,0);
\draw (2,0) .. controls (2.2,0.5) and (2.8,0.5) .. (3,0);
\draw (4,0) .. controls (4.5,0.8) and (6.5,0.8) .. (7,0);
\draw (1,0) .. controls (1.5,1) and (5.5,1) .. (6,0);
\end{tikzpicture}
\end{center}
Under the permutation $w$, all edges not containing the vertices $1$
or $n+1$ are shifted one unit to the left: $(i,j)\mapsto (i-1,j-1)$.
For the edge $(1,j)$ containing vertex $1$ we have:
\begin{itemize}
\item[(i)]
If $j\leq n$ then $(1,j)\mapsto (j-1,n)$. This also implies that
the edge $(j',n+1)$ ($j'\geq 2$) containing vertex $n+1$ maps to
$(j'-1,n+1)$.
\item[(ii)]
If $j=n+1$ then $(1,j)=(1,n+1)\mapsto (n,n+1)=(j-1,n+1)$.
\end{itemize}
First we consider (i). If we remove the edge $(1,j)$ from $\pi$ and
carry out $w$, then the number of crossings of its image is exactly that
of $\pi$.
Hence we only need to focus on the edge $(1,j)$ and its image under $w$.
In $\pi$ the edge $(1,j)$ has crossing number $c(1,j)\equiv j\pmod{2}$,
while the edge $(j-1,n)$ in $\pi'$
has crossing number $c(j-1,n)\equiv n-j\equiv j+1\pmod{2}$.
Hence $(-1)^{c(\pi)}=-(-1)^{c(\pi')}$.
Since $\tau_{ij}=\tau_{i-1,j-1}$ (for $2\leq i<j\leq n$)
and  $\tau_{1,j}=-\tau_{j-1,n}$ it thus follows that
$\pi$ and $\pi'$ have the same sign.
Finally we note that under $w$, $b_i=\tau_{i,n+1}\mapsto
\tau_{i-1,n+1}=b_{i-1}$ (since $i\neq 1$). We thus conclude that
\begin{equation}\label{Eq_S}
S\big(\pi;\fb,(b_1,\dots,b_n)\big)\mapsto S\big(\pi';\fb,(b_2,\dots,b_n,b_1)\big),
\end{equation}
where we note that both sides depend on a single $b_i(\neq b_1)$ only.
For example, the perfect matching in the above two figures correspond to
\begin{multline*}
S\big((1,3),(2,7),(4,5),(6,8);\fb,(b_1,\dots,b_7)\big) \\
=(-1)^2 \cdot a_1a_3\cdot (-a_2a_7)\cdot a_4a_5 \cdot a_6a_8b_6=
-a_1\cdots a_8b_6
\end{multline*}
and
\begin{multline*}
S\big((1,6),(2,7),(3,4),(5,8);\fb,(b_1,\dots,b_7)\big) \\
=(-1)^3 \cdot (-a_1a_6)\cdot (-a_2a_7)\cdot a_3a_4 \times a_5a_8b_5=
-a_1\cdots a_8b_5.
\end{multline*}

The case (ii) is even simpler; the edge $(1,n+1)$ in $\pi$ and its image
$(n,n+1)$ in $\pi'$ both have crossing number $0$. The crossing numbers of
all other edges do not change by a global shift of one unit to the right,
so that $c(\pi)=c(\pi')$:

\begin{center}
\begin{tikzpicture}[scale=0.65]
\foreach \x in {0,...,7} \draw[fill=blue] (\x,0) circle (0.04cm);
\draw (0,0) .. controls (1.2,1.5) and (5.8,1.5) .. (7,0);
\draw (1,0) .. controls (1.5,1.1) and (4.5,1.1) .. (5,0);
\draw (2,0) .. controls (2.5,0.7) and (3.5,0.7) .. (4,0);
\draw (3,0) .. controls (3.5,0.8) and (5.5,0.8) .. (6,0);
\foreach \x in {10,...,17} \draw[fill=blue] (\x,0) circle (0.04cm);
\draw (16,0) .. controls (16.2,0.5) and (16.8,0.5) .. (17,0);
\draw (10,0) .. controls (10.5,1.1) and (13.5,1.1) .. (14,0);
\draw (11,0) .. controls (11.5,0.7) and (12.5,0.7) .. (13,0);
\draw (12,0) .. controls (12.5,0.8) and (14.5,0.8) .. (15,0);
\draw (8.6,0.3) node {$\stackrel{\displaystyle w}{\longmapsto}$};
\end{tikzpicture}
\end{center}

\noindent
Moreover, $\tau_{ij}=\tau_{i-1,j-1}$ (for $2\leq i<j\leq n$) so that
$\pi$ and $\pi'$ again have the same sign.
Finally, from $b_1=\tau_{1,n+1}\mapsto \tau_{n,n+1}=b_n$ it follows that
once again \eqref{Eq_S} holds,
where this time both sides depend only on $b_1$.

From \eqref{Eq_S} it follows that the Pfaffian
$\Pfb\big(Q(\fb,\bb)\big)$ is symmetric under cyclic permutations of the
$b_i$. But since the Pfaffian, viewed as a function of $\bb$, has degree $1$
it thus follows (see also \eqref{Eq_S-expression}) that
\[
\Pfb\big(Q(\fb,\bb)\big)= C a_1\cdots a_{n+1}(b_1+\cdots+b_n)
\]
for some yet-unknown constant $C$.
We shall determine $C$ by computing the coefficient of
$b_n$ of $\Pfb\big(Q((1^{n+1}),\bb\big)$,
which is equal to the Pfaffian of the
$(2m)\times(2m)$ submatrix $M$ of $\Tau$ obtained by deleting its
last row and column.

We recall the property $\Pf(M)=\Pf(U^t M U)$ of Pfaffians, where $U$
is a unipotent triangular matrix \cite{Stembridge90}.
Choosing the non-zero entries of the $(2m)\times(2m)$ matrix $U$ to be
$U_{ii}=1$ for $i=1,\dots,2m$, and $U_{i,i+m}=1$ for $i=1,\dots,m$,
one transforms $M$ into
\[
\left( \begin{array}{c|c}
M' & \eem \\[3pt] \hline
\raisebox{-3pt}{$-\eem$} & \raisebox{-3pt}{$\eez$} \end{array} \right),
\]
where $M'$ is the upper-left $m\times m$ submatrix of $M$ and
$\eem$ is the $m\times m$ identity matrix.
The Pfaffian of the above matrix, and hence that of $M$, is
exactly (cf.~\cite{Stembridge90})
$(-1)^{\binom{m}{2}} \det(\eem)=(-1)^{\binom{m}{2}}$.
This, in turn, implies that $C=(-1)^{\binom{m}{2}}$,
and the required formula follows.
\end{proof}

\begin{remark}
By a slight modification of the above proof the following more general
Pfaffian results. Let
\begin{align*}
Q_{ij}(X,\fb,\bb)&:=\tau_{ij}a_ia_j(x_i+x_j)
&&\text{for $1\leq i<j\leq n$}
\intertext{and}
Q_{i,n+1}(X,\fb,\bb)&:=\tau_{i,n+1}a_ia_{n+1}=a_ia_{n+1}b_i\hspace{-8mm}
&&\text{for $1\leq i\leq n$},
\end{align*}
and use this to form the $(n+1)\times (n+1)$ skew-symmetric matrix
$Q(X,\fb,\bb)$.
Then
\begin{multline*}
\Pfb\big(Q(X,\fb,\bb)\big) \\
=2^{m-1}(-1)^{\binom{m}{2}} a_1a_2\cdots a_{n+1}
\sum_{i=1}^n b_i (x_{i+1}\cdots x_{i+m}+x_{i+m+1}\cdots x_{i+n-1}),
\end{multline*}
where $x_{i+n}:=x_i$ for $i>n$. For $X=(1/2,\dots,1/2)$ this yields
Proposition~\ref{Prop_Pfaffian}.
\end{remark}

\section{The complex Morris constant term identity}\label{Sec_Complex_Morris}

Thanks to Lemma~\ref{Lem_tau},
\begin{align}
\label{Eq_rew}
\prod_{1\leq i\neq j\leq n} \Big(1-\frac{x_i}{x_j}\Big)&=
\prod_{1\leq i<j\leq n}
\Big(-\frac{x_j}{x_i}\Big)^{\tau_{ij}}
\Big(1-\Big(\frac{x_i}{x_j}\Big)^{\tau_{ij}}\Big)^2 \\
&=(-1)^{\binom{n}{2}}
\prod_{i=1}^n x_i^{\sum_{j=1}^{i-1}\tau_{ji}-\sum_{j=i+1}^n\tau_{ij}}
\prod_{1\leq i<j\leq n}
\Big(1-\Big(\frac{x_i}{x_j}\Big)^{\tau_{ij}}\Big)^2 \notag \\
&=(-1)^m
\prod_{1\leq i<j\leq n}
\Big(1-\Big(\frac{x_i}{x_j}\Big)^{\tau_{ij}}\Big)^2.
\notag
\end{align}
For odd values of $n$ Morris' constant term identity \eqref{Eq_CT_Morris}
can thus be rewritten in the equivalent form
\begin{multline}\label{Eq_CT_Morris_tau}
\CT\bigg[\,\prod_{i=1}^n
(1-x_i)^a\Big(1-\frac{1}{x_i}\Big)^b
\prod_{1\leq i<j\leq n} \Big(1-\Big(\frac{x_i}{x_j}\Big)^{\tau_{ij}}\Big)^{2k}
\bigg] \\
=(-1)^{km}\prod_{i=0}^{n-1} \frac{(a+b+ik)!((i+1)k)!}{(a+ik)! (b+ik)! k!}.
\end{multline}
The crucial point about this rewriting is that in the product
\[
\prod_{1\leq i<j\leq n} \Big(1-\Big(\frac{x_i}{x_j}\Big)^{\tau_{ij}}\Big)^{2k}
\]
each of the variables
$x_1,x_2,\dots,x_n$ occurs exactly $m$ times in one of the numerators
and $m$ times in one of the denominators.
For example,
\[
\prod_{1\leq i<j\leq 3} \Big(1-\Big(\frac{x_i}{x_j}\Big)^{\tau_{ij}}\Big)^{2k}
=\Big(1-\frac{x_1}{x_2}\Big)^{2k}
\Big(1-\frac{x_2}{x_3}\Big)^{2k}
\Big(1-\frac{x_3}{x_1}\Big)^{2k}.
\]
Obviously, for $n$ even such a rewriting is not possible.

We are now interested in the question as to what happens when $2k$ is replaced
by an arbitrary complex variable $u$.
For $n=3$ we will later prove the following proposition.
\begin{proposition}\label{Prop_CT_A2_complex}
For $a,b$ nonnegative integers and $\Re(1+\tfrac{3}{2}u)>0$,
\begin{multline}\label{Eq_n3}
\CT\bigg[
(1-x)^a(1-y)^a(1-z)^a
\Big(1-\frac{1}{x}\Big)^b \Big(1-\frac{1}{y}\Big)^b \Big(1-\frac{1}{z}\Big)^b \\
\qquad\qquad\qquad\qquad\qquad\qquad\times
\Big(1-\frac{x}{y}\Big)^u
\Big(1-\frac{y}{z}\Big)^u
\Big(1-\frac{z}{x}\Big)^u \bigg] \\
=\cos\big(\tfrac{1}{2}\pi u\big)
\frac{\Gamma(1+\frac{3}{2}u)}{\Gamma^3(1+\frac{1}{2}u)}
\prod_{i=0}^2 \frac{(1+\frac{1}{2}iu)_{a+b}}
{(1+\frac{1}{2}iu)_a(1+\frac{1}{2}iu)_b}.
\end{multline}
\end{proposition}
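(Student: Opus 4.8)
The plan is to convert the constant term into an explicit hypergeometric multisum, locate a recurrence in the integer parameter $a$ by creative telescoping, and then pin down the remaining analytic dependence on $u$ through a single closed-form evaluation at $a=b=0$. First I would expand every factor with the binomial theorem \eqref{Eq_binomial_thm}. Using $(1-1/x_i)^b=(-1)^b x_i^{-b}(1-x_i)^b$ the power-of-$x_i$ factors combine into $(xyz)^{-b}\prod_i(1-x_i)^{a+b}$, and writing $(1-x/y)^u=\sum_p\binom{u}{p}(-x/y)^p$ (and cyclically) the vanishing of the total $x$-, $y$- and $z$-degrees forces the exponents of the three $(1-x_i)^{a+b}$ to equal $b-p+r$, $b+p-q$, $b+q-r$. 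After the signs collapse (each of the two $(-1)^{3b}$ factors cancels the other) this leaves the triple sum, which I call $F(a,b,u)$,
\[
F(a,b,u)=\sum_{p,q,r\ge 0}(-1)^{p+q+r}\binom{u}{p}\binom{u}{q}\binom{u}{r}
\binom{a+b}{b-p+r}\binom{a+b}{b+p-q}\binom{a+b}{b+q-r}.
\]
At this stage I would record two elementary symmetries of the underlying constant term: the reflection $x_i\mapsto 1/x_i$ fixes the constant term, and followed by the transposition $y\leftrightarrow z$ it turns the integrand back into its original cyclic form but with the roles of $a$ and $b$ interchanged, so that $F(a,b,u)=F(b,a,u)$, matching the manifest $a\leftrightarrow b$ symmetry of the conjectured right-hand side.

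The base case $a=b=0$ is the analytic heart of the evaluation. Here the binomials $\binom{0}{\cdot}$ force $p=q=r$, and since the defining relation $\binom{u}{m}=(-1)^m(-u)_m/m!$ gives $(-1)^p\binom{u}{p}^3=(-u)_p^3/(p!)^3$, the triple sum collapses to the well-poised series
\[
F(0,0,u)=\hyper{3}{2}{-u,\,-u,\,-u}{1,\,1}{1},
\]
which Dixon's theorem sums in closed form. Simplifying the resulting quotient of Gamma functions with the reflection formula $\Gamma(1+w)\Gamma(1-w)=\pi w/\sin\pi w$ and the duplication identity $\sin\pi u=2\sin(\tfrac{\pi u}{2})\cos(\tfrac{\pi u}{2})$ should produce precisely $\cos(\tfrac12\pi u)\,\Gamma(1+\tfrac32 u)/\Gamma^3(1+\tfrac12 u)$. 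Note that the hypothesis $\Re(1+\tfrac32 u)>0$ is exactly the condition $\Re(b_1+b_2-a_1-a_2-a_3)>0$ for this $\hyper{3}{2}{}{}{1}$ to converge, so nothing is lost in the reduction.

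For the inductive step I would apply a multivariate creative-telescoping (Apagodu--Zeilberger) computation \cite{AZ06} to $F(a,b,u)$, treating $u$ and $b$ as parameters and seeking a recurrence in $a$ together with a rational certificate. I expect the first-order relation
\[
\Big(\prod_{i=0}^2(1+\tfrac{i}{2}u+a)\Big)F(a+1,b,u)
=\Big(\prod_{i=0}^2(1+\tfrac{i}{2}u+a+b)\Big)F(a,b,u),
\]
which is exactly the recurrence satisfied by the right-hand side: its consecutive ratio in $a$ is $\prod_{i=0}^2(1+\tfrac{i}{2}u+a+b)/(1+\tfrac{i}{2}u+a)$ by the elementary Pochhammer shifts $(c)_{a+1}/(c)_a=c+a$ and $(c)_{a+b+1}/(c)_{a+b}=c+a+b$, while the $u$-prefactor is independent of $a$. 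Granting this recurrence and the base case, the proof closes by a short double induction: the quotient $F(a,b,u)/\RHS$ is independent of $a$ (both satisfy the same first-order $a$-recurrence), so it equals $F(0,b,u)/\RHS$; the symmetry gives $F(0,b,u)=F(b,0,u)$, and a second application of $a$-independence reduces this to $F(0,0,u)/\RHS$, which Dixon has already shown to be $1$.

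The main obstacle is the creative-telescoping step itself. Because the factors $\binom{u}{p}$ carry no finite support, the effective summation ranges for $p,q,r$ are infinite along the diagonal $p\approx q\approx r$, so I must both verify that the telescoping certificate's boundary terms genuinely vanish---which relies on the tail decay guaranteed by $\Re(1+\tfrac32 u)>0$---and discharge the heavy but routine algebra of the multivariate certificate (a rational identity in $a,b,u,p,q,r$). Should the algorithm instead return a higher-order recurrence in $a$, the right-hand side, satisfying the first-order relation above, automatically satisfies it as well, and only finitely many additional initial values---again reducible through the $a\leftrightarrow b$ symmetry and small cases---would be required to conclude.
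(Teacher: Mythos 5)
Your proposal is correct in outline and is, at its core, the paper's own method: expand binomially into a triple sum whose support is infinite along the diagonal, evaluate the base case by collapsing to a well-poised $_3F_2$ summed by Dixon's theorem \eqref{Eq_Dixon} (your convergence bookkeeping, $\Re(1+\tfrac32 u)>0$ matching Dixon's condition, agrees with the paper's), and certify a first-order recurrence by the multivariable Zeilberger algorithm, telescoping over $\Z^3$. The one genuine difference is the direction of the recursion. The paper works with the coefficient-extraction form \eqref{Eq_n3_b}, keeps $a$ (through $v=-1-a$) as a \emph{free complex} parameter throughout, and runs the certified recurrence \eqref{Eq_Zeilberger} in the integer $b$, with base case $b=0$; this directly forces $m_0=m_1=m_2$ and needs no symmetry argument. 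You instead run the recurrence in $a$ at fixed $b$, whose base case $F(0,b,u)$ does \emph{not} collapse, so you must invoke the $a\leftrightarrow b$ symmetry (your verification of it via $x_i\mapsto 1/x_i$ followed by $y\leftrightarrow z$ is sound) and apply the $a$-recursion a second time to reach $F(0,0,u)$. Your claimed $a$-recurrence is indeed the image of the paper's $b$-recurrence under this symmetry, so it is certifiable, and your fallback for a higher-order output is reasonable. What your route costs is generality: the double induction confines both parameters to nonnegative integers, which proves the Proposition as stated but not the complex-$a$ strengthening \eqref{Eq_n3_b}, and it is precisely that stronger form the paper later sums over $b$ (again via Dixon) to prove the complex $\mathrm{G}_2$ identity \eqref{Eq_CT_G2_complex}. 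A final small credit: you explicitly flag the vanishing of telescoping boundary terms along the infinite diagonal, a point the paper passes over in silence.
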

As follows from its proof, a slightly more general result in fact holds.
Using $(z)_{n+m}=(z)_n(z+n)_m$ and $(1-x)^a(1-x^{-1})^b=
(-x)^{-b}(1-x)^{a+b}$, then replacing $a\mapsto a-b$, and
finally using $(z-b)_b=(-1)^b (1-z)_b$, the identity \eqref{Eq_n3}
can also be stated as
\begin{multline}\label{Eq_n3_b}
\big[x^by^bz^b\big] \bigg[
(1-x)^a(1-y)^a(1-z)^a
\Big(1-\frac{x}{y}\Big)^u
\Big(1-\frac{y}{z}\Big)^u
\Big(1-\frac{z}{x}\Big)^u \bigg] \\
=\cos\big(\tfrac{1}{2}\pi u\big)
\frac{\Gamma(1+\frac{3}{2}u)}{\Gamma^3(1+\frac{1}{2}u)}
\prod_{i=0}^2 \frac{(-a-\frac{1}{2}iu)_b}{(1+\frac{1}{2}iu)_b},
\end{multline}
where $\big[X^c]f(X)$ (with $X^c=x_1^{c_1}\cdots x_n^{c_n}$) denotes the
coefficient of $X^c$ in $f(X)$. This alternative form
of \eqref{Eq_n3} is true for all $a,u\in\Complex$ such that
$\Re(1+\tfrac{3}{2}u)>0$.

In view of Proposition~\ref{Prop_CT_A2_complex} it seems reasonable to
make the following more general conjecture.

\begin{conjecture}[\textbf{Complex Morris constant term identity}]
\label{Conj_CT_An_complex}
Let $n$ be an odd positive integer, $a,b$ nonnegative integers
and $u\in\Complex$ such that $\Re(1+\tfrac{1}{2}nu)>0$.
Then there exists a polynomial $P_n(x)$,
independent of $a$ and $b$, such that
$P_n(0)=1/(n-2)!!$, $P_n(1)=1$, and
\begin{multline}\label{Eq_complex_Morris}
\CT\bigg[\,\prod_{i=1}^n
(1-x_i)^a\Big(1-\frac{1}{x_i}\Big)^b
\prod_{1\leq i<j\leq n} \Big(1-\Big(\frac{x_i}{x_j}\Big)^{\tau_{ij}}\Big)^u
\bigg] \\
=x^m P_n(x^2)
\frac{\Gamma(1+\frac{1}{2}nu)}{\Gamma^n(1+\frac{1}{2}u)}
\prod_{i=0}^{n-1} \frac{(1+\frac{1}{2}iu)_{a+b}}
{(1+\frac{1}{2}iu)_a(1+\frac{1}{2}iu)_b},
\end{multline}
where $x=x(u):=\cos\big(\frac{1}{2}\pi u\big)$ and $m:=(n-1)/2$.
\end{conjecture}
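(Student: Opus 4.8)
The plan is to separate the dependence on the integer pair $(a,b)$ from the dependence on the complex parameter $u$, and to reduce the whole identity to the single ``base'' evaluation at $a=b=0$. Write $F_n(a,b;u)$ for the constant term on the left of \eqref{Eq_complex_Morris} and
\[
R(a,b;u):=\prod_{i=0}^{n-1}\frac{(1+\tfrac12 iu)_{a+b}}{(1+\tfrac12 iu)_a(1+\tfrac12 iu)_b},
\]
so that the conjecture asserts both that $F_n(a,b;u)=F_n(0,0;u)\,R(a,b;u)$ and that $F_n(0,0;u)$ has the stated shape $x^m P_n(x^2)\,\Gamma(1+\tfrac12 nu)/\Gamma^n(1+\tfrac12 u)$. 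My first goal would be the factorisation, i.e.\ that $F_n(a,b;u)/R(a,b;u)$ is independent of $a$ and $b$; the second, and much harder, goal is the evaluation of $F_n(0,0;u)$.

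For the factorisation I would use the elementary fact that $\CT\bigl[x_i\,\partial_{x_i} g\bigr]=0$ for every Laurent series $g$, since $x_i\partial_{x_i}$ multiplies the coefficient of $x^c$ by $c_i$ and therefore annihilates the constant term. Applying this with $g$ equal to the integrand of \eqref{Eq_complex_Morris} and using $x_i\partial_{x_i}g=(x_i\partial_{x_i}\log g)\,g$, the logarithmic derivative produces linear relations, with coefficients involving $a$, $b$ and $u$, among constant terms of the integrand with $a$ and $b$ shifted. Combining the $n$ single-variable relations---here the cyclic structure of the signatures $\tau_{ij}$ recorded in Lemma~\ref{Lem_tau} should keep the bookkeeping tractable---I would aim to produce the first-order contiguous relation
\[
\Bigl(\textstyle\prod_{i=0}^{n-1}\bigl(a+1+\tfrac12 iu\bigr)\Bigr)F_n(a+1,b;u)
=\Bigl(\textstyle\prod_{i=0}^{n-1}\bigl(a+b+1+\tfrac12 iu\bigr)\Bigr)F_n(a,b;u),
\]
which is exactly the recurrence satisfied by $R(a,b;u)$. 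The same computation applied to the factors $(1-1/x_i)^b$ (equivalently, the inversion symmetry $x_i\mapsto 1/x_i$ of the constant term, which interchanges $a$ and $b$) gives the companion recurrence in $b$, and induction from the base point $a=b=0$ then yields $F_n(a,b;u)=F_n(0,0;u)\,R(a,b;u)$. This is an Aomoto-type argument and, while technical, I expect it to go through as in the classical Selberg/Morris theory.

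The genuine difficulty is the base case
\[
F_n(0,0;u)=\CT\Bigl[\,\textstyle\prod_{1\le i<j\le n}\bigl(1-(x_i/x_j)^{\tau_{ij}}\bigr)^u\Bigr]
=x^m P_n(x^2)\frac{\Gamma(1+\tfrac12 nu)}{\Gamma^n(1+\tfrac12 u)},
\]
which already carries all the arithmetic of the problem. My approach here would be to seek an integral representation turning this constant term into a complex Selberg-type (Dotsenko--Fateev/Aomoto) integral over $\Complex^n$: such integrals are known to evaluate to products of trigonometric factors times squares of real Selberg integrals, and the factor $\cos(\tfrac12\pi u)=x$ together with the ratio of gamma functions is precisely the fingerprint of that complexification. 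The polynomial $P_n(x^2)$ should then emerge from collapsing a product of cosines of linear forms in $u$ into a polynomial in $x^2=\cos^2(\tfrac12\pi u)$. The two prescribed normalisations supply the only ready anchors: at even $u=2k$ one has $x^2=1$ and the identity must reduce to Morris' identity in the form \eqref{Eq_CT_Morris_tau}, forcing $P_n(1)=1$; the value $P_n(0)=1/(n-2)!!$ governs the leading behaviour as $x\to0$ (odd $u$) and is, up to normalisation, equivalent to the logarithmic identity of Conjecture~\ref{Conj_log_CT_AM} obtained by differentiating $m$ times at odd $u$.

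The main obstacle is exactly this base evaluation. Unlike the interaction exponent, the factors $(1-x_i)^a(1-1/x_i)^b$ carry integer exponents, so $F_n(0,0;u)$ is only ``partially complexified'' and does not coincide with a standard complex Selberg integral; making an integral reduction rigorous, and in particular proving that $F_n(0,0;u)$ is $x^m$ times a \emph{genuine polynomial} in $x^2$ rather than a transcendental function of $u$, is what currently resists every method. Creative telescoping, which settles the case $n=3$ in Proposition~\ref{Prop_CT_A2_complex}, does not obviously extend to several variables, and no Selberg analogue delivering $P_n$ for general odd $n$ is known. I would therefore expect the factorisation step to be routine but the existence and determination of $P_n$ to require a genuinely new idea.
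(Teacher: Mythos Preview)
The statement you are addressing is a \emph{conjecture}: the paper does not prove \eqref{Eq_complex_Morris} for general odd $n$, only the case $n=3$ (Proposition~\ref{Prop_CT_A2_complex}) via creative telescoping, and it offers numerical evidence and partial information about $P_n$ for $n=5,7$. So there is no ``paper's own proof'' to compare with; what can be compared is your assessment of where the difficulties lie against the paper's.

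Your proposal splits the problem into a factorisation step (reduction to $a=b=0$) and a base-case evaluation, and you describe the first as ``routine''. The paper disagrees with that assessment. In Section~\ref{Sec_New} the authors carry out precisely this reduction, but only for the \emph{logarithmic} identity, using the Adamovi\'c--Milas recurrence \eqref{Eq_rec_AM} among the constant terms weighted by elementary symmetric functions $e_r(X)$. They then explicitly remark that the analogous recurrence for the complex identity ``seems highly plausible'' but that ``the fact that for general complex $u$ the kernel is not a skew-symmetric function seems to prevent the proof of \cite[Theorem 7.1]{AM11} carrying over to the complex case in a straightforward manner.'' Your Aomoto-type scheme via $\CT[x_i\partial_{x_i}g]=0$ will run into exactly this issue: the logarithmic derivative of $\prod_{i<j}(1-(x_i/x_j)^{\tau_{ij}})^u$ introduces rational functions that, for non-integer $u$, do not recombine cleanly with the symmetric-function machinery that makes the Adamovi\'c--Milas argument (or the classical Aomoto argument for the Selberg integral) close up. In particular, the relation you write down involving $\prod_{i=0}^{n-1}(a+1+\tfrac12 iu)$ and $\prod_{i=0}^{n-1}(a+b+1+\tfrac12 iu)$ is precisely what one \emph{wants}, but deriving it from the $n$ single-variable identities requires eliminating auxiliary constant terms, and the cyclic symmetry of the $\tau_{ij}$ alone does not obviously suffice for that elimination when $u$ is complex. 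So this step is already open, not routine.

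On the base case you are in agreement with the paper: the evaluation of $F_n(0,0;u)$, including the bare assertion that it equals $x^m$ times a polynomial in $x^2$, is the core of the conjecture, and neither a complex-Selberg integral representation nor a multivariable telescoping argument is currently available. Your reading of the two normalisations is correct: $P_n(1)=1$ is forced by the classical Morris identity \eqref{Eq_CT_Morris_tau} at $u=2k$, and $P_n(0)=1/(n-2)!!$ is exactly what is needed for the derivation of the logarithmic identity in Section~\ref{Sec_CT_Log_Morris} (and is in turn equivalent to Conjecture~\ref{Conj_log_CT_AM}).
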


Note that for $u$ an odd positive integer the kernel on the left of
\eqref{Eq_complex_Morris} is a skew-symmetric function, so that its
constant term trivially vanishes. When $u$ is an even integer, say $2k$ then
$x=\cos(\pi k)=(-1)^k$ so that $x^m P_n(x^2)=(-1)^{km} P_n(1)=(-1)^{km}$
in accordance with \eqref{Eq_CT_Morris_tau}.
Similar to the case $n=3$, in the form
\begin{multline*}
\big[(x_1\cdots x_n)^b \big] \bigg[
\CT\bigg[\,\prod_{i=1}^n (1-x_i)^a
\prod_{1\leq i<j\leq n} \Big(1-\Big(\frac{x_i}{x_j}\Big)^{\tau_{ij}}\Big)^u
\bigg] \\
=x^m P_n(x^2)
\frac{\Gamma(1+\frac{1}{2}nu)}{\Gamma^n(1+\frac{1}{2}u)}
\prod_{i=0}^{n-1} \frac{(-a-\frac{1}{2}iu)_b}
{(1+\frac{1}{2}iu)_b}
\end{multline*}
Conjecture \ref{Conj_CT_An_complex} should hold for all $a\in\Complex$.

For $n=1$ the left-side of \eqref{Eq_complex_Morris} does not depend on $u$
so that $P_1(x)=1$. Moreover, from Proposition~\ref{Prop_CT_A2_complex} it
follows that also $P_3(x)=1$.
Extensive numerical computations leave little doubt that the next two
instances of $P_n(x)$ are given by
\begin{align*}
P_5(x)&=\frac{1}{3}(1+2x) \\
P_7(x)&=\frac{1}{45}(3+26x-16x^2+32x^3).
\end{align*}
Conjecturally, we also have $\deg(P_n(x))=\binom{m}{2}$ and
\begin{gather*}
P_n'(0)=2\binom{m}{2}\frac{2n-1}{9(n-2)!!} \\[2mm]
P_n'(1)=\frac{2}{3}\binom{m}{2},\qquad
P''_n(1)=\frac{2}{45}\binom{m}{3}(19m+23),
\end{gather*}
but beyond this we know very little about $P_n(x)$.

To conclude our discussion of the polynomials $P_n(x)$ we note that
if $z_i=z_i(u):=\cos(i \pi u)$, then
\begin{align*}
P_5\big(x^2(u)\big)&=\frac{1}{3}(2+z_1) \\
P_7\big(x^2(u)\big)&=\frac{1}{45}(20+20z_1+4z_2+z_3),
\end{align*}
suggesting that the coefficients of $z_i$ admit a combinatorial
interpretation.

As will be shown in the next section, the complex Morris constant term
identity \eqref{Eq_complex_Morris} implies the logarithmic
Morris constant term identity \eqref{Eq_log_CT_Morris}, and the only
properties of $P_n(x)$ that are essential in the proof are
$P_n(0)=1/(n-2)!!$ and $P_n(1)=1$.

To conclude this section we give a proof of
Proposition~\ref{Prop_CT_A2_complex}.
The reader unfamiliar with the basic setup of the method of creative
telescoping is advised to consult the text~\cite{PWZ96}.

\begin{proof}[Proof of Proposition \ref{Prop_CT_A2_complex}]
Instead of proving \eqref{Eq_n3} we establish the slightly more general
\eqref{Eq_n3_b}.

By a six-fold use of the binomial expansion \eqref{Eq_binomial_thm}, the
constant term identity \eqref{Eq_n3_b} can be written as the following
combinatorial sum:
\begin{multline*}
\sum_{m_0,m_1,m_2=0}^{\infty} \prod_{i=0}^2 (-1)^{m_i}
\binom{u}{m_i} \binom{a}{b+m_i-m_{i+1}} \\
=\cos\big(\tfrac{1}{2}\pi u\big)\,
\frac{\Gamma(1+\frac{3}{2}u)}{\Gamma^3(1+\frac{1}{2}u)}
\prod_{i=0}^2 \frac{(-a-\frac{1}{2}iu)_b}{(1+\frac{1}{2}iu)_b},
\end{multline*}
where $m_3:=m_0$ and where $a,u\in\Complex$ such that
$\Re(1+\frac{3}{2}u)>0$ and $b$ is a nonnegative integer.
If we denote the summand of this identity by $f_b(\frac{1}{2}u,-1-a;\bm)$
where $\bm:=(m_0,m_1,m_2)$, then we need to prove that
\begin{equation}\label{Eq_F}
F_b(u,v):=\sum_{\bm\in\Z^3}f_b(u,v;\bm)
=\cos\big(\pi u\big)\,
\frac{\Gamma(1+3u)}{\Gamma^3(1+u)}
\prod_{i=0}^2 \frac{(1+v-iu)_b}{(1+iu)_b},
\end{equation}
for $\Re(1+3u)>0$.

In our working below we suppress the dependence of the
various functions on the variables $u$ and $v$.
In particular we write $F_b$ and $f_b(\bm)$
for $F_b(u,v)$ and $f_b(u,v;\bm)$.

The function $f_0(\bm)$ vanishes unless $m_0=m_1=m_2$.
Hence
\[
F_0=\sum_{m=0}^{\infty} (-1)^m \binom{2u}{m}^3=
\hyper{3}{2}{-2u,-2u,-2u}{1,1}{1},
\]
where we adopt standard notation for (generalised) hypergeometric series,
see e.g., \cite{AAR99,Bailey35}.
The $_3F_2$ series is summable by the $2a=b=c=-2u$ case of Dixon's sum
\cite[Eq. (2.2.11)]{AAR99}
\begin{multline}\label{Eq_Dixon}
\hyper{3}{2}{2a,b,c}{1+2a-b,1+2a-c}{1} \\
=\frac{\Gamma(1+a)\Gamma(1+2a-b)\Gamma(1+2a-c)
\Gamma(1+a-b-c)}
{\Gamma(1+2a)\Gamma(1+a-b)\Gamma(1+a-c)\Gamma(1+2a-b-c)}
\end{multline}
for $\Re(1+a-b-c)>0$.
As a result,
\[
F_0=
\frac{\Gamma(1-u)\Gamma(1+u)}{\Gamma(1-2u)\Gamma(1+2u)}\cdot
\frac{\Gamma(1+3u)}{\Gamma^3(1+u)}=
\cos(\pi u)\,
\frac{\Gamma(1+3u)}{\Gamma^3(1+u)},
\]
proving the $b=0$ instance of \eqref{Eq_F}.

In the remainder we assume that $b\geq 1$.

Let $\mathcal{C}$ be the generator of the cyclic group $C_3$ acting
on $\bm$ as $\mathcal{C}(\bm)=(m_2,m_0,m_1)$.
With the help of the multivariable Zeilberger algorithm \cite{AZ06},
one discovers the (humanly verifiable) rational function identity
\begin{multline}\label{Eq_Zeilberger}
t_b(\bm)\prod_{i=0}^2 (b+i u) + \prod_{i=0}^2 (b+v-iu) \\
=\sum_{i=0}^2
\Big(r_b\big(\boldsymbol{e}_1+\mathcal{C}^i(\bm)\big)\,
s_b\big(\mathcal{C}^i(\bm)\big)
+r_b\big(\mathcal{C}^i(\bm)\big)\Big),
\end{multline}
where
\begin{align*}
r_b(\bm)&=-\frac{m_0(b+v+m_2-m_0)}{6(b+m_1-m_2)(b+m_2-m_0)}\\
&\qquad \times \bigl((2b+v)(3b^2+3bv+2uv)
+2(m_1-m_2)(3b^2+3bv+v^2-uv)\bigr),
\\
s_b(\bm)&=-\frac{f_{b-1}(\boldsymbol{e}_1+\bm)}{f_{b-1}(\bm)}
=\frac{(2u-m_0)(b+v+m_0-m_1)(b+m_2-m_0-1)}
{(1+m_0)(b+m_0-m_1)(b+v+m_2-m_0-1)},
\\
t_b(\bm)&=-\frac{f_b(\bm)}{f_{b-1}(\bm)}=
\prod_{i=0}^2 \frac{b+v+m_i-m_{i+1}}{b+m_i-m_{i+1}},
\end{align*}
and $\eb_1+\bm:=(1+m_0,m_1,m_2)$.
If we multiply \eqref{Eq_Zeilberger} by $-f_{b-1}(\bm)$ and
use that
$f_b(\bm)=f_b(\mathcal{C}^i(\bm))$ we find that
\begin{multline*}
f_b(\bm)\prod_{i=0}^2 (b+i u)-f_{b-1}(\bm) \prod_{i=0}^2 (b+v-iu) \\
=\sum_{i=0}^2
\Big[r_b\big(\eb_1+\mathcal{C}^i(\bm)\big)\,
f_{b-1}\big(\eb_1+\mathcal{C}^i(\bm)\big)
-r_b\big(\mathcal{C}^i(\bm)\big)f_{b-1}\big(\mathcal{C}^i(\bm)\big)\Big],
\end{multline*}
Summing this over $\bm\in\mathbb Z^3$ the right-hand
side telescopes to zero, resulting in
\[
F_b=F_{b-1} \prod_{i=0}^2 \frac{(b+v-iu)}{(b+iu)}.
\]
By $b$-fold iteration this yields
\[
F_b=F_0\prod_{i=0}^2 \frac{(1+v-iu)_b}{(1+iu)_b}.  \qedhere
\]
\end{proof}

\section{The logarithmic Morris constant term identity}\label{Sec_CT_Log_Morris}

This section contains three parts. In the first very short part,
we present an integral analogue of the logarithmic Morris constant
term identity. This integral may be viewed as a logarithmic version of
the well-known Morris integral.
The second and third, more substantial parts, contain respectively a
proof of Theorem~\ref{Thm_log_CT_Morris} and, exploiting
some further results of Adamovi\'c and Milas, a strengthening
of this theorem.

\subsection{A logarithmic Morris integral}
By a repeated use of Cauchy's integral formula, constant term
identities such as \eqref{Eq_CT_Morris} or \eqref{Eq_log_CT_Morris}
may be recast in the form of multiple integral evaluations.
In the case of \eqref{Eq_CT_Morris} this
leads to the well-known Morris integral \cite{FW08,Morris82}
\begin{multline*}
\Int_{[-\frac{1}{2}\pi,\frac{1}{2}\pi]^n}
\prod_{i=1}^n \eup^{\iup(a-b)\theta_i}
\sin^{a+b}(\theta_i)
\prod_{1\leq i<j\leq n}
\sin^{2k}(\theta_i-\theta_j)
\dup\theta_1\cdots\dup\theta_n \\
=\big(B_{k,n}(a,b)\big)^n
\prod_{i=0}^{n-1} \frac{(a+b+ik)!((i+1)k)!}{(a+ik)! (b+ik)! k!},
\end{multline*}
where $B_{k,n}(a,b)=\pi \iup^{a-b} 2^{-k(n-1)-a-b}$. The Morris integral
may be shown to be a simple consequence of the Selberg integral
\cite{FW08,Selberg44}.
Thanks to \eqref{Eq_log_CT_Morris} we now have a logarithmic
analogue of the Morris integral as follows:
\begin{multline*}
\Int_{[-\frac{1}{2}\pi,\frac{1}{2}\pi]^n}
\prod_{i=1}^n \eup^{\iup(a-b)\theta_i}
\sin^{a+b}(\theta_i)
\prod_{i=1}^m \log\big(1-\eup^{2\iup(\theta_{2i}-\theta_{2i-1})}\big) \\[-2mm]
\qquad\qquad\qquad\qquad \times
\prod_{1\leq i<j\leq n}
\sin^K(\theta_i-\theta_j)
\dup\theta_1\cdots\dup\theta_n \\
=\big(C_{k,n}(a,b)\big)^n \,\frac{1}{n!!}
\prod_{i=0}^{n-1}
\frac{(2a+2b+iK)!!((i+1)K)!!}{(2a+iK)!!(2b+iK)!!K!!},
\end{multline*}
where $C_{k,n}(a,b)=\pi \iup^{a-b-m} 2^{-Km-a-b}$. Unfortunately, this cannot
be rewritten further in a form that one could truly call a logarithmic
Selberg integral.

\subsection{Proof of Theorem~\ref{Thm_log_CT_Morris}}
In this subsection we prove that the logarithmic Morris constant term
identity \eqref{Eq_log_CT_Morris} is nothing but the $m$th derivative of the
complex Morris constant term identity \eqref{Eq_complex_Morris}
evaluated at $u=K:=2k+1$.

To set things up we first prepare a technical lemma.
For $\Symm_n$ the symmetric group on $n$ letters and $w\in\Symm_n$,
we denote by $\sgn(w)$ the signature of the permutation $w$, see
e.g., \cite{Macdonald95}.
The identity permutation in $\Symm_n$ will be written as $\een$.

\begin{lemma}\label{Lem_CL}
For $n$ an odd integer, set $m:=(n-1)/2$.
Let $t_{ij}$ for $1\leq i<j\leq n+1$ be a collection of signatures
\textup(i.e., each $t_{ij}$ is either $+1$ or $-1$\textup)
such that $t_{i,n+1}=1$, and $\tilde{Q}$ a skew-symmetric
matrix with entries $\tilde{Q}_{ij}=t_{ij}$ for $1\leq i<j\leq n+1$.

If  $f(X)$ is a skew-symmetric polynomial in $X=(x_1,\dots,x_n)$,
$g(z)$ a Laurent polynomial or Laurent series in the scalar variable $z$,
and $g_{ij}(X):=g((x_i/x_j)^{t_{ij}})$, then the following statements hold.

\textup{(1)}
For $w\in\Symm_{n}$, denote
$g(w;X):=\prod_{k=1}^m g(x_{w_{2k-1}}/ x_{w_{2k}})$.  Then
\[
\CT\big[ f(X) g(w;X) \big] = \sgn(w) \CT\big[ f(X) g(\een;X) \big].
\]

\textup{(2)}
For $\pi$ a perfect matching on $[n+1]$,
\begin{equation} \label{perfect_sum}
\sum_{\pi}\CT\bigg[f(X) \prod_{\substack{(i,j)\in\pi \\ j\neq n+1}}g_{ij}(X) \bigg]
=\Pf(\tilde{Q}) \CT\big[ f(X) g(\een;X) \big] .
\end{equation}
\end{lemma}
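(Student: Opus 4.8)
The plan is to prove part (1) by a single change of variables and then bootstrap part (2) from it, so that the whole lemma collapses to a sign computation matching the Pfaffian expansion.

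For part (1), I would introduce the variable-relabelling operator $T_w$ defined by $(T_w h)(x_1,\dots,x_n)=h(x_{w_1},\dots,x_{w_n})$ with $w_i:=w(i)$. Since $T_w$ merely permutes the names of the dummy variables, it fixes the constant term: $\CT[T_w h]=\CT[h]$ for every Laurent series $h$. Applying $T_w$ to $f(X)g(\een;X)$ and using that $f$ is skew-symmetric (so $T_w f=\sgn(w)f$) together with $T_w g(\een;X)=g(w;X)$ (which is immediate from the definition $g(w;X)=\prod_{k}g(x_{w_{2k-1}}/x_{w_{2k}})$), I obtain $\CT[f(X)g(\een;X)]=\sgn(w)\,\CT[f(X)g(w;X)]$. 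Multiplying by $\sgn(w)$ and using $\sgn(w)^2=1$ gives the claim.

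For part (2), fix a perfect matching $\pi$ of $[n+1]$. As $n+1$ is even, $\pi$ pairs the vertex $n+1$ with a unique $i_0$, and its remaining $m$ edges $(p_l,q_l)$ (with $p_l<q_l$) form a perfect matching of $[n]\setminus\{i_0\}$. The product in \eqref{perfect_sum} runs only over these $m$ edges, and each factor is $g_{p_lq_l}(X)=g\big((x_{p_l}/x_{q_l})^{t_{p_lq_l}}\big)=g(x_{a_l}/x_{b_l})$, where $(a_l,b_l)=(p_l,q_l)$ if $t_{p_lq_l}=1$ and $(a_l,b_l)=(q_l,p_l)$ if $t_{p_lq_l}=-1$. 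Choosing $w\in\Symm_n$ with $(w_{2l-1},w_{2l})=(a_l,b_l)$ for $1\le l\le m$ and $w_n=i_0$, the product equals $g(w;X)$, so part (1) shows the $\pi$-summand equals $\sgn(w)\,\CT[f(X)g(\een;X)]$. I would first record that reordering the $m$ edges changes $w$ by an even permutation of its slots (swapping two length-$2$ blocks is a product of two transpositions), so $\sgn(w)$ depends only on $\pi$. It then remains to prove the termwise sign identity $\sgn(w)=(-1)^{c(\pi)}\prod_{(i,j)\in\pi}t_{ij}$, after which summing over $\pi$ gives $\sum_\pi\sgn(w)=\Pf(\tilde Q)$ and hence \eqref{perfect_sum}.

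The main obstacle is exactly this termwise sign identity, and I would handle it by comparing two words on $[n]$. View $w$ as the word $(a_1,b_1,\dots,a_m,b_m,i_0)$, and let $\tilde w=(p_1,q_1,\dots,p_m,q_m,i_0)$ be the word in which each edge is written smaller-endpoint-first. These differ precisely at the edges with $t_{p_lq_l}=-1$, where $a_l,b_l$ are transposed, so $\sgn(w)=\sgn(\tilde w)\,(-1)^{\#\{l:\,t_{p_lq_l}=-1\}}=\sgn(\tilde w)\prod_{l=1}^m t_{p_lq_l}$. Since $t_{i_0,n+1}=1$ by hypothesis, the trailing factor satisfies $\prod_l t_{p_lq_l}=\prod_{(i,j)\in\pi}t_{ij}$. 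Finally, appending the fixed point $n+1$ turns $\tilde w$ into $(p_1,q_1,\dots,p_m,q_m,i_0,n+1)$, the word listing all endpoints of $\pi$ (smaller first), whose sign is exactly $(-1)^{c(\pi)}$; this is the standard equivalence between the crossing-number and permutation definitions of the Pfaffian (cf.\ \cite{Stembridge90}), consistent with \eqref{Eq_Pfaffian}. Combining these gives $\sgn(w)=(-1)^{c(\pi)}\prod_{(i,j)\in\pi}t_{ij}$, completing the proof.
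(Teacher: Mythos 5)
Your proposal is correct and follows essentially the same route as the paper: part (1) is the Stanton--Stembridge trick (invariance of $\CT$ under relabelling of variables plus skew-symmetry of $f$), and part (2) reads each matching's contribution as $\sgn(w)$ times the base constant term and identifies $\sum_\pi \sgn(w)$ with the Pfaffian expansion $\sum_\pi (-1)^{c(\pi)}\prod_{(i,j)\in\pi} t_{ij}$ of $\tilde Q$. The only difference is one of detail: you spell out the sign bookkeeping (the orientation flips contributing $\prod_l t_{p_l q_l}$, which is the paper's $(-1)^{d(w)}$, and the standard equivalence of the crossing-number and permutation-sign definitions of the Pfaffian) that the paper compresses into two sentences.
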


We will be needing a special case of this lemma corresponding to
$t_{ij}=\tau_{ij}$ for $1\leq i,j\leq n$, with the $\tau_{ij}$
defined in~\eqref{Eq_tau}.  Then the matrix $\tilde{Q}$ is coincides
with $Q\big((1^{n+1}),(1^n)\big)$
of \eqref{Eq_Q-matrix}, so that by Lemma~\ref{Prop_Pfaffian},
$\Pf(\tilde{Q})=(-1)^{\binom{m}{2}}n$. We summarise this in the
following corollary.

\begin{corollary}\label{Cor_CT}
If in Lemma~\ref{Lem_CL} we specialise
$t_{ij}=\tau_{ij}$ for $1\leq i<j\leq n$, then
\begin{equation}\label{Eq_PM}
\sum_{\pi}
\CT\bigg[f(X) \prod_{\substack{(i,j)\in\pi \\ j\neq n+1}}
g_{ij}(X)\bigg]=(-1)^{\binom{m}{2}} n
\CT\big[f(X)  g(\een;X) \big].
\end{equation}
\end{corollary}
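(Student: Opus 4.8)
The plan is to obtain Corollary~\ref{Cor_CT} as the specialisation $t_{ij}=\tau_{ij}$ (for $1\leq i<j\leq n$) of Lemma~\ref{Lem_CL}(2). With this choice the extended matrix $\tilde Q$ of the lemma has entries $\tilde Q_{ij}=\tau_{ij}$ ($1\leq i<j\leq n$) and $\tilde Q_{i,n+1}=1$, which is exactly $Q\big((1^{n+1}),(1^n)\big)$ from \eqref{Eq_Q-matrix}; Proposition~\ref{Prop_Pfaffian} (with all $a_i=b_i=1$) then gives $\Pf(\tilde Q)=(-1)^{\binom{m}{2}}a_1\cdots a_{n+1}(b_1+\cdots+b_n)=(-1)^{\binom{m}{2}}n$, and \eqref{Eq_PM} follows at once from \eqref{perfect_sum}. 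The substance therefore lies entirely in Lemma~\ref{Lem_CL}, whose proof I outline next.

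For part (1) I would use only that $\CT$ is invariant under relabelling the variables and that $f$ is skew-symmetric. Performing the substitution $x_i\mapsto x_{w(i)}$ sends $g(\een;X)=\prod_{k=1}^m g(x_{2k-1}/x_{2k})$ to $\prod_{k=1}^m g(x_{w_{2k-1}}/x_{w_{2k}})=g(w;X)$ and sends $f(X)$ to $f(x_{w(1)},\dots,x_{w(n)})=\sgn(w)f(X)$. Since the substitution leaves $\CT$ unchanged, this yields $\CT[f(X)g(\een;X)]=\sgn(w)\CT[f(X)g(w;X)]$, which is the assertion of part (1).

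For part (2) the idea is to read each summand on the left of \eqref{perfect_sum} as an instance of part (1). Given a perfect matching $\pi$ on $[n+1]$, let $(s,n+1)$ be the edge at the vertex $n+1$; the remaining $m$ edges partition the $2m$ vertices of $[n]\setminus\{s\}$. I orient each such edge $\{i,j\}$ ($i<j$) as $(i,j)$ when $t_{ij}=1$ and as $(j,i)$ when $t_{ij}=-1$, concatenate the $m$ oriented pairs (say, by increasing least endpoint) into $w_1,\dots,w_{2m}$, and put $w_n=s$. This defines a permutation $w_\pi\in\Symm_n$ with $g(w_\pi;X)=\prod_{(i,j)\in\pi,\,j\neq n+1}g_{ij}(X)$, and $\sgn(w_\pi)$ is independent of the chosen edge order since reordering the length-two blocks is an even permutation. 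By part (1) each summand equals $\sgn(w_\pi)\CT[f(X)g(\een;X)]$, so \eqref{perfect_sum} reduces to the purely combinatorial identity $\sum_{\pi}\sgn(w_\pi)=\Pf(\tilde Q)$.

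The main obstacle is this sign identity, which I would establish termwise as $\sgn(w_\pi)=(-1)^{c(\pi)}\prod_{(i,j)\in\pi}\tilde Q_{ij}$ and then sum using the definition \eqref{Eq_Pfaffian}. Flipping the orientation of each edge with $t_{ij}=-1$ is a transposition, so $\sgn(w_\pi)=\sgn(v_\pi)\prod_{(i,j)\in\pi,\,j\neq n+1}t_{ij}$, where $v_\pi$ orients every edge least-endpoint-first. Appending $n+1$ in the final position creates no new inversions, so $\sgn(v_\pi)$ equals the sign of the matching permutation of the whole of $\pi$ on $[n+1]$ (all edges listed increasingly, with $(s,n+1)$ last); by the classical fact that this sign is $(-1)^{c(\pi)}$, with $c(\pi)$ the crossing number of \eqref{Eq_Pfaffian}, we get $\sgn(v_\pi)=(-1)^{c(\pi)}$. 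Finally $\tilde Q_{s,n+1}=t_{s,n+1}=1$ lets us replace the product over $j\neq n+1$ by the product over all edges, giving the termwise identity and hence $\sum_\pi\sgn(w_\pi)=\Pf(\tilde Q)$. The only real care required is bookkeeping: verifying that the orientation flips and the extra vertex $n+1$ are absorbed correctly into the crossing-number sign.
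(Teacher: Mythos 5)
Your proposal is correct and takes essentially the same route as the paper: the corollary is obtained exactly as there, by specialising $t_{ij}=\tau_{ij}$, identifying $\tilde{Q}$ with $Q\big((1^{n+1}),(1^n)\big)$ from \eqref{Eq_Q-matrix}, and invoking Proposition~\ref{Prop_Pfaffian} to get $\Pf(\tilde{Q})=(-1)^{\binom{m}{2}}n$. Your outline of Lemma~\ref{Lem_CL} likewise mirrors the paper's proof (the Stanton--Stembridge trick for part (1), and interpreting permutations as signed perfect matchings for part (2)), merely making explicit the sign bookkeeping $\sgn(w_\pi)=(-1)^{c(\pi)}\prod_{(i,j)\in\pi}\tilde{Q}_{ij}$ that the paper states tersely.
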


\begin{proof}[Proof of Lemma~\ref{Lem_CL}]
(1) According to the ``Stanton--Stembridge trick''
\cite{Stanton86,Stembridge88,Zeilberger90},
\[
\CT\big[h(X)\big]=\CT\big[w\big(h(X)\big)\big]
\quad\text{for $w\in\Symm_n$},
\]
where $w(h(X))$ is shorthand for $h(x_{w_1},\dots,x_{w_n})$.

For our particular choice of $h$, the skew-symmetric
factor $f(X)$ produces the claimed sign.

(2) A permutation $w\in\Symm_{n}$ may be interpreted as a signed perfect matching
$(-1)^{d(w)}(w_{1},w_{2})\cdots (w_{n-2},w_{n-1}) (w_{n},w_{n+1})$, where $d(w)$ counts
the number $\abs{\{ k\leq m:\, w_{2k-1}>w_{2k}\}}$.
By claim~(1), the left hand-side of~\eqref{perfect_sum} is a multiple of
$ \CT\big[ f(X) g(\een;X) \big] $; the factor is exactly the sum $\sum_{\pi}  (-1)^{c(\pi)} \prod t_{ij}$,
in which one recognises the Pfaffian of~$\tilde Q$.
\end{proof}

\begin{proof}[Conditional proof of \eqref{Eq_log_CT_Morris}]
Suppressing the $a$ and $b$ dependence, denote the left and right-hand sides
of \eqref{Eq_complex_Morris} by $L_n(u)$ and $R_n(u)$ respectively.
We then wish to show that \eqref{Eq_log_CT_Morris} is identical to
\[
L_n^{(m)}(K)=R_n^{(m)}(K).
\]

Let us first consider the right hand side, which we write as
$R_n(u)=p_n(u)r_n(u)$, where
\[
p_n(u)=x^m P_n(x^2), \qquad x=x(u)=\cos\big(\tfrac{1}{2}\pi u\big)
\]
and
\begin{equation}\label{Eq_rnu}
r_n(u)=\frac{\Gamma(1+\frac{1}{2}nu)}{\Gamma^n(1+\frac{1}{2}u)}
\prod_{i=0}^{n-1} \frac{(1+\frac{1}{2}iu)_{a+b}}
{(1+\frac{1}{2}iu)_a(1+\frac{1}{2}iu)_b}.
\end{equation}
Since $x(K)=0$, it follows that for $0\leq j\leq m$,
\begin{equation}\label{Eq_pn}
p_n^{(j)}(K)=
(-1)^{km+m} \Big(\frac{\pi}{2}\Big)^m \, \frac{m!}{(n-2)!!} \, \delta_{jm}.
\end{equation}
Therefore, since $r_n(u)$ is $m$ times differentiable at $u=K$,
\begin{equation}\label{Eq_Rnm}
R_n^{(m)}(K)=p_n^{(m)}(K)r_n(K).
\end{equation}
By the functional equation for the gamma function
\begin{equation}\label{Eq_Legendre}
\Gamma(1+\tfrac{1}{2}N)=\begin{cases}
N!!\,2^{-N/2} \sqrt{\pi/2} &\text{if $N>0$ is odd}, \\[2mm]
N!!\,2^{-N/2} &\text{if $N\ge0$ is even},
\end{cases}
\end{equation}
and, consequently,
\begin{equation}\label{Eq_phfrac}
(1+\tfrac{1}{2}N)_a=\frac{(N+2a)!!}{2^a N!!}
\end{equation}
for any nonnegative integer $N$. Applying these formulae to
\eqref{Eq_rnu} with $u=K$, we find that
\begin{equation}\label{rnK}
r_n(K)=\Big(\frac{2}{\pi}\Big)^m
\prod_{i=0}^{n-1} \frac{(2a+2b+iK)!!((i+1)K)!!}
{(2a+iK)!!(2b+iK)!!K!!}.
\end{equation}
Combined with \eqref{Eq_pn} and \eqref{Eq_Rnm} this implies
\begin{equation}\label{Eq_Rfinal}
R_n^{(m)}(K)=(-1)^{(k+1)m} \, \frac{m!}{(n-2)!!}
\prod_{i=0}^{n-1} \frac{(2a+2b+iK)!!((i+1)K)!!}
{(2a+iK)!!(2b+iK)!!K!!}.
\end{equation}

\medskip

Next we focus on the calculation of $L_n^{(m)}(K)$.
To keep all equations in check we define
\[
f_{ab}(X):=\prod_{i=1}^n (1-x_i)^a\Big(1-\frac{1}{x_i}\Big)^b.
\]
and
\begin{equation}\label{Eq_Fab}
F_{ab}(X):=
\Delta(X)\prod_{i=1}^n x_i^{-m} (1-x_i)^a\Big(1-\frac{1}{x_i}\Big)^b
\prod_{1\leq i\neq j\leq n}\Big(1-\frac{x_i}{x_j}\Big)^k.
\end{equation}

Let $\ib:=(i_1,\dots,i_m)$ and $\jb:=(j_1,\dots,j_m)$.
Then, by a straightforward application of the product rule,
\begin{equation}\label{gnmu}
L_n^{(m)}(u)=
\sum_{1\leq i_1<j_1\leq n} \cdots
\sum_{1\leq i_m<j_m\leq n}
\L_{n;\ib,\jb}(u),
\end{equation}
where
\[
\L_{n;\ib,\jb}(u)=
\CT\bigg[f_{ab}(X)
\prod_{1\leq i<j\leq n}\Big(1-\Big(\frac{x_i}{x_j}\Big)^{\tau_{ij}}\Big)^u
\prod_{\ell=1}^m \log\Big(1-\Big(\frac{x_{i_{\ell}}}{x_{j_{\ell}}}
\Big)^{\tau_{i_{\ell}j_{\ell}}}\Big)
\bigg].
\]
For $u=K$ the kernel without the product over logarithms is a skew-symmetric
function in $X$, so that $\L_{n;\ib,\jb}(K)=0$ if there exists
a pair of variables, say $x_r$ and $x_s$, that does not occur in the
product of logarithms.
In other words, $\L_{n;\ib,\jb}(K)$ vanishes unless all of the $2m=n-1$
entries of $\ib$ and $\jb$ are distinct:
\[
L_n^{(m)}(K)=
\sum
\CT\bigg[f_{ab}(X)
\prod_{1\leq i<j\leq n}\Big(1-\Big(\frac{x_i}{x_j}\Big)^{\tau_{ij}}\Big)^K
\prod_{\ell=1}^m \log\Big(1-\Big(\frac{x_{i_{\ell}}}{x_{j_{\ell}}}
\Big)^{\tau_{i_{\ell}j_{\ell}}}\Big)\bigg],
\]
where the sum is over $1\leq i_{\ell}<j_{\ell}\leq n$ for $1\leq \ell\leq m$
such that all of $i_1,\dots,i_m,j_1,\dots,j_m$ are distinct.
By \eqref{Eq_rew} and
\[
\prod_{1\leq i<j\leq n} \Big(1-\Big(\frac{x_i}{x_j}\Big)^{\tau_{ij}}\Big)
=(-1)^{\binom{m}{2}}\Delta(X) \prod_{i=1}^n x_i^{-m}
\]
this can be simplified to
\[
L_n^{(m)}(K)=(-1)^{km+\binom{m}{2}} \sum \CT\bigg[
F_{ab}(X) \prod_{\ell=1}^m \log\Big(1-\Big(\frac{x_{i_{\ell}}}{x_{j_{\ell}}}
\Big)^{\tau_{i_{\ell}j_{\ell}}}\Big)\bigg].
\]
Using the $\Symm_m$ symmetry of the product over the logarithmic terms,
this reduces further to
\[
L_n^{(m)}(K)=(-1)^{km+\binom{m}{2}} m!
\sum \CT\bigg[F_{ab}(X)
\prod_{\ell=1}^m \log\Big(1-\Big(\frac{x_{i_{\ell}}}{x_{j_{\ell}}}
\Big)^{\tau_{i_{\ell}j_{\ell}}}\Big)\bigg],
\]
where $1\leq i_{\ell}<j_{\ell}\leq n$ for
$1\leq \ell\leq m$ such that $i_1<i_2<\cdots<i_m$, and all of
$i_1,\dots,i_m,j_1,\dots,j_m$ are pairwise distinct.

For the term in the summand corresponding to $\ib,\jb$ there is exactly
one integer $\ell$ in $[n]$ such that $\ell\not\in\ib,\jb$. Pair this
integer with $n+1$ to form the edge $(\ell,n+1)$ in a perfect matching
on $[n+1]$.
The other edges of this perfect matching are given by the
$m$ distinct pairs $(i_1,j_1),\dots,(i_m,j_m)$.
Hence
\[
L_n^{(m)}(K)=(-1)^{km+\binom{m}{2}} m!
\sum_{\pi}  \CT\bigg[F_{ab}(X)
\prod_{\substack{(i,j)\in\pi \\ j\neq n+1}} \log\Big(1-\Big(\frac{x_i}{x_j}
\Big)^{\tau_{ij}}\Big)\bigg].
\]
Since $F_{ab}(X)$ is a skew-symmetric function (it is the product of
a symmetric function times the skew-symmetric Vandermonde product $\Delta(X))$
we are in a position to apply Corollary~\ref{Cor_CT}. Thus
\[
L_n^{(m)}(K)=
(-1)^{km} n\, m! \CT\bigg[F_{ab}(X)
\prod_{i=1}^m \log\Big(1-\frac{x_{{2i-1}}}{x_{2i}}\Big) \bigg].
\]
Finally we replace $X\mapsto X^{-1}$ using
$F_{ab}(X^{-1})=(-1)^m F_{ba}(X)$, and use the symmetry in $a$ and $b$
to find
\[
L_n^{(m)}(K)=(-1)^{(k+1)m}
n \, m! \CT\bigg[F_{ab}(X)
\prod_{i=1}^m \log\Big(1-\frac{x_{{2i}}}{x_{2i-1}}\Big) \bigg].
\]
Equating this with \eqref{Eq_Rfinal} completes the proof
of \eqref{Eq_log_CT_Morris}.
\end{proof}

\subsection{A strengthening of Theorem~\ref{Thm_log_CT_Morris}}\label{Sec_New}
As will be described in more detail below,
using some further results of Adamovi\'c and Milas, it
follows that the logarithmic Morris constant term identity
\eqref{Eq_log_CT_Morris} holds provided it holds for $a=b=0$, i.e.,
provided the logarithmic analogue \eqref{Eq_CT_A} of Dyson's identity holds.
The proof of Theorem~\ref{Thm_log_CT_Morris} given in the previous
subsection implies that the latter follows from what could be termed
the complex analogue of Dyson's identity, i.e., the $a=b=0$ case
of \eqref{Eq_complex_Morris}:
\begin{equation}\label{Eq_complex_Dyson}
\CT\bigg[\,\prod_{i=1}^n
\prod_{1\leq i<j\leq n} \Big(1-\Big(\frac{x_i}{x_j}\Big)^{\tau_{ij}}\Big)^u
\bigg]
=x^m P_n(x^2)
\frac{\Gamma(1+\frac{1}{2}nu)}{\Gamma^n(1+\frac{1}{2}u)}.
\end{equation}
As a consequence of all this, Theorem~\ref{Thm_log_CT_Morris} can be
strengthened as follows.
\begin{theorem}[\textbf{Logarithmic Morris constant term identity,
strong version}]
The complex Dyson constant term identity \eqref{Eq_complex_Dyson}
implies the logarithmic Morris constant term identity.
\end{theorem}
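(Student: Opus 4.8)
The plan is to prove the implication by factoring it through the logarithmic analogue of Dyson's identity, namely the $a=b=0$ case of the logarithmic Morris identity~\eqref{Eq_log_CT_Morris}. Setting $a=b=0$ there, the double-factorial product on the right telescopes to $(nK)!!/\bigl(n!!\,(K!!)^n\bigr)$, so this specialisation is exactly the logarithmic Dyson identity~\eqref{Eq_log_CT_AM} of Conjecture~\ref{Conj_log_CT_AM}. I would therefore establish the chain
\[
\eqref{Eq_complex_Dyson}\ \Longrightarrow\ \eqref{Eq_log_CT_AM}\ \Longrightarrow\ \eqref{Eq_log_CT_Morris},
\]
in which the first arrow is supplied by differentiation and the second by the results of Adamovi\'c and Milas.

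For the first arrow I would simply re-run the conditional proof of Theorem~\ref{Thm_log_CT_Morris} with $a=b=0$ throughout. Observe first that the complex Dyson identity~\eqref{Eq_complex_Dyson} is, verbatim, the $a=b=0$ specialisation of the complex Morris identity~\eqref{Eq_complex_Morris}, and that it holds as an identity of functions of $u$ on the open region $\{u:\Re(1+\tfrac{1}{2}nu)>0\}$, which contains $u=K$; differentiating it $m$ times in $u$ at $u=K$ is thus legitimate, the constant term being a linear coefficient functional that commutes with differentiation in $u$. On the left-hand side, the reduction of $L_n^{(m)}(K)$ from a sum over index pairs to a sum over perfect matchings and the final application of Corollary~\ref{Cor_CT} use only the skew-symmetry of the kernel at $u=K$ and the $\Symm_m$-symmetry of the logarithmic factors; in particular the relevant skew-symmetric function becomes $F_{00}(X)=\Delta(X)\prod_{i=1}^n x_i^{-m}\prod_{1\leq i\neq j\leq n}(1-x_i/x_j)^k$, which is still skew-symmetric, so the corollary still applies. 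On the right-hand side, the computation invokes $P_n$ only through $P_n(0)=1/(n-2)!!$ via~\eqref{Eq_pn}. Both ingredients are contained in~\eqref{Eq_complex_Dyson}, so the $m$th derivative at $u=K$ produces precisely~\eqref{Eq_log_CT_AM}.

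For the second arrow I would quote the further results of Adamovi\'c and Milas \cite{AM10,AM11}: their representation-theoretic analysis of the $M(2,p)$ models yields a reduction showing that the general two-parameter identity~\eqref{Eq_log_CT_Morris} holds once its $a=b=0$ case holds. The entire dependence on the parameters is carried by the ratio $\prod_{i=0}^{n-1}(2a+2b+iK)!!/\bigl((2a+iK)!!\,(2b+iK)!!\bigr)$, and one would check by induction on $a$ and $b$, raised one unit at a time with base case~\eqref{Eq_log_CT_AM}, that the reduction reproduces exactly the right-hand side of~\eqref{Eq_log_CT_Morris}. This second arrow is the main obstacle: the differentiation step is nothing but a re-reading of the proof of Theorem~\ref{Thm_log_CT_Morris}, and the whole gain of the strengthening is that the general-$(a,b)$ complex Morris identity is never needed, its role being taken over by the Adamovi\'c--Milas passage from $(a,b)=(0,0)$ to arbitrary $(a,b)$. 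Pinning down that passage precisely, and in particular verifying that it delivers the above double-factorial ratio with the correct normalisation rather than some other constant, is the step that genuinely depends on external input; everything else is bookkeeping.
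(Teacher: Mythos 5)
Your overall skeleton coincides with the paper's: the implication is factored as \eqref{Eq_complex_Dyson} $\Rightarrow$ \eqref{Eq_log_CT_AM} $\Rightarrow$ \eqref{Eq_log_CT_Morris}, and your first arrow---re-running the conditional proof of Theorem~\ref{Thm_log_CT_Morris} at $a=b=0$, noting that only $P_n(0)=1/(n-2)!!$ and the skew-symmetry of $F_{00}(X)$ enter---is exactly what the paper does, and is fine.

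The gap is in the second arrow, which is where the actual content of the paper's argument lies, and your description of the external input is not accurate. What Adamovi\'c and Milas supply (\cite[Theorem 7.1]{AM11}, quoted as \eqref{Eq_rec_AM}) is \emph{not} a ready-made reduction of \eqref{Eq_log_CT_Morris} to its $a=b=0$ case; it is a recursion in $r$ relating the constant terms $f_r(a)=\CT\big[(-1)^r e_r(X)G_{ab}(X)\big]$ with an elementary symmetric function inserted into the kernel. The passage from this to the parameter reduction must be constructed: iterate \eqref{Eq_rec_AM} to express $f_r(a)$ through $f_0(a)$, use the generating function $\sum_r(-1)^re_r(X)=\prod_i(1-x_i)$, i.e.\ \eqref{Eq_fr_gen}, to identify $\sum_r f_r(a)$ with $f_0(a+1)$, and sum the resulting ${}_2F_1$ by Chu--Vandermonde to get the functional equation
\begin{equation*}
f_0(a+1)=f_0(a)\prod_{i=0}^{n-1}\frac{2a+2b+2+iK}{2a+2+iK}.
\end{equation*}
This is an induction in $a$ \emph{only}; your proposed induction ``on $a$ and $b$, raised one unit at a time'' would stall in the $b$ direction, since \eqref{Eq_rec_AM} shifts $a$ and no $b$-recursion is available. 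The paper instead disposes of $b$ at a stroke: $G_{0,b}(X)=G_{0,0}(X)\prod_i(1-1/x_i)^b$ and $G_{0,0}(X)$ is homogeneous of degree $0$, whence $\CT\big[G_{0,b}(X)\big]=\CT\big[G_{0,0}(X)\big]$ because only the constant monomial of $\prod_i(1-1/x_i)^b$ can contribute. (Alternatively one could use the $X\mapsto X^{-1}$, $a\leftrightarrow b$ symmetry exploited earlier in the paper, but some such idea is needed.) So while your chain and your first arrow are right, the step you defer as ``bookkeeping'' is precisely the proof, and the mechanism you sketch for it is not the one that works.
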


To justify this claim, let $e_r(X)$ for $r=0,1,\dots,n$
denote the $r$th elementary symmetric function.
The $e_r(X)$ have generating function \cite{Macdonald95}
\begin{equation}\label{Eq_e_gen}
\sum_{r=0}^n z^r e_r(X)=\prod_{i=1}^n (1+zx_i).
\end{equation}
Recalling definition \eqref{Eq_Fab} of $F_{ab}$, we now define
$f_r(a)=f_r(a,b,k,n)$ by
\[
f_r(a)=\CT\big[(-1)^r e_r(X) G_{ab}(X)\big],
\]
where
\[
G_{ab}(X)=F_{ab}(X)
\prod_{i=1}^m \log\Big(1-\frac{x_{{2i}}}{x_{2i-1}}\Big).
\]
In the following $b$ may be viewed as a formal or complex variable,
but $a$ must be taken to be an integer.

From \eqref{Eq_e_gen} with $z=-1$ it follows that
\begin{equation}\label{Eq_fr_gen}
\sum_{r=0}^n f_r(a)=\CT\big[G_{a+1,b}(X)\big]=f_0(a+1).
\end{equation}
According to \cite[Theorem 7.1]{AM11} (translated into the notation
of this paper) we also have
\begin{equation}\label{Eq_rec_AM}
(n-r)(2b+rK)f_r(a)=(r+1)(2a+2+(n-r-1)K)f_{r+1}(a),
\end{equation}
where we recall that $K:=2k+1$.
Iterating this recursion yields
\[
f_r(a)=f_0(a) \binom{n}{r}
\prod_{i=0}^{r-1} \frac{2b+iK}{2a+2+(n-i-1)K}.
\]
Summing both sides over $r$ and using \eqref{Eq_fr_gen} leads to
\[
f_0(a+1)=f_0(a) \, \hyper{2}{1}{-n,2b/K}{1-n-(2a+2)/K}{1}.
\]
The $_2F_1$ series sums to $((2a+2b+2)/K)_n/((2a+2)K)_n$
by the Chu--Vandermonde sum \cite[Corollary 2.2.3]{AAR99}.
Therefore,
\[
f_0(a+1)=f_0(a) \prod_{i=0}^{n-1} \frac{2a+2b+2+iK}{2a+2+iK}.
\]
This functional equation can be solved to finally yield
\[
f_0(a)=f_0(0)
\prod_{i=0}^{n-1}
\frac{(2a+2b+iK)!!(iK)!!}{(2b+iK)!!(2a+iK)!!}.
\]

To summarise the above computations, we have established that
\[
\CT\big[G_{ab}(X)\big]=
\CT\big[G_{0,b}(X)\big]
\prod_{i=0}^{n-1}
\frac{(2a+2b+iK)!!(iK)!!}{(2b+iK)!!(2a+iK)!!}.
\]
But since $G_{0,0}(X)$ is homogeneous of degree $0$ it follows that
\[
\CT\big[G_{0,b}(X)\big]=\CT\big[G_{0,0}(X)\big],
\]
so that indeed the logarithmic Morris constant term identity
is implied by its $a=b=0$ case.

We finally remark that it seems highly plausible that the
recurrence \eqref{Eq_rec_AM} has the following analogue for
the complex Morris identity (enhanced by the term $(-1)^r e_r(X)$
in the kernel):
\[
(n-r)(2b+ru)f_r(a)=(r+1)(2a+2+(n-r-1)u)f_{r+1}(a).
\]
However, the fact that for general complex $u$ the kernel
is not a skew-symmetric function seems to prevent the proof of
\cite[Theorem 7.1]{AM11} carrying over to the complex case
in a straightforward manner.

\section{The root system \texorpdfstring{$\mathrm{G}_2$}{G\_2}}\label{Sec_G2}

In this section we prove complex and logarithmic analogues of the
Habsieger--Zeilberger identity \eqref{Eq_CT_G2}.

\begin{theorem}[\textbf{Complex $\mathrm{G_2}$ constant term identity}]
\label{Thm_CT_G2_complex}
For $u,v\in\Complex$ such that $\Re(1+\tfrac{3}{2}u)>0$ and
$\Re(1+\tfrac{3}{2}(u+v))>0$,
\begin{multline}\label{Eq_CT_G2_complex}
\CT\bigg[
\Big(1-\frac{yz}{x^2}\Big)^u
\Big(1-\frac{xz}{y^2}\Big)^u
\Big(1-\frac{xy}{z^2}\Big)^u
\Big(1-\frac{x}{y}\Big)^v
\Big(1-\frac{y}{z}\Big)^v
\Big(1-\frac{z}{x}\Big)^v\,\bigg] \\
=\frac{\cos\big(\tfrac{1}{2}\pi u\big)
\cos\big(\tfrac{1}{2}\pi v\big)
\Gamma(1+\tfrac{3}{2}(u+v))\Gamma(1+\tfrac{3}{2}u)
\Gamma(1+u)\Gamma(1+v)}
{\Gamma(1+\tfrac{3}{2}u+v)\Gamma(1+u+\tfrac{1}{2}v)
\Gamma(1+\tfrac{1}{2}(u+v))\Gamma^2(1+\tfrac{1}{2}u)
\Gamma(1+\tfrac{1}{2}v)}.
\end{multline}
\end{theorem}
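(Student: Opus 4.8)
The plan is to imitate the proof of Proposition~\ref{Prop_CT_A2_complex}, exploiting the fact that $\mathrm{G}_2$ is built from two interlocking copies of $\mathrm{A}_2$: the three ``long'' factors carrying exponent $u$ and the three ``short'' factors carrying exponent $v$. First I would expand the three long factors by the binomial theorem \eqref{Eq_binomial_thm}, turning the left-hand side of \eqref{Eq_CT_G2_complex} into the triple series
\[
\sum_{p,q,r\ge0}(-1)^{p+q+r}\binom{u}{p}\binom{u}{q}\binom{u}{r}\,\CT\Big[\Big(\tfrac{yz}{x^2}\Big)^{p}\Big(\tfrac{xz}{y^2}\Big)^{q}\Big(\tfrac{xy}{z^2}\Big)^{r}\Big(1-\tfrac{x}{y}\Big)^{v}\Big(1-\tfrac{y}{z}\Big)^{v}\Big(1-\tfrac{z}{x}\Big)^{v}\Big].
\]
Each inner constant term extracts a single monomial from the short-root $\mathrm{A}_2$ kernel, i.e.\ it is a coefficient in the complex $\mathrm{A}_2$ (Dyson) identity with exponent $v$, precisely the kind of evaluation furnished by the coefficient form \eqref{Eq_n3_b} of Proposition~\ref{Prop_CT_A2_complex}.

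The second step is to organize and evaluate this sum. Because each long factor is itself homogeneous of degree $0$, the diagonal shift $(p,q,r)\mapsto(p+1,q+1,r+1)$ leaves the extracted monomial unchanged; summing along this one redundant direction produces a very-well-poised $_3F_2$ that should fall to Dixon's theorem \eqref{Eq_Dixon}, exactly as the base sum $F_0$ did in the $\mathrm{A}_2$ proof. What then remains is a genuine double sum over the distinct monomials, weighted by the short-root coefficients from \eqref{Eq_n3_b}, which I would evaluate either by a further classical hypergeometric summation or by a short run of the multivariable Zeilberger algorithm \cite{AZ06,PWZ96}. The convergence hypotheses $\Re(1+\tfrac32 u)>0$ and $\Re(1+\tfrac32(u+v))>0$ are exactly the conditions under which the long-root expansion and the Dixon-type summation are legitimate, which is a good sign that this is the correct route.

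Several built-in checks guide and control the computation. Setting $v=0$ strips away the short factors and leaves the long-root $\mathrm{A}_2$, whose constant term is the $a=b=0$ case of Proposition~\ref{Prop_CT_A2_complex} and matches the right-hand side of \eqref{Eq_CT_G2_complex} at $v=0$; symmetrically $u=0$ recovers the short-root complex Dyson identity; and at any odd integer $u$ or $v$ one half-product becomes skew-symmetric, so both sides vanish, the right-hand side through the factor $\cos(\tfrac12\pi u)$ or $\cos(\tfrac12\pi v)$. If the direct double summation proves intractable, a clean fallback is to keep $u$ complex, prove \eqref{Eq_CT_G2_complex} for all nonnegative integers $v$ by a first-order recurrence in $v$ obtained from the multivariable Zeilberger algorithm (base case $v=0$ as above, odd values handled by the vanishing just noted), and then upgrade to complex $v$ by a Carlson-type analytic continuation argument, using the gamma factors to control the growth in $v$ in the half-plane $\Re(1+\tfrac32(u+v))>0$.

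The hard part will be the closed-form evaluation of the residual double sum. Unlike the single $_3F_2$ of the $\mathrm{A}_2$ case, two coupled summation indices remain, and it is not clear a priori that the sum is poised enough to collapse to a classical summation rather than to a genuinely new evaluation. A closely related subtlety is the transcendental prefactor $\cos(\tfrac12\pi u)\cos(\tfrac12\pi v)$: polynomial recurrences, whether from creative telescoping or from contiguous relations, can only determine the answer up to a period-$1$ factor in each variable, so pinning down these cosines requires either an exact summation that produces them directly (as Dixon's theorem did for $\mathrm{A}_2$) or the analyticity input of the Carlson-type argument. Reconciling the algebraic and the analytic ingredients is where I expect the real work to lie.
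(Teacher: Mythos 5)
Your plan stalls at two points, and the second is fatal as written. First, the inner constant terms in your triple sum are \emph{not} the evaluations furnished by \eqref{Eq_n3_b}: that identity gives only the equal-exponent diagonal coefficients $[x^by^bz^b]$ of the full Morris kernel, including the factors $(1-x_i)^a$, whereas your inner $\CT$ requires the general coefficients $[x^{\alpha}y^{\beta}z^{\gamma}]$, $\alpha+\beta+\gamma=0$, of the bare short-root kernel $(1-x/y)^v(1-y/z)^v(1-z/x)^v$ (setting $a=0$ in \eqref{Eq_n3_b} forces $b=0$ and yields nothing). Those coefficients do admit a closed form---each is a one-parameter well-poised series summable by Dixon's theorem \eqref{Eq_Dixon}---but that is an ingredient you must derive separately, not a citation of \eqref{Eq_n3_b}. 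Second, and decisively: after collapsing the redundant diagonal direction you are left with a genuine double sum of products of Dixon evaluations, and you supply no mechanism to evaluate it; as you yourself concede, it is not clear the sum is poised enough to collapse, and neither a classical summation nor a routine Zeilberger run is guaranteed to dispose of it. The Carlson fallback likewise remains speculative, since analyticity and growth control of the left-hand side (a constant term of a non-polynomial Laurent series) on vertical lines in $v$ is exactly what you would need to establish.

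The paper's proof avoids the double sum entirely through one observation your decomposition misses: the whole $\mathrm{G}_2$ kernel is a \emph{substitution instance} of the complex Morris kernel. Writing $A(x,y,z;a,u)$ for the kernel of \eqref{Eq_n3_b}, one has $G(x,y,z;u,v)=A(x/y,y/z,z/x;v,u)$: under $(x,y,z)\mapsto(x/y,y/z,z/x)$ the factors $(1-x_i)^a$, with $a=v$ now \emph{complex}---this is precisely why the paper proves \eqref{Eq_n3_b} for all $a\in\Complex$---become the three short-root factors, while the cross-ratio factors become the long roots. Since a Laurent series in the ratios has constant term equal to the sum of its diagonal coefficients,
\[
\CT\,G(x,y,z;u,v)=\sum_{b=0}^{\infty}\big[x^by^bz^b\big]A(x,y,z;v,u)
=\cos\big(\tfrac{1}{2}\pi u\big)\,
\frac{\Gamma(1+\tfrac{3}{2}u)}{\Gamma^3(1+\tfrac{1}{2}u)}\,
{}_3F_2\!\left[\genfrac{}{}{0pt}{}{-v,\,-\tfrac{1}{2}u-v,\,-u-v}{1+\tfrac{1}{2}u,\,1+u};1\right],
\]
and a \emph{single} application of Dixon's sum \eqref{Eq_Dixon} with $(2a,b,c)\mapsto(-v,-\tfrac{1}{2}u-v,-u-v)$ produces the second cosine and the full Gamma quotient at once, under exactly the hypotheses $\Re(1+\tfrac{3}{2}u)>0$ (for \eqref{Eq_n3_b}) and $\Re(1+\tfrac{3}{2}(u+v))>0$ (for convergence and Dixon). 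This is the Habsieger--Zeilberger substitution adapted to the complex setting; the asymmetry you missed is that the short roots should arise from the $(1-x_i)^a$ factors of the already-proved $\mathrm{A}_2$ identity, not from expanding the long roots against a second Dyson-type kernel.
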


\begin{proof}
We adopt the method of proof employed by Habsieger and
Zeilberger \cite{Habsieger86,Zeilberger87}
in their proof of Theorem~\ref{Thm_HZ}.

If $A(x,y,z;a,u)$ denotes the kernel on the left of the
complex Morris identity \eqref{Eq_n3_b} for $n=3$, and if
and $G(x,y,z;u,v)$ denotes
the kernel on the left of \eqref{Eq_CT_G2_complex},
then
\[
G(x,y,z;u,v)=A(x/y,y/z,z/x,v,u).
\]
Therefore,
\begin{align*}
\CT G(x,y,z;u,v) &=\CT A(x/y,y/z,z/x;v,u) \\
&=\CT A(x,y,z;v,u)\big|_{xyz=1} \\
&=\sum_{b=0}^{\infty} \big[x^by^bz^b\big] A(x,y,z;v,u) \\
&=\cos\big(\tfrac{1}{2}\pi u\big)
\frac{\Gamma(1+\frac{3}{2}u)}{\Gamma^3(1+\frac{1}{2}u)}\,
\hyper{3}{2}{-v,-\frac{1}{2}u-v,-u-v}{1+\frac{1}{2}u,1+u}{1},
\end{align*}
where the last equality follows from \eqref{Eq_n3_b}.
Summing the $_3F_2$ series by Dixon's sum \eqref{Eq_Dixon}
with $(2a,b,c)\mapsto (-v,-\frac{1}{2}u-v,-u-v)$ completes the proof.
\end{proof}

Just as for the root system $\mathrm{A}_{n-1}$, we can use the complex
$\mathrm{G}_2$ constant term identity to proof a logarithmic analogue
of \eqref{Eq_CT_G2}.

\begin{theorem}\label{Thm_log_CT_G2}
Assume the representation of the $\mathrm{G}_2$ root system as
given in Section~\ref{Sec_2}, and let $\Phi^{+}_s$ and $\Phi^{+}_l$
denote the set of positive short and positive long roots respectively.
Define
\[
G(K,M)=\frac{1}{3}\,
\frac{(3K+3M)!!(3K)!!(2K)!!(2M)!!}{(3K+2M)!!(2K+M)!!(K+M)!!K!!K!!M!!}.
\]
Then for $k,m$ nonnegative integers,
\[
\CT\bigg[\,\eup^{-3\alpha_1-2\alpha_2}\log(1-\eup^{\alpha_2})
\prod_{\alpha\in\R_l^+} (1-\eup^{\alpha})
\prod_{\alpha\in\R_l} (1-\eup^{\alpha})^k
\prod_{\alpha\in\R_s} (1-\eup^{\alpha})^m \bigg]
=G(K,M),
\]
where $(K,M):=(2k+1,2m)$, and
\[
\CT\bigg[\,\eup^{-2\alpha_1-\alpha_2}\log(1-\eup^{\alpha_1})
\prod_{\alpha\in\R_s^+} (1-\eup^{\alpha})
\prod_{\alpha\in\R_l} (1-\eup^{\alpha})^k
\prod_{\alpha\in\R_s} (1-\eup^{\alpha})^m \bigg] \\
=G(K,M),
\]
where $(K,M):=(2k,2m+1)$.
\end{theorem}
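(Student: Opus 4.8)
The complex $\mathrm{G}_2$ identity \eqref{Eq_CT_G2_complex} is already established (Theorem~\ref{Thm_CT_G2_complex}), so the plan is to deduce both logarithmic identities from it unconditionally, by differentiating once and evaluating at suitable integer values of $(u,v)$. Denote by $L=(1-yz/x^2)(1-xz/y^2)(1-xy/z^2)$ and $S=(1-x/y)(1-y/z)(1-z/x)$ the long-root and short-root factors in the kernel of \eqref{Eq_CT_G2_complex}, so that the kernel is $L^uS^v$. For the first identity I would differentiate in the long parameter $u$ and then set $u=K=2k+1$ and $v=M=2m$; for the second I would differentiate in the short parameter $v$ and set $v=M=2m+1$ and $u=K=2k$. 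In each case the parameter that is differentiated is set to an odd integer, which is exactly where the matching cosine $\cos(\tfrac12\pi u)$ (resp.\ $\cos(\tfrac12\pi v)$) on the right of \eqref{Eq_CT_G2_complex} has a simple zero; since the other cosine and the remaining gamma quotient are finite and nonzero there, a single derivative produces a nonzero value.

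Turning to the left-hand side, differentiating $\CT[L^uS^v]$ in $u$ yields $\CT$ of $L^uS^v$ times $\log(1-yz/x^2)+\log(1-xz/y^2)+\log(1-xy/z^2)$, and differentiating in $v$ yields the analogous sum of the three short-root logarithms. The crucial step is to show that at the relevant parities the three summands contribute equally. I would argue this by the cyclic symmetry generated by $\sigma\colon x\mapsto y\mapsto z\mapsto x$, which permutes the three factors of $L$ (resp.\ of $S$) cyclically and which leaves the constant term invariant. The log-free kernel $L^KS^M$ is $\sigma$-invariant in both cases: for the first identity $L^{2k+1}$ and $S^{2m}$ are each $\Symm_3$-symmetric, while for the second $L^{2k}S^{2m+1}$ is $\Symm_3$-skew-symmetric and hence fixed by the even permutation $\sigma$. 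Consequently each of the three logarithmic terms gives the same constant term, and the sum collapses to $3$ times a single logarithm.

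It then remains to match $L^KS^M$ times this single logarithm with the kernel in the theorem. For this I would use the squaring relations $L^2=-\prod_{\alpha\in\R_l}(1-\eup^{\alpha})$ and $S^2=-\prod_{\alpha\in\R_s}(1-\eup^{\alpha})$, which are the $\mathrm{G}_2$ versions of the rewriting \eqref{Eq_rew}, together with the monomial factorisations $L=\eup^{-3\alpha_1-2\alpha_2}\prod_{\alpha\in\R_l^+}(1-\eup^{\alpha})$ and $S=-\eup^{-2\alpha_1-\alpha_2}\prod_{\alpha\in\R_s^+}(1-\eup^{\alpha})$, both consequences of $(1-\eup^{-\beta})=-\eup^{-\beta}(1-\eup^{\beta})$. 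Writing $L^{2k+1}=L^{2k}\cdot L$ and $S^{2m+1}=S^{2m}\cdot S$ then reproduces exactly the products $\prod_{\R_l}(1-\eup^{\alpha})^k$, $\prod_{\R_s}(1-\eup^{\alpha})^m$, the positive-root factor, and the monomial prefactor occurring in the statement. For the first identity the surviving logarithm is $\log(1-xz/y^2)=\log(1-\eup^{-\alpha_2})$, so I would apply the inversion $(x,y,z)\mapsto(x^{-1},y^{-1},z^{-1})$, under which $L\mapsto-L$, $S\mapsto-S$, and $\log(1-\eup^{-\alpha_2})\mapsto\log(1-\eup^{\alpha_2})$, to bring the logarithm into the stated form; for the second the surviving logarithm is already $\log(1-x/y)=\log(1-\eup^{\alpha_1})$, so no inversion is needed.

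Finally, on the right-hand side I would write the value in \eqref{Eq_CT_G2_complex} as $\cos(\tfrac12\pi u)\cos(\tfrac12\pi v)$ times a gamma quotient. Differentiating in the odd variable replaces the vanishing cosine by $-\tfrac{\pi}{2}$ times a sign, while the other cosine and the gamma quotient are simply evaluated at the integer point; the surviving cosine yields a further sign, and applying the duplication and reflection consequences \eqref{Eq_Legendre} and \eqref{Eq_phfrac} to the gamma quotient converts every gamma into a double factorial and supplies a factor $2/\pi$. The $\pi/2$ from the differentiated cosine then cancels this $2/\pi$, all powers of $2$ and signs cancel, and the division by $3$ coming from the collapse of the three logarithms produces precisely the prefactor $\tfrac13$ of $G(K,M)$, whose double-factorial quotient is the natural analogue of the Habsieger--Zeilberger right-hand side \eqref{Eq_CT_G2}. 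The conceptual crux is the cyclic-symmetry reduction of the second step---and it is this that dictates the pairing of the long (resp.\ short) logarithm with differentiation in $u$ (resp.\ $v$); the main practical obstacle is the final bookkeeping, where one must carefully track the signs, the powers of $2$, and the factors of $\pi$ through the reflection and duplication formulae to confirm the clean closed form $G(K,M)$.
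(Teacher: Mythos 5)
Your proposal is correct and follows essentially the same route as the paper: differentiate the complex $\mathrm{G}_2$ identity \eqref{Eq_CT_G2_complex} in $u$ (resp.\ $v$), use the cyclic symmetry in $(x,y,z)$ to collapse the three logarithmic terms into three times a single one, specialise $(u,v)$ to $(2k+1,2m)$ (resp.\ $(2k,2m+1)$), and simplify via \eqref{Eq_Legendre} and \eqref{Eq_phfrac}. The only difference is that you spell out the kernel-matching bookkeeping (the relations $L^2=-\prod_{\alpha\in\R_l}(1-\eup^{\alpha})$, $S^2=-\prod_{\alpha\in\R_s}(1-\eup^{\alpha})$, the monomial factorisations, and the inversion $X\mapsto X^{-1}$ needed to turn $\log(1-\eup^{-\alpha_2})$ into $\log(1-\eup^{\alpha_2})$) which the paper compresses into ``carrying out some simplifications''---and your details check out.
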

We can more explicitly write the kernels of the two logarithmic
$\mathrm{G}_2$ constant term identities as
\begin{multline*}
\frac{z^2}{xy}
\Big(1-\frac{x^2}{yz}\Big)
\Big(1-\frac{y^2}{xz}\Big)
\Big(1-\frac{xy}{z^2}\Big)
\log\Big(1-\frac{y^2}{xz}\Big) \\ \times
\bigg(\Big(1-\frac{x^2}{yz}\Big)
\Big(1-\frac{y^2}{xz}\Big)
\Big(1-\frac{z^2}{xy}\Big)
\Big(1-\frac{yz}{x^2}\Big)
\Big(1-\frac{xz}{y^2}\Big)
\Big(1-\frac{xy}{z^2}\Big) \bigg)^k \\
\times
\bigg(\Big(1-\frac{x}{y}\Big)
\Big(1-\frac{x}{z}\Big)
\Big(1-\frac{y}{x}\Big)
\Big(1-\frac{y}{z}\Big)
\Big(1-\frac{z}{x}\Big)
\Big(1-\frac{z}{y}\Big)\bigg)^m
\end{multline*}
and
\begin{multline*}
\frac{z}{x}
\Big(1-\frac{x}{y}\Big)
\Big(1-\frac{y}{z}\Big)
\Big(1-\frac{x}{z}\Big)
\log\Big(1-\frac{x}{y}\Big) \\ \times
\bigg(\Big(1-\frac{x^2}{yz}\Big)
\Big(1-\frac{y^2}{xz}\Big)
\Big(1-\frac{z^2}{xy}\Big)
\Big(1-\frac{yz}{x^2}\Big)
\Big(1-\frac{xz}{y^2}\Big)
\Big(1-\frac{xy}{z^2}\Big) \bigg)^k \\
\times
\bigg(\Big(1-\frac{x}{y}\Big)
\Big(1-\frac{x}{z}\Big)
\Big(1-\frac{y}{x}\Big)
\Big(1-\frac{y}{z}\Big)
\Big(1-\frac{z}{x}\Big)
\Big(1-\frac{z}{y}\Big)\bigg)^m
\end{multline*}
respectively.

\begin{proof}[Proof of Theorem~\ref{Thm_log_CT_G2}]
If we differentiate \eqref{Eq_CT_G2_complex} with respect to $u$,
use the cyclic symmetry in $(x,y,z)$ of the kernel on the left, and finally
set $u=2k+1=K$, we get
\begin{multline*}
3 \CT\bigg[
\log\Big(1-\frac{xz}{y^2}\Big)
\Big(1-\frac{yz}{x^2}\Big)^K
\Big(1-\frac{xz}{y^2}\Big)^K
\Big(1-\frac{xy}{z^2}\Big)^K
\Big(1-\frac{x}{y}\Big)^v
\Big(1-\frac{y}{z}\Big)^v
\Big(1-\frac{z}{x}\Big)^v\,\bigg] \\
=(-1)^{k+1} \frac{\pi}{2} \,
\frac{\cos\big(\tfrac{1}{2}\pi v\big)
\Gamma(1+\tfrac{3}{2}(K+v))\Gamma(1+\tfrac{3}{2}K)
\Gamma(1+K)\Gamma(1+v)}
{\Gamma(1+\tfrac{3}{2}K+v)\Gamma(1+K+\tfrac{1}{2}v)
\Gamma(1+\tfrac{1}{2}(K+v))\Gamma^2(1+\tfrac{1}{2}K)
\Gamma(1+\tfrac{1}{2}v)}.
\end{multline*}
Setting $v=2m=M$ and carrying out some simplifications using
\eqref{Eq_Legendre} and \eqref{Eq_phfrac} completes the proof
of the first claim.

In much the same way, if we differentiate \eqref{Eq_CT_G2_complex}
with respect to $v$, use the cyclic symmetry in $(x,y,z)$ and
set $v=2m+1=M$, we get
\begin{multline*}
3 \CT\bigg[
\log\Big(1-\frac{x}{y}\Big)
\Big(1-\frac{yz}{x^2}\Big)^u
\Big(1-\frac{xz}{y^2}\Big)^u
\Big(1-\frac{xy}{z^2}\Big)^u
\Big(1-\frac{x}{y}\Big)^M
\Big(1-\frac{y}{z}\Big)^M
\Big(1-\frac{z}{x}\Big)^M\,\bigg] \\
=(-1)^{m+1} \frac{\pi}{2} \,
\frac{\cos\big(\tfrac{1}{2}\pi u\big)
\Gamma(1+\tfrac{3}{2}(u+M))\Gamma(1+\tfrac{3}{2}u)
\Gamma(1+u)\Gamma(1+M)}
{\Gamma(1+\tfrac{3}{2}u+M)\Gamma(1+u+\tfrac{1}{2}M)
\Gamma(1+\tfrac{1}{2}(u+M))\Gamma^2(1+\tfrac{1}{2}u)
\Gamma(1+\tfrac{1}{2}M)}.
\end{multline*}
Setting $u=2k=K$ yields the second claim.
\end{proof}

\section{Other root systems}\label{Sec_Other}

Although further root systems admit complex analogues of the Macdonald
constant term identities \eqref{Eq_CT_Macdonald} or
\eqref{Eq_CT_Macdonald_II}, it seems the existence of
elegant logarithmic identities is restricted to
$\mathrm{A}_{2n}$ and $\mathrm{G}_2$.
To see why this is so, we will discuss the root systems
$\mathrm{B}_n$, $\mathrm{C}_n$ and $\mathrm{D}_n$.
In order to treat all three simultaneously, it will be convenient to
consider the more general \emph{non-reduced} root system $\mathrm{BC}_n$.
With $\epsilon_i$ again denoting the $i$th standard unit vector in $\Real^n$,
this root system is given by
\[
\Phi=\{\pm\epsilon_i:~1\leq i\leq n\}\cup\{\pm2\epsilon_i:~1\leq i\leq n\}
\cup\{\pm\epsilon_i\pm \epsilon_j:~1\leq i<j\leq n\}.
\]
Using the Selberg integral, Macdonald proved that \cite{Macdonald82}
\begin{multline}\label{Eq_BCn}
\CT\bigg[\,\prod_{i=1}^n (1-x_i^{\pm})^a(1-x_i^{\pm 2})^b
\prod_{1\leq i<j\leq n}(1-x_i^{\pm} x_j^{\pm})^k\bigg] \\
=\prod_{i=0}^{n-1}
\frac{(k+ik)!(2a+2b+2ik)!(2b+2ik)!}
{k!(a+b+ik)!(b+ik)!(a+2b+(n+i-1)k)!},
\end{multline}
where $a,b,k$ are nonnegative integers and where we have adopted the
standard shorthand notation
$(1-x^{\pm}):=(1-x)(1-1/x)$,
$(1-x^{\pm 2}):=(1-x^2)(1-1/x^2)$,
$(1-x^{\pm}y^{\pm}):=(1-xy)(1-x/y)(1-y/x)(1-1/xy)$.
For $b=0$ the above identity is the $\mathrm{B}_n$ case of
\eqref{Eq_CT_Macdonald_II},
for $a=0$ it is the $\mathrm{C}_n$ case of \eqref{Eq_CT_Macdonald_II}
and for $a=b=0$ it is the $\mathrm{D}_n$ case of
\eqref{Eq_CT_Macdonald_II} (and also \eqref{Eq_CT_Macdonald}).

A first task in finding a complex analogue of \eqref{Eq_BCn} is
to fix signatures $\tau_{ij}$ and $\sigma_{ij}$ for $1\leq i,j\leq n$
such that
\begin{equation}\label{Eq_sigmatau}
\prod_{1\leq i<j\leq n}(1-x_i^{\pm} x_j^{\pm})
=\prod_{1\leq i<j\leq n}
\Big(1-\big(x_ix_j\big)^{\sigma_{ij}}\Big)^2
\Big(1-\Big(\frac{x_i}{x_j}\Big)^{\tau_{ij}}\Big)^2.
\end{equation}
This would allow the rewriting of \eqref{Eq_BCn} as
\begin{multline}\label{Eq_BCn_II}
\CT\bigg[\,\prod_{i=1}^n (1-x_i^{\pm})^a(1-x_i^{\pm 2})^b
\prod_{1\leq i<j\leq n}
\Big(1-\big(x_ix_j\big)^{\sigma_{ij}}\Big)^{2k}
\Big(1-\Big(\frac{x_i}{x_j}\Big)^{\tau_{ij}}\Big)^{2k}
\bigg] \\
=\prod_{i=0}^{n-1}
\frac{(k+ik)!(2a+2b+2ik)!(2b+2ik)!}
{k!(a+b+ik)!(b+ik)!(a+2b+(n+i-1)k)!},
\end{multline}
after which $2k$ can be replaced by the complex variable $u$.

In the following we abbreviate \eqref{Eq_sigmatau} as
$L(X)=R_{\sigma\tau}(X)$.
In order to satisfy this equation, we note that
for an arbitrary choice of the signatures $\sigma_{ij}$ and $\tau_{ij}$,
\begin{align*}
L(X)
&=\prod_{1\leq i<j\leq n}
\Big(1-\big(x_ix_j\big)^{\pm\sigma_{ij}}\Big)
\Big(1-\Big(\frac{x_i}{x_j}\Big)^{\pm \tau_{ij}}\Big) \\
&=\prod_{1\leq i<j\leq n}(x_ix_j)^{-\sigma_{ij}}
\Big(\frac{x_i}{x_j}\Big)^{-\tau_{ij}}
\Big(1-\big(x_ix_j\big)^{\sigma_{ij}}\Big)^2
\Big(1-\Big(\frac{x_i}{x_j}\Big)^{\tau_{ij}}\Big)^2 \\
&=R_{\sigma\tau}(X)
\prod_{i=1}^n x_i^{-\sum_{j>i}(\sigma_{ij}+\tau_{ij})-
\sum_{j<i}(\sigma_{ji}-\tau_{ji})}.
\end{align*}
We must therefore fix the $\sigma_{ij}$ and $\tau_{ij}$ such that
\begin{equation}\label{Eq_sigmatau_condition}
\sum_{j=i+1}^n(\sigma_{ij}+\tau_{ij})+
\sum_{j=1}^{i-1}(\sigma_{ji}-\tau_{ji})=0
\end{equation}
for all $1\leq i\leq n$.
If we sum this over all $i$ this gives
\[
0=\sum_{1\leq i<j\leq n}\sigma_{ij}\equiv \binom{n}{2} \pmod{2}.
\]
We thus conclude that a necessary condition for
\eqref{Eq_sigmatau_condition},
and hence \eqref{Eq_sigmatau}, to hold is that
$n\equiv 0,1 \pmod{4}$.
As we shall show next it is also a sufficient condition,
as there are many solutions to \eqref{Eq_sigmatau_condition}
for the above two congruence classes.
\begin{lemma}\label{Lem_sigmatau}
For $n\equiv 1\pmod{4}$ define $m:=(n-1)/2$ and $p:=m/2$.
If we choose $\tau_{ij}$ as in \eqref{Eq_tau} and
$\sigma_{ij}$, $1\le i<j\le n$, as
\begin{equation}\label{Eq_sigma1}
\sigma_{ij}=\begin{cases}
-1 & \text{if $p<j-i\leq 3p$}, \\
\ph 1 & \text{otherwise},
\end{cases}
\end{equation}
then \eqref{Eq_sigmatau_condition}, and thus \eqref{Eq_sigmatau}, is satisfied.
\end{lemma}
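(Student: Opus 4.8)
The plan is to verify the single scalar identity \eqref{Eq_sigmatau_condition} for each fixed $i$; the identity \eqref{Eq_sigmatau} itself then follows at once from the factorisation of $L(X)$ computed just before the lemma, since \eqref{Eq_sigmatau_condition} is exactly the statement that every exponent $-\sum_{j>i}(\sigma_{ij}+\tau_{ij})-\sum_{j<i}(\sigma_{ji}-\tau_{ji})$ vanishes. The observation that makes the verification painless is that both families of signs depend only on the difference $j-i$: writing $\tau(d)$ for the common value of $\tau_{ij}$ whenever $j-i=d$, and $\sigma(d)$ for the common value of $\sigma_{ij}$ whenever $j-i=d$ (both well defined for $1\le d\le n-1=2m$ by \eqref{Eq_tau} and \eqref{Eq_sigma1}), I would first split the left-hand side of \eqref{Eq_sigmatau_condition} into a ``$\tau$-part'' and a ``$\sigma$-part''.

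For the $\tau$-part, $\sum_{j=i+1}^n\tau_{ij}-\sum_{j=1}^{i-1}\tau_{ji}$, I invoke the skew symmetry $\tau_{ji}=-\tau_{ij}$ to rewrite it as $\sum_{j\neq i}\tau_{ij}$, the $i$th row sum of the matrix $\Tau$. By Lemma~\ref{Lem_tau} this vanishes identically, so the $\tau$-part contributes nothing. This reduces the whole problem to showing that the symmetric Toeplitz array $(\sigma_{ij})$ (with $\sigma_{ji}:=\sigma_{ij}$) has vanishing row sums, that is, $\sum_{d=1}^{n-i}\sigma(d)+\sum_{d=1}^{i-1}\sigma(d)=0$ for every $i$.

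Setting $s(M):=\sum_{d=1}^M\sigma(d)$ and $a:=i-1$ (so that $n-i=2m-a$), the required identity becomes $s(a)+s(2m-a)=0$ for $0\le a\le 2m$, and I would establish this from two elementary facts about $\sigma$. First, recalling $m=2p$ so that $2m=4p$, \eqref{Eq_sigma1} gives $\sigma=+1$ on $p$ consecutive values, then $-1$ on $2p$ values, then $+1$ on $p$ values, whence the total $s(2m)=p-2p+p=0$. Second, the defining condition $p<d\le 3p$ is invariant under the reflection $d\mapsto 4p+1-d$, so that $\sigma(2m+1-d)=\sigma(d)$; i.e.\ $\sigma$ is palindromic on $\{1,\dots,2m\}$. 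Combining these, $s(2m-a)=s(2m)-\sum_{d=2m-a+1}^{2m}\sigma(d)=-\sum_{e=1}^{a}\sigma(2m+1-e)=-\sum_{e=1}^{a}\sigma(e)=-s(a)$, which is precisely what is needed.

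The verification is therefore essentially automatic once these two reductions are in place, and the only point demanding genuine care is the palindrome symmetry $\sigma(2m+1-d)=\sigma(d)$: I expect this to be the crux, since it is exactly the feature of the ad hoc choice \eqref{Eq_sigma1}---together with its split into blocks of sizes $p,2p,p$---that forces the $\sigma$-row sums to vanish, and it is what one must engineer by hand when building an analogous $\sigma$ for the companion case $n\equiv 0\pmod 4$. Everything else, namely the splitting of \eqref{Eq_sigmatau_condition} and the appeal to Lemma~\ref{Lem_tau} to dispose of the $\tau$-part, is routine bookkeeping.
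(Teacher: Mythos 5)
Your proof is correct, and its first step coincides with the paper's: both dispose of the $\tau$-part of \eqref{Eq_sigmatau_condition} by appealing to Lemma~\ref{Lem_tau}, reducing the problem to the vanishing of the row sums $\sum_{j=i+1}^n\sigma_{ij}+\sum_{j=1}^{i-1}\sigma_{ji}$. Where you genuinely diverge is in how that $\sigma$-condition is verified. The paper extends the array cyclically by setting $\sigma_{i,n+j}:=\sigma_{ji}$ for $1\le j\le i-1$, proves the shift-invariance $\sigma_{i+1,j+1}=\sigma_{ij}$ (whose only nontrivial case, the wrap at $j=n$, amounts to $\sigma_{1,i+1}=\sigma_{i,n}$), and thereby reduces every row to the first row, which is summed directly as $p-2p+(n-3p-1)=n-4p-1=0$. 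You instead work with the partial sums $s(M)=\sum_{d=1}^M\sigma(d)$ and deduce the reflection identity $s(a)+s(2m-a)=0$ from the two facts $s(2m)=p-2p+p=0$ and the palindromic symmetry $\sigma(2m+1-d)=\sigma(d)$; all your individual steps (the substitution $d\mapsto 2m+1-e$, the invariance of $p<d\le 3p$ under $d\mapsto 4p+1-d$, the edge cases $a=0,2m$) check out. The two arguments use the same two ingredients in different clothing: your palindrome property is exactly the wrap-compatibility $\sigma_{1,i+1}=\sigma_{i,n}$ that the paper verifies, and your $s(2m)=0$ is its first-row computation. Your formulation has the merit of isolating precisely which features of \eqref{Eq_sigma1} are used (blocks of sizes $p,2p,p$ with zero signed total, plus reflection symmetry), whereas the paper's cyclic-shift argument parallels its earlier circulant treatment of the $\tau_{ij}$. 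One caveat on your closing remark: in the companion case $n\equiv 0\pmod 4$ the paper's $\sigma_{ij}$ of Lemma~\ref{Lem_sigmatau_II} depends on $i+j$ rather than $j-i$, and there the $\tau$-row sums do not vanish, so the palindromic-Toeplitz mechanism you highlight is not what gets engineered there; but this does not affect the correctness of your proof of the present lemma.
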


We can extend the definition of $\sigma_{ij}$ to all
$1\leq i,j\leq n$ by setting $\sigma_{ij}=-\sigma_{ji}$.
Then the matrix $\Sigma=(\sigma_{ij})_{1\leq i,j\leq n}$ is a skew-symmetric
Toeplitz matrix.
For example, for $n=5$ the above choice for the $\sigma_{ij}$ generates
\[
\Sigma=\begin{pmatrix}[r]
0 & 1&-1&-1&1 \\
-1& 0& 1&-1&-1 \\
 1&-1& 0&1&-1 \\
 1& 1&-1&0&1 \\
-1& 1& 1&-1&0
\end{pmatrix}.
\]

\begin{proof}[Proof of Lemma~\ref{Lem_sigmatau}]
Note that by Lemma~\ref{Lem_tau} we only need to prove that
\[
\sum_{j=i+1}^n\sigma_{ij}+\sum_{j=1}^{i-1}\sigma_{ji}=0.
\]
If for $1\leq j\leq i-1$ we define $\sigma_{i,n+j}:=-\sigma_{ij}=\sigma_{ji}$
then this becomes
\begin{equation}\label{Eq_nul}
\sum_{j=i+1}^{n+i-1}\sigma_{ij}=0.
\end{equation}
We now observe that $\sigma_{i+1,j+1}=\sigma_{i,j}$.
For $j<n$ or $j>n$ this follows immediately from \eqref{Eq_sigma1}.
For $j=n$ it follows from $\sigma_{1,i+1}=\sigma_{i,n}$, which again
follows from \eqref{Eq_sigma1} since $p<n-i\leq 3p$ is equivalent to
$p<i\leq 3p$.
Thanks to $\sigma_{i+1,j+1}=\sigma_{i,j}$
we only need to check \eqref{Eq_nul} for $i=1$. Then
\[
\sum_{j=2}^n\sigma_{ij}=\sum_{j=2}^{p+1}1-\sum_{j=p+2}^{3p+1}1+\sum_{j=3p+2}^n1
=n-4p-1=0. \qedhere
\]
\end{proof}

\begin{lemma}\label{Lem_sigmatau_II}
For $n\equiv 0\pmod{4}$ define $m:=(n-2)/2$.
If we choose $\tau_{ij}$ as in \eqref{Eq_tau} and
$\sigma_{ij}$ as
\[
\sigma_{ij}=\begin{cases}
\ph 1 & \text{if $i+j$ is even or $i+j=m+2$}, \\
-1 & \text{if $i+j$ is odd and $i+j\neq m+2$},
\end{cases}
\]
then \eqref{Eq_sigmatau_condition}, and thus \eqref{Eq_sigmatau}, is satisfied.
\end{lemma}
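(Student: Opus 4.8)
The plan is to verify \eqref{Eq_sigmatau_condition} separately for each $i$, just as in the proof of Lemma~\ref{Lem_sigmatau}, but with one essential new ingredient. Using the skew extension $\tau_{ji}=-\tau_{ij}$, the left-hand side of \eqref{Eq_sigmatau_condition} splits as $S_i+T_i$, where
\[
T_i:=\sum_{\substack{j=1\\ j\neq i}}^n\tau_{ij}
\quad\text{and}\quad
S_i:=\sum_{j=i+1}^n\sigma_{ij}+\sum_{j=1}^{i-1}\sigma_{ji},
\]
so that $T_i$ is exactly the $i$th row sum of the skew-symmetric Toeplitz matrix $(\tau_{ij})$. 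In Lemma~\ref{Lem_sigmatau} the term $T_i$ was simply discarded, because Lemma~\ref{Lem_tau} guarantees vanishing row sums; \textbf{that lemma, however, was proved for odd $n$}, whereas here $n$ is even. Computing the $\tau$-contribution explicitly, and showing that it is cancelled by the single ``defect'' deliberately built into the $\sigma_{ij}$, is therefore the heart of the argument.

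First I would compute $T_i$. Since $\tau_{ij}$ depends only on $j-i$, a direct count shows that row $i$ contains $\min(m,n-i)+\max(0,i-m-1)$ entries equal to $+1$ among its $n-1$ off-diagonal entries. Writing $n=2m+2$ and using $T_i=2p_i-(n-1)$ with $p_i$ this count, one finds
\[
T_i=\begin{cases}-1 & 1\le i\le m+1,\\[1mm] \ph 1 & m+2\le i\le n.\end{cases}
\]
This is precisely where the even--odd distinction surfaces: for odd $n$ the very same count returns $0$, recovering Lemma~\ref{Lem_tau}.

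Next I would compute $S_i$. The key observation is that $n\equiv 0\pmod 4$ forces $m+2=n/2+1$ to be \emph{odd}, so the prescription for $\sigma_{ij}$ is nothing but the parity pattern $(-1)^{i+j}$ corrected on the single anti-diagonal $i+j=m+2$; that is, $\sigma_{ij}=(-1)^{i+j}+2\,\chi(i+j=m+2)$ (on that anti-diagonal the base value $-1$ is flipped to $+1$). Consequently
\[
S_i=\sum_{\substack{j=1\\ j\neq i}}^n(-1)^{i+j}
+2\,\bigl\lvert\{\,j\neq i:\ i+j=m+2\,\}\bigr\rvert .
\]
Because $n$ is even the first sum equals $-1$ (it is $0$ over all $j$, minus the $j=i$ term $+1$), while the second contributes $+2$ exactly when $m+2-i\in\{1,\dots,n\}\setminus\{i\}$, i.e. exactly for $1\le i\le m+1$ (the equality $m+2-i=i$ is impossible since $m+2$ is odd). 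Hence $S_i=+1$ for $i\le m+1$ and $S_i=-1$ for $i\ge m+2$, so $S_i=-T_i$ and $S_i+T_i=0$, which is \eqref{Eq_sigmatau_condition}.

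The step I expect to require the most care is the bookkeeping at the transition $i=m+2$, where both $T_i$ and the defect count for $S_i$ change value; one must confirm that the correction at $i+j=m+2$ sits on an odd anti-diagonal (guaranteed by $n\equiv 0\pmod 4$) so that it truly turns a $-1$ into a $+1$ and delivers the needed $+2$. The remaining ingredients—the Toeplitz identity $\sigma_{i+1,j+1}=\sigma_{ij}$ and the two elementary counts above—are routine.
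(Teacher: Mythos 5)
Your proof is correct and follows essentially the same route as the paper: the paper likewise splits the left-hand side of \eqref{Eq_sigmatau_condition} into the $\tau$ row sum (equal to $-1$ for $i\leq m+1$ and $+1$ otherwise, obtained there as ``a simple modification of Lemma~\ref{Lem_tau}'') and the $\sigma$ sum (base value $-1$ from the parity count of $n-1$ terms, corrected to $+1$ by the anti-diagonal $i+j=m+2$ exactly when $i\leq m+1$). Your write-up merely makes explicit two details the paper compresses---the entry count behind the even-$n$ $\tau$ row sums and the observation that $n\equiv 0\pmod 4$ forces $m+2$ to be odd so the defect genuinely flips a $-1$ to a $+1$---which is a faithful filling-in rather than a different argument.
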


\begin{proof}
By a simple modification of Lemma~\ref{Lem_tau} it follows that for $n$ even
and $m=(n-2)/2$,
\[
\sum_{j=i+1}^n\tau_{ij}-\sum_{j=1}^{i-1} \tau_{ji}=
\begin{cases}
-1 & \text{if $1\leq i\leq m+1$}, \\
\ph 1 & \text{if $m+1<i<n$}.
\end{cases}
\]
Hence we must show that
\[
\sum_{j=i+1}^n\sigma_{ij}+\sum_{j=1}^{i-1}\sigma_{ji}=
\begin{cases}
\ph 1 & \text{if $1\leq i\leq m+1$}, \\
-1 & \text{if $m+1<i<n$}.
\end{cases}
\]
But this is obvious. The sum on the left is over $n-1$ terms, with
one more odd $i+j$ then even $i+j$. Hence, without the exceptional
condition on $i+j=m+2$, the sum would always be $-1$. To have $i+j=m+2$
as part of one of the two sums we must have $i\leq m+1$, in which case one
$-1$ is changed to a $+1$ leading to a sum of $+1$ instead of $-1$.
\end{proof}

Lemmas~\ref{Lem_sigmatau} and \ref{Lem_sigmatau_II} backed up with
numerical data for $n=4$ and $n=5$ suggest the following
generalisation of~\eqref{Eq_BCn_II}.

\begin{conjecture}[\textbf{Complex $\boldsymbol{\mathrm{BC}_n}$ constant term identity}]
\label{Conj_CT_BCn_complex}
Let $n\equiv \zeta\pmod{4}$ where $\zeta=0,1$, and let
$u\in\Complex$ such
that $\min\{\Re(1+2b+(n-1)u),\Re(1+\tfrac{1}{2}nu)\}>0$.
Assume that $\tau_{ij}$ and $\sigma_{ij}$ for
$1\leq i<j\leq n$ are signatures satisfying
\eqref{Eq_sigmatau_condition}.
Then there exists a polynomial $P_n(x)$, independent of $a$ and $b$, such that $P_n(1)=1$ and
\begin{align}\label{Eq_BCn-complex}
\CT&\bigg[\,\prod_{i=1}^n (1-x_i^{\pm})^a(1-x_i^{\pm 2})^b
\prod_{1\leq i<j\leq n}
\Big(1-\big(x_ix_j\big)^{\sigma_{ij}}\Big)^u
\Big(1-\Big(\frac{x_i}{x_j}\Big)^{\tau_{ij}}\Big)^u
\bigg] \\
&=x^{n-\zeta} P_n(x^2)\,
\frac{\Gamma(1+\tfrac{1}{2}nu)}
{\Gamma(1+\frac{1}{2}(n-1)u)\Gamma^n(1+\tfrac{1}{2}u)}
\prod_{i=1}^{n-1}
\frac{\Gamma(1+iu)}{\Gamma(1+(i-\frac{1}{2})u)}\notag \\
& \qquad \quad \times
\prod_{i=0}^{n-1}
\frac{(\tfrac{1}{2}+\tfrac{1}{2}iu)_{a+b}(\tfrac{1}{2}+\tfrac{1}{2}iu)_b}
{(1+\tfrac{1}{2}(n+i-1)u)_{a+2b}}
\notag
\end{align}
where $x=x(u):=\cos\big(\frac{1}{2}\pi u\big)$.
Trivially, $P_1(x)=1$.
Conjecturally,
\[
P_4(x)=1 \quad\text{and}\quad
P_5(x)=\frac{1}{15}(3+4x+8x^2).
\]
\end{conjecture}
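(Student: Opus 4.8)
The plan is to mimic the route that succeeded for $\mathrm{A}_2$ in Proposition~\ref{Prop_CT_A2_complex} and for $\mathrm{G}_2$ in Theorem~\ref{Thm_CT_G2_complex}: reduce the generic-$u$ identity to a base case by recurrences in the integer parameters $a$ and $b$, and then attack that base case directly. First I would use the rewriting \eqref{Eq_sigmatau}, guaranteed by Lemma~\ref{Lem_sigmatau} or Lemma~\ref{Lem_sigmatau_II}, to put the kernel in the manifestly symmetric shape appearing in \eqref{Eq_BCn_II}, and expand every factor via the binomial theorem \eqref{Eq_binomial_thm} into a multiple combinatorial sum. As an initial consistency check, and to calibrate the unknown polynomial, I would verify that at $u=2k$ (where $x=\cos(\pi k)=(-1)^k$, so that $x^{n-\zeta}P_n(x^2)=(-1)^{k(n-\zeta)}$ once $P_n(1)=1$) the proposed right-hand side collapses to Macdonald's evaluation \eqref{Eq_BCn}; dually, at odd $u$ the factor $x^{n-\zeta}$ forces the right-hand side to vanish, which must be matched by a skew-symmetry of the kernel analogous to the $\mathrm{A}_n$ situation discussed below \eqref{Eq_complex_Morris}.

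The core of the argument would be to establish recurrences in $a$ and in $b$. Following the strengthening in Section~\ref{Sec_New}, I would introduce the elementary-symmetric-weighted constant terms $f_r(a)=\CT[(-1)^r e_r(X)\,K(X)]$, where $K(X)$ is the kernel, seek a two-term recurrence relating $f_r(a)$ and $f_{r+1}(a)$ of the type \eqref{Eq_rec_AM}, sum over $r$ using the generating function \eqref{Eq_e_gen} to obtain $\sum_r f_r(a)=f_0(a+1)$, and solve the resulting functional equation by a Chu--Vandermonde summation. An analogous device in $b$, using the long-root factors $(1-x_i^{\pm2})^b$, would then peel off the second parameter and reduce everything to the case $a=b=0$, i.e.\ a complex $\mathrm{D}_n$ Dyson-type identity in which only $x^{n-\zeta}P_n(x^2)$ and the pure Gamma-quotient survive.

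The remaining $a=b=0$ evaluation is where the real difficulty sits, and I expect it to be the main obstacle. For generic complex $u$ the kernel is \emph{not} skew-symmetric, so the Aomoto/Adamovi\'c--Milas style derivation of a recurrence like \eqref{Eq_rec_AM}, which leans on skew-symmetry, does not transfer verbatim---exactly the difficulty flagged at the end of Section~\ref{Sec_New}. Moreover the polynomial $P_n(x)$ has no known closed form (only $P_1=P_4=1$ and $P_5=\tfrac{1}{15}(3+4x+8x^2)$ are recorded), so even the shape of the answer is conjectural; pinning it down would require either an explicit Pfaffian or determinant evaluation generalising the Weyl-denominator computation that trivialises the $k=1$ Macdonald case, or a genuine complex analogue of the Selberg integral underlying \eqref{Eq_BCn}. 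Neither is presently available, and the multivariable creative telescoping that settled $n=3$ in type $\mathrm{A}$ does not scale to a \emph{uniform} proof in $n$. For these reasons I would expect, at best, to push the creative-telescoping attack through for the small cases $n=4$ and $n=5$, leaving the general statement as the conjecture it is recorded to be.
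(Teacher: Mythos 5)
You were asked to prove a statement that the paper itself records only as a conjecture: the paper contains no proof of \eqref{Eq_BCn-complex}, and its support consists solely of the necessity argument for $n\equiv 0,1\pmod 4$ (summing \eqref{Eq_sigmatau_condition} over $i$), the explicit signature constructions of Lemmas~\ref{Lem_sigmatau} and~\ref{Lem_sigmatau_II} which make the rewriting \eqref{Eq_BCn_II} possible, and numerical data---even $P_4(x)=1$ and $P_5(x)=\tfrac{1}{15}(3+4x+8x^2)$ are labelled conjectural. Your assessment is therefore essentially the correct one, and your programme tracks the paper's own heuristics: the specialisation $u=2k$ does recover Macdonald's \eqref{Eq_BCn} (note that your sign $(-1)^{k(n-\zeta)}$ is automatically $1$ because $4$ divides $n-\zeta$, consistent with the absence of any sign in \eqref{Eq_BCn_II}, in contrast to the factor $(-1)^{km}$ in \eqref{Eq_CT_Morris_tau}); and you correctly identified the obstruction the authors themselves flag at the end of Section~\ref{Sec_New}, namely that the Aomoto/Adamovi\'c--Milas recurrence \eqref{Eq_rec_AM} leans on skew-symmetry of the kernel, which is unavailable at generic complex $u$.

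Two caveats on your plan. First, the conjecture asserts the evaluation for \emph{every} choice of signatures satisfying \eqref{Eq_sigmatau_condition}, not merely the canonical ones from the lemmas; your calibration step says nothing about this independence of choice, which would itself need an argument (in type $\mathrm{A}$ the analogous freedom is absorbed by part (1) of Lemma~\ref{Lem_CL}, but that again uses skew-symmetry). Second, your expectation that creative telescoping would at least dispatch $n=4$ and $n=5$ is optimistic and untested: the paper explicitly leaves even the $\mathrm{D}_4$ case ($a=b=0$, $n=4$) open, restating it as a new hypergeometric multisum identity with four linear constraints, so these ``small cases'' are not settled anywhere, and the $b$-reduction you sketch would also need a new insertion device, since the Section~\ref{Sec_New} machinery with $e_r(X)$ shifts $a$ only. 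With those qualifications, your conclusion---that the statement must remain a conjecture---is exactly the status it has in the paper.
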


We note that the $\mathrm{D}_4$ case of the conjecture, i.e.,
$a=b=0$ and $n=4$, is equivalent to the following new
hypergeometric multisum identity
\[
\sum
\prod_{1\leq i<j\leq 4}(-1)^{k_{ij}}\binom{u}{k_{ij}}
\binom{u}{m_{ij}}
=\cos^4(\tfrac{1}{2}\pi u)
\frac{\Gamma(1+u)\Gamma^2(1+2u)\Gamma(1+3u)}
{\Gamma^5(1+\tfrac{1}{2}u)\Gamma^2(1+\tfrac{3}{2}u)\Gamma(1+\tfrac{5}{2}u)},
\]
where the sum is over $\{k_{ij}\}_{1\leq i<j\leq 4}$ and
$\{m_{ij}\}_{1\leq i<j\leq 4}$ subject to the constraints
\begin{align*}
k_{12}-k_{13}-k_{14}+m_{12}+m_{13}-m_{14}&=0, \\
k_{12}-k_{23}+k_{24}-m_{12}+m_{23}-m_{24}&=0, \\
k_{13}-k_{23}+k_{34}+m_{13}-m_{23}-m_{34}&=0, \\
k_{14}+k_{24}-k_{34}-m_{14}+m_{24}-m_{34}&=0,
\end{align*}
or, equivalently,
\[
\sum_{\substack{1\leq i<j\leq 4 \\[1pt] i=p \text{ or } j=p}}
(\tau_{ij}k_{ij}+\sigma_{ij}m_{ij})=0 \qquad \text{for $1\leq p\leq 4$}.
\]

Unfortunately, from the point of view of logarithmic constant term
identities, \eqref{Eq_BCn-complex} is not good news.
On the right-hand side the exponent $n-\zeta$ of $x$ is too high relative
to the rank $n$ of the root system. (Compare with $m=(n-1)/2$ versus
$n-1$ for $\mathrm{A}_{n-1}$.)
If we write \eqref{Eq_BCn-complex} as $L_n(u)=R_n(u)$ and define $K:=2k+1$,
then due to the factor $x^{n-\zeta}$,
$R_n^{(j)}(K)=0$ for all $1\leq j<n-\zeta$. Much like the Morris case,
$R_n^{(n-\zeta)}(K)$ yields a ratio of products of double factorials:
\begin{multline*}
R_n^{(n-\zeta)}(K)=
(n-\zeta)! P_n(0) \frac{(nK)!!}{((n-1)K)!!(K!!)^n}
\prod_{i=1}^{n-1} \frac{(2iK)!!}{((2i-1)K)!!} \\ \times
\prod_{i=0}^{n-1} \frac{(2b+iK-1)!!(2a+2b+iK-1)!!((n+i-1)K)!!}
{(2a+4b+(n+i-1)K)!!(iK-1)!!(iK-1)!!}.
\end{multline*}
However, if we differentiate $L_n(u)$ as many as $n-\zeta$ times,
a large number of different types of logarithmic terms give a
nonvanishing contribution to $L_n^{(n-\zeta)}(K)$---unlike
type $\mathrm{A}$ where only terms with the same functional form
(corresponding to perfect matchings) survive the specialisation $u=K$.
For example, for $n=4$ terms such as
\begin{gather*}
\log^3\Big(1-\frac{x_1}{x_2}\Big)\log\Big(1-\frac{1}{x_2x_3}\Big), \\
\log^2\Big(1-\frac{x_1}{x_2}\Big)
\log(1-x_1x_2)\log\Big(1-\frac{1}{x_2x_3}\Big), \\
\log^2\Big(1-\frac{x_1}{x_2}\Big)\log^2(1-x_3x_4),
\end{gather*}
and many similar such terms, all give nonvanishing contributions.

\subsection*{Acknowledgements}
The authors have greatly benefited from Tony Guttmann's and Vivien Challis'
expertise in numerical computations.
The authors also thank Antun Milas and
the anonymous referee for very helpful remarks, leading
to the inclusion of Section~\ref{Sec_New}.

\bibliographystyle{amsplain}

\end{document}